\newcolumntype{C}[1]{>{\centering\arraybackslash}m{#1}}
\newtheorem{theorem}{Theorem}
\newtheorem{lemma}{Lemma}
\newtheorem{corollary}{Corollary}
\numberwithin{equation}{section}
\newtheorem{remark}{Remark}
\newcommand{\bfs}[1]{{\boldsymbol #1}}
\journal{arXiv}
\begin{document}

\begin{frontmatter}

\title{SoftIGA: soft isogeometric analysis}
%

\author[qd]{Quanling Deng\corref{corr}}
\author[pb]{Pouria Behnoudfar}
\author[vc]{Victor M. Calo}

\cortext[corr]{Corresponding author. \\ E-mail addresses: Quanling.Deng@anu.edu.au; Pouria.Behnoudfar@csiro.au; Victor.Calo@curtin.edu.au}

\address[qd]{School of Computing, Australian National University, Canberra, ACT 2601, Australia. }

\address[pb]{Mineral Resources, Commonwealth Scientific and Industrial Research Organisation (CSIRO), Kensington, Perth, WA 6152, Australia.}

\address[vc]{School of Electrical Engineering, Computing and Mathematical Sciences, Curtin University, Perth, WA 6102, Australia. }

\begin{abstract}
We extend the softFEM idea to isogeometric analysis (IGA) to reduce the stiffness (consequently, the condition numbers) of the IGA discretized problem. We refer to the resulting approximation technique as softIGA. We obtain the resulting discretization by first removing the IGA spectral outliers to reduce the system's stiffness. We then add high-order derivative-jump penalization terms (with negative penalty parameters) to the standard IGA bilinear forms. 
The penalty parameter seeks to minimize spectral/dispersion errors while maintaining the coercivity of the bilinear form. 
We establish dispersion errors for both outlier-free IGA (OF-IGA) and softIGA elements.
We also derive analytical eigenpairs for the resulting matrix eigenvalue problems and show that the stiffness and condition numbers of the IGA systems significantly improve (reduce). We prove a superconvergent result of order $h^{2p+2}$ for eigenvalues where $h$ characterizes the mesh size and $p$ specifies the order of the B-spline basis functions. 
To illustrate the main idea and derive the analytical results, we focus on uniform meshes in 1D and tensor-product meshes in multiple dimensions.
For the eigenfunctions, softIGA delivers the same optimal convergence rates as the standard IGA approximation.  Various numerical examples 
demonstrate the advantages of softIGA over IGA.

  \textbf{Mathematics Subjects Classification}: 65N15, 65N30, 65N35, 35J05 \end{abstract}
%

\begin{keyword} 
Spectral approximation \sep isogeometric analysis \sep eigenvalue \sep stiffness \sep high-order derivative \sep jump penalty
\end{keyword}

\end{frontmatter}


\section{Introduction} \label{sec:intr} 

Isogeometric analysis (IGA), introduced in 2005~\cite{ hughes2005isogeometric, cottrell2009isogeometric}, is a widely-used analysis tool in modeling and simulation due to its integration of the classical finite element analysis with computer-aided-design technologies.  There is vast literature; see an overview paper~\cite{ nguyen2015isogeometric} and the references therein.  In particular, a rich literature on IGA demonstrates that IGA outperforms classical finite elements in various scenarios, especially on the spectral approximation of the second-order elliptic operators.  The elliptic eigenvalue problem arises in many applications in science and engineering.  For example, when simulating the eigenmodes of structural vibrations, the work~\cite{ cottrell2006isogeometric} showed that the IGA approximated eigenenergy errors were significantly smaller compared with finite element ones.  In~\cite{ hughes2014finite}, the authors explored the further advantages of IGA on spectral approximations.

There are mainly two lines of work to further improve the IGA spectral approximation of elliptic operators: spectral error reduction in the lower-frequency region and outlier elimination.  The recent work~\cite{ calo2019dispersion, puzyrev2017dispersion} introduced optimally-blended quadrature rules that combine the Gauss-Legendre and Gauss-Lobatto rules.  In the lower-frequency region, the authors demonstrate superconvergence with two extra orders on eigenvalue errors by invoking the unified dispersion and spectral analysis in~\cite{ hughes2008duality}.  The work~\cite{ deng2018ddm} further generalized the optimally-blended rules to arbitrary $p$-th order IGA with maximal continuity.  Along this line, the work~\cite{ deng2018dispersion} studied the methods' computational efficiency, and the work~\cite{ bartovn2018generalization, calo2017quadrature, puzyrev2018spectral, deng2019optimal, deng2018isogeometric} studied their applications.

On the other hand, the outliers in IGA spectral approximations were first observed in~\cite{ cottrell2006isogeometric} in 2006.  In general, the spectral errors in the higher-frequency region are much larger than those in the lower-frequency region.  For higher-order IGA elements, there is a thin layer in the highest-frequency region where the spectral errors are significantly larger than their neighboring ones.  These approximated eigenvalues are referred to as ``outliers."  The work~\cite{ cottrell2006isogeometric, hughes2008duality} tried to remove these outliers by using a nonlinear parametrization of the geometry and meshes.  However, the advantage of using nonlinear parametrization to remove outliers is limited.

The question of how to efficiently eliminate these outliers remained open until recently.  The work~\cite{ hiemstra2021removal} proposed reconstructing the approximation space by imposing extra conditions on the boundary.  The authors studied the method numerically for the second and fourth-order problems with various boundary conditions in one, two, and three dimensions.  The work~\cite{ manni2022application} focuses on the approximation properties of the method and established optimal error estimates for both eigenvalue and eigenfunctions.  In these works, the extra boundary conditions are imposed strongly in the approximation space.  Alternatively, these conditions can be imposed in a weak sense~\cite{ Bazilevs:2007, Bazilevs:2008}.  The work~\cite{ deng2021boundary} removed the outliers by adding to the bilinear form a boundary penalization term.  With optimally-blended quadratures, the work~\cite{ deng2021outlier} removed the outliers in the higher-frequency region as well as reduced the spectral errors (with superconvergent errors) in the lower-frequency region.  All these methods eliminated the outliers, where the key idea was to impose higher-order consistency conditions on the boundaries in the IGA approximations.
These methods are referred to outlier-free IGA (OF-IGA).

As a consequence of the outlier elimination, the stiffness and condition numbers of the discretized system improve~\cite{ deng2021outlier}.  This paper proposes further reducing the stiffness and condition numbers by subtracting a high-order derivative-jump penalization term at the internal element interfaces.  This method extends the main idea of softFEM developed in~\cite{ deng2021softfem} to the IGA setting.  We refer to the corresponding analysis as softIGA.  For $p$-th order IGA elements with maximal continuity, the basis functions are $C^{p-1}$-continuous.  The jumps appear when taking the basis functions' $p$-th order partial derivatives.  We thus penalize this $p$-th order derivative-jump and subtract from the outlier-free IGA bilinear form~\cite{ hiemstra2021removal, manni2022application, deng2021outlier} an inner product of the derivative-jumps of the basis functions in both trial and test spaces. 
 The jump terms are scaled by a softness parameter $\eta$. We show that the softIGA bilinear form is coercive for $\eta \in [0, \eta_{\max})$ where $\eta_{\max}$ depends on $p$ and mesh configuration. 
 With the coercivity and boundedness of the jump terms in mind, one expects optimal eigenvalue and eigenfunction error convergence. 
In particular, in a 1D setting with uniform elements for $p=2,3,4$, we derive the analytical eigenvalues and eigenvectors for the resulting matrix eigenvalue problems. 
The eigenvalue errors are optimal and we give exact constants that bound the errors for each eigenvalue.
Lastly, we derive superconvergent eigenvalue errors of orders $2p+2$ when using a particular choice of the softness parameter.
We observe a superconvergence of order $2p+4$ when we also add the penalized jump bilinear term to the mass bilinear form.

SoftIGA requires the evaluation and implementation of high-order derivative jumps in existing IGA codes.
This implementation is straightforward by using the Cox-de Boor recursive formula with repeated nodes.
With this simple code extension, we observe stiffness and condition number reduction on the resulting matrices.
This work is our first study on softIGA towards reducing the spectral error and stiffness for general $C^k$ $p$-th order IGA elements. 
For $C^1$ quadratic IGA elements, there is no outlier in the spectra and softIGA reduces the condition number by about 50\%, which is similar than the softFEM case.
For $C^{p-1}$ $p$-th ($p>2$) order OF-IGA elements, the stiffness and error reduction are decreasing with respect to $p$ as the element continuity increases.
They are not as much as the case in the softFEM setting as there are no optical branches in OF-IGA spectra. 
For general $C^k, k<p-1,$ $p$-th order IGA elements, there are optical branches in the spectra and we expect larger reductions. 
For condition number estimates, we derive analytical eigenpairs for OF-IGA and softIGA with $p=2,3,4$ and uniform 1D elements. 
From the exact eigenpairs, we observe that 
softIGA eigenvalues errors are smaller in lower-frequency region and larger in the high-frequency region.
The eigenvectors are the same (independent of $\eta$), thus the eigenfunctions are of the same errors in both $H^1$ and $L^2$ norm.
As a by-product, we establish dispersion errors for softIGA elements where OF-IGA is a special case when $\eta=0$ in softIGA.

The rest of this paper is organized as follows.  Section~\ref{sec:iga0} first presents the eigenvalue problem followed by its discretization using the standard isogeometric analysis.  We then describe the reconstructed B-spline basis function and present the recently-developed outlier-free IGA (OF-IGA).
Section~\ref{sec:softiga} introduces softIGA followed by its study on the choice of the softness parameter and establishes the coercivity of the new bilinear form.  In Section~\ref{sec:sr}, we focus on the Laplacian eigenvalue problem in 1D and derive analytical eigenpairs for the softIGA resulting matrix systems.
We perform the dispersion error analysis in Section~\ref{sec:de}.  Section~\ref{sec:num} collects numerical results demonstrating the proposed method's performance.  Concluding remarks are presented in Section~\ref{sec:conclusion}.

\section{The standard and outlier-free isogeometric analysis} \label{sec:iga0}

In this section, we discuss the eigenvalue problem and its variational formulation. We then present the standard IGA discretization followed by the description of the outlier-free (OF) IGA~\cite{ hiemstra2021removal, manni2022application}. The key idea for outlier elimination is to reconstruct the B-spline space such that the functions in the test and trial spaces satisfy extra boundary conditions. We will adopt the reconstructed outlier-free approximation space and introduce the soft isogeometric analysis in the next section.

\subsection{Problem statement} \label{sec:ps}

We begin our introduction of softIGA with the Laplacian eigenvalue problem posed on the domain $\Omega = [0,1]^d \subset \mathbb{R}^d, d=1,2,3$ with Lipschitz boundary $\partial \Omega$: Find the eigenpairs $(\lambda, u)\in \mathbb{R}^+\times H^1_0(\Omega)$ with $\|u\|_\Omega=1$ such that
\begin{equation} \label{eq:pde}
  \begin{aligned}
    - \Delta u & =  \lambda u \quad &&  \text{in} \quad \Omega, \\
    u & = 0 \quad && \text{on} \quad  \partial \Omega,
  \end{aligned}
\end{equation}
where $\Delta = \nabla^2$ is the Laplacian. 
We adopt the standard notation for the Hilbert and Sobolev spaces.  In particular, we denote, for a measurable subset $S\subseteq \Omega$, by $(\cdot,\cdot)_S$ and $\| \cdot \|_S$ the $L^2$-inner product and its norm, respectively.  For an integer $m\ge1$, let $\| \cdot \|_{H^m(S)}$ and $| \cdot |_{H^m(S)}$ denote the $H^m$-norm and $H^m$-seminorm, respectively.  Let $H^1_0(\Omega)$ be the Sobolev space with functions in $H^1(\Omega)$ that are vanishing at the boundary.

The variational formulation of~\eqref{eq:pde} at the continuous level is to find the eigenvalue $\lambda \in \mathbb{R}^{+}$ and the associated eigenfunction $u \in H^1_0(\Omega)$ with $\|u\|_\Omega=1$ such that
\begin{equation} \label{eq:vf}
  a(w, u) =  \lambda b(w, u), \quad \forall \ w \in H^1_0(\Omega), 
\end{equation}
where the bilinear forms are
\begin{equation}
  a(v,w) := (\nabla v, \nabla w)_\Omega,
  \qquad
  b(v,w) := (v,w)_\Omega.
\end{equation}

The eigenvalue problem~\eqref{eq:vf} is equivalent to the original problem~\eqref{eq:pde}.  They have a countable set of positive eigenvalues (see, for example, \cite[Sec. 9.8]{Brezis:11})
\begin{equation*}
  0 < \lambda_1 < \lambda_2 \leq \lambda_3 \leq \cdots
\end{equation*}
and an associated set of orthonormal eigenfunctions $\{ u_j\}_{j=1}^\infty$, meaning, $ (u_j, u_k) = \delta_{jk}, $ where $\delta_{jk} =1$ is the Kronecker delta.  Since there holds $ a(u_j, u_k) = \lambda_j b(u_j, u_k) = \lambda_j \delta_{jk}, $ the eigenfunctions are also orthogonal in the energy inner product.  Throughout the paper, we always sort the eigenvalues, paired with their corresponding eigenfunctions, in ascending order and counted with their order of algebraic multiplicity.

\subsection{Isogeometric analysis (IGA)} \label{sec:iga}

Standard IGA adopts the Galerkin finite element analysis framework at the discrete level. We first discretize the rectangular domain $\Omega = [0,1]^d, d=1,2,3$ with a tensor-product mesh.  Let $E$ and $\mathcal{T}_h$ denote a general element and its collection, respectively, such that $\overline \Omega = \cup_{E\in \mathcal{T}_h} E$.  Also, let $h_E = \text{diameter}(E)$ and $h = \max_{E \in \mathcal{T}_h} h_E$.  We now use the B-splines as basis functions for simplicity, which we construct using the Cox-de Boor recursive formula in 1D (see, for example,~\cite{ de1978practical, piegl2012nurbs})
\begin{equation} \label{eq:Bspline}
  \begin{aligned}
    \phi^j_0(x) & = 
    \begin{cases}
      1, \quad \text{if} \ x_j \le x < x_{j+1}, \\
      0, \quad \text{otherwise}, \\
    \end{cases} \\ 
    \phi^j_p(x) & = \frac{x - x_j}{x_{j+p} - x_j} \phi^j_{p-1}(x) + \frac{x_{j+p+1} - x}{x_{j+p+1} - x_{j+1}} \phi^{j+1}_{p-1}(x),
  \end{aligned}
\end{equation}
where $\phi^j_p(x)$ is the $j$-th B-spline basis function of degree $p$.  Herein, the knot vector is $X = \{0=x_0,\cdots, x_0, x_1, x_2, \cdots, x_N, \cdots x_N = 1\}$ which has a non-decreasing sequence of real numbers $x_j$.
For $C^{p-1}$ $p$-th order B-splines, the left and right boundary nodes are repeated $p$ times.  The multi-dimensional basis functions construction uses tensor products of these one-dimensional functions.  
We refer to \cite{ de1978practical}
for details on this construction.

Taking the boundary condition into consideration, the usual IGA approximation space (see also \cite{hughes2005isogeometric}) associated with the knot vector $X$ is $V^h_p = \text{span} \{\phi^j_p(x)\}_{j \in I_h} \subset H^1_0(\Omega)$.  Herein, $I_h$ is an index set such that the associated basis functions vanish at the boundary. Lastly, throughout the paper, we focus on the IGA approximation spaces with B-splines of maximal continuity.

The isogeometric analysis (IGA) of~\eqref{eq:pde} or equivalently~\eqref{eq:vf} seeks an eigenvalue $\lambda^h \in \mathbb{R}^+$ and its associated eigenfunction $u^h \in V^h_p$ with $\| u^h \|_\Omega = 1$ such that
\begin{equation} \label{eq:vfh}
  a(w^h, u^h) =  \lambda^h b(w^h, u^h), \quad \forall \ w^h \in V^h_p.
\end{equation} 
At the algebraic level, an eigenfunction is a linear combination of the B-spline basis functions.  Substituting all the B-spline basis functions for $w^h$ in~\eqref{eq:vfh} leads to the generalized matrix eigenvalue problem (GMEVP)
 \begin{equation} \label{eq:mevp}
   KU = \lambda^h MU,
\end{equation}
where $K_{kl} = a(\phi_p^l, \phi_p^k), M_{kl} = b(\phi_p^l, \phi_p^k),$ and $U$ is the eigenvector representing the coefficients of the B-spline basis functions.  Once the matrix eigenvalue problem~\eqref{eq:mevp} is solved, we sort the eigenpairs $(\lambda^h_j, U_j), j\in I_h$ in ascending order such that the high-frequency eigenfunctions have larger indices.

\subsection{Outlier-free isogeometric analysis (OF-IGA)}
\label{eq:ofiga}

The outliers, first observed in~\cite{ cottrell2006isogeometric} for high-order elements, are the approximate eigenpairs in the highest-frequency region where the eigenvalues are much larger than the exact values. The eigenvalue errors of the outliers are substantially larger than those in the lower-frequency region. Recently, these outliers have been effectively removed by including extra accuracy at the boundaries~\cite{ hiemstra2021removal, manni2022application, deng2021boundary}. In particular,~\cite{ hiemstra2021removal, manni2022application} impose the extra conditions strongly through the re-construction of the B-spline basis functions near the boundary, which leads to a smaller approximation space while~\cite{ deng2021boundary} imposes extra condition weakly without space re-construction. The work~\cite{ hiemstra2021removal} studied the method numerically for both the second- and fourth-order problems with various boundary conditions in one, two, and three dimensions. The work~\cite{ manni2022application} focuses on the approximation properties of the method and establishes optimal error estimates for both eigenvalue and eigenfunctions. For the development of softIGA, we adopt the strong basis-function re-construction in~\cite{ hiemstra2021removal, manni2022application}.

\begin{figure}[h!]
\centering
\includegraphics[height=7cm]{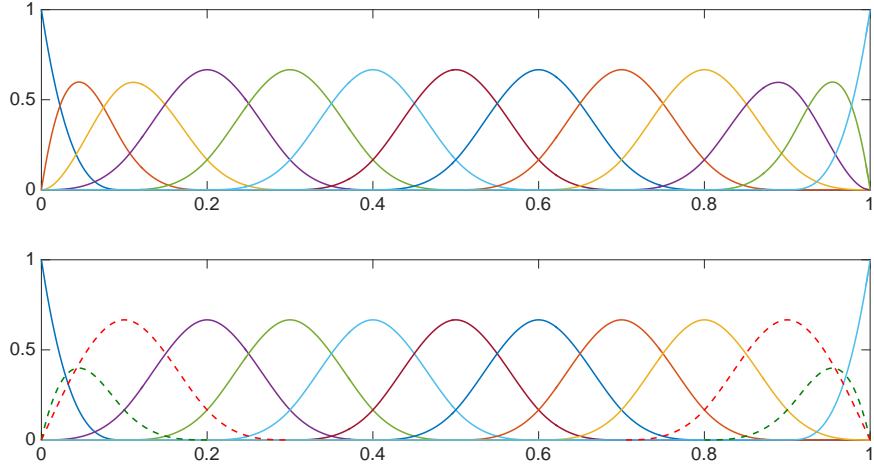} 
\caption{Original (top) and new (bottom) $C^2$-cubic B-splines with $N=10$ uniform elements. The $C^2$-cubic OF-IGA space with 10 uniform elements is spanned by the 3rd-to-11th basis functions shown at the bottom (the dashed lines indicate the new basis functions that are different from standard IGA).}
\label{fig:c2p3n10}
\end{figure}

\begin{figure}[h!]
\centering
\includegraphics[height=8cm]{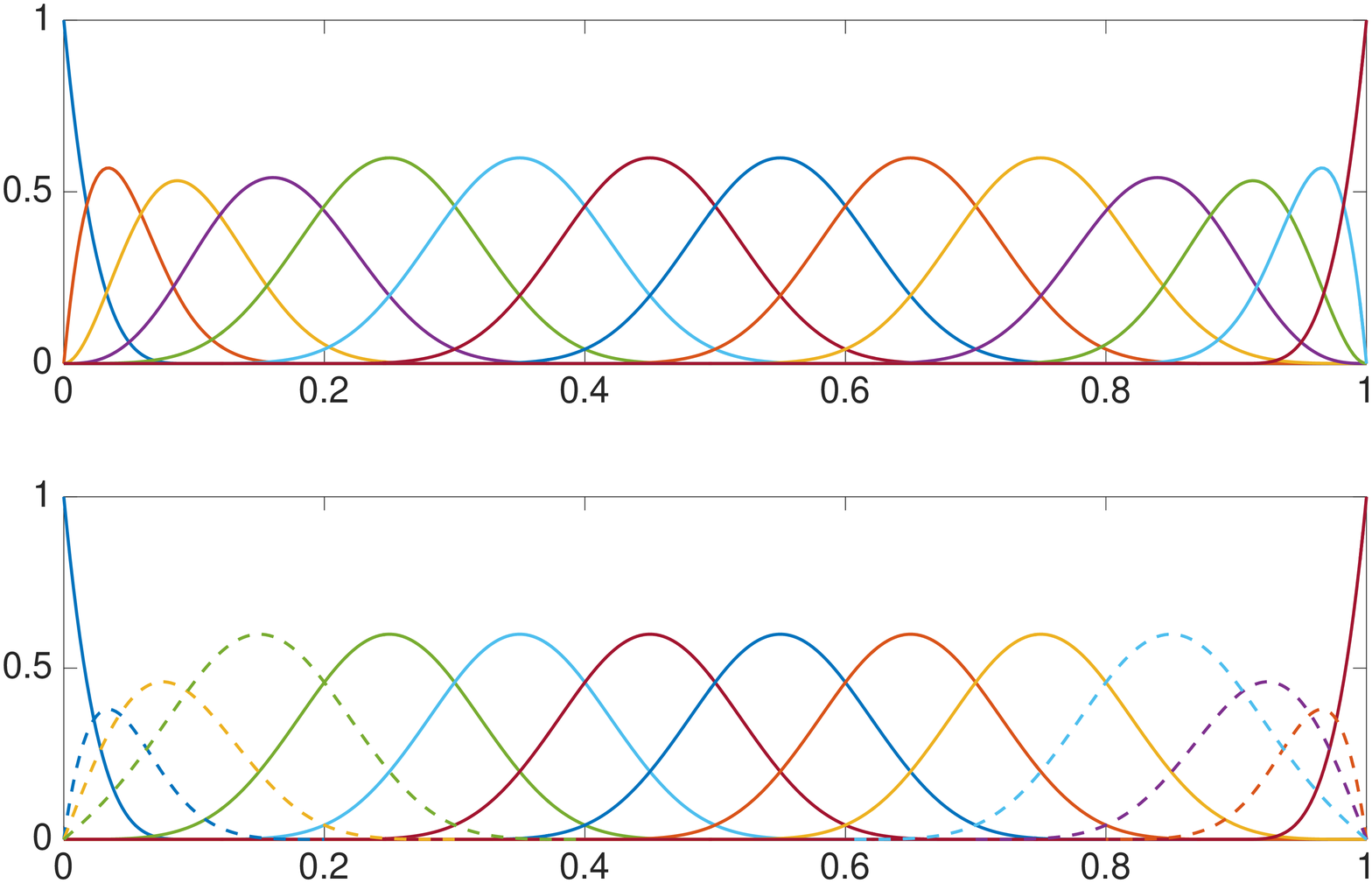} 
\caption{Original (top) and new (bottom) $C^3$-quartic B-splines with $N=10$ uniform elements. The $C^3$-quartic OF-IGA space with 10 uniform elements is spanned by the 3rd-to-12th basis functions shown at the bottom (the dashed lines indicate the new basis functions that are different from standard IGA).}
\label{fig:c3p4n10}
\end{figure}

\begin{figure}[h!]
\centering
\includegraphics[height=8cm]{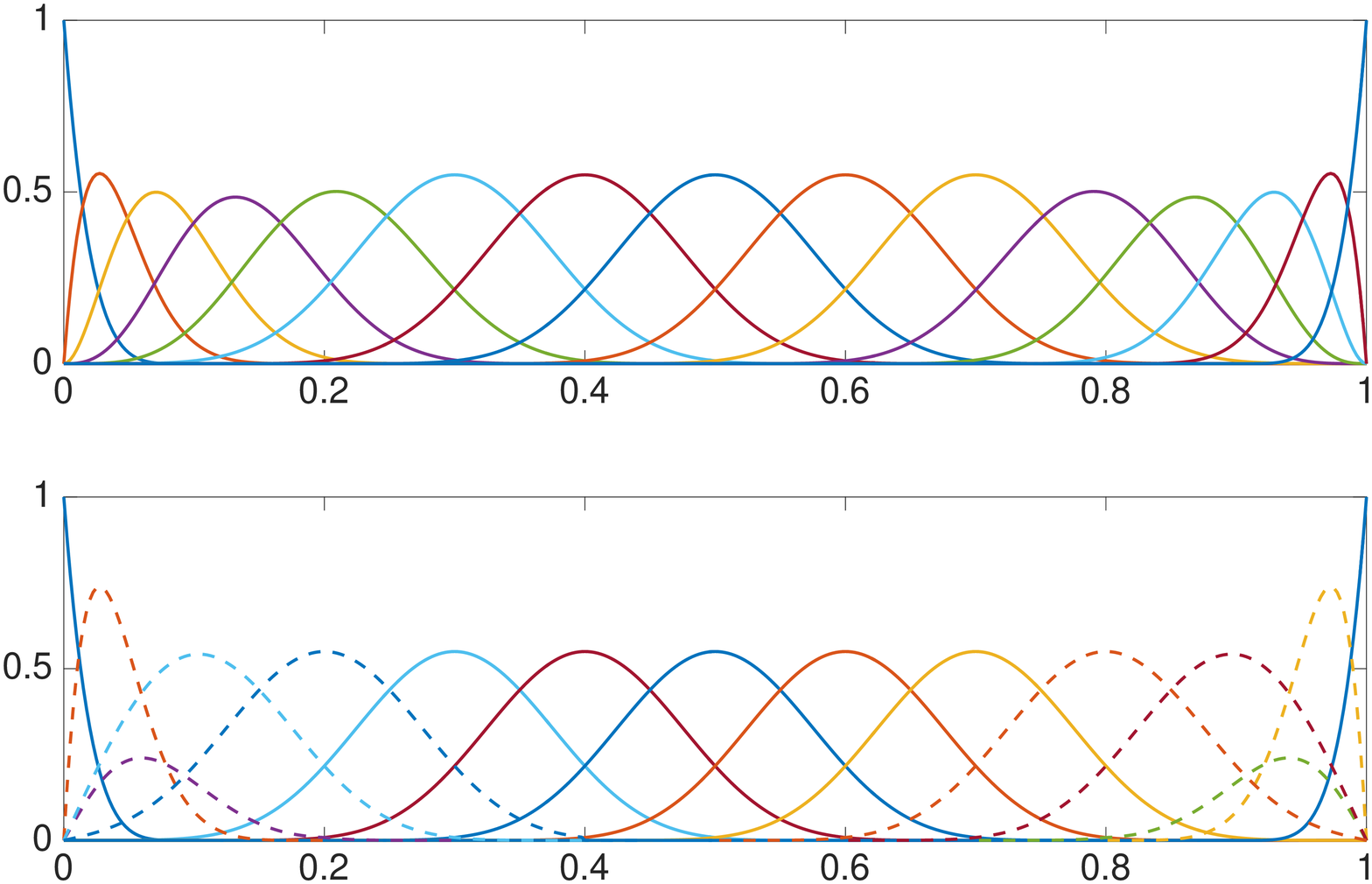} 
\caption{Original (top) and new (bottom) $C^4$-quintic B-splines with $N=10$ uniform elements. The $C^4$-quintic OF-IGA space with 10 uniform elements is spanned by the 4th-to-12th basis functions shown at the bottom (the dashed lines indicate the new basis functions that are different from standard IGA).}
\label{fig:c4p5n10}
\end{figure}

The key idea of OF-IGA in strong form is to construct an outlier-free spline space $\tilde{V}^h_p$, which is a subspace of the standard IGA spline space $V^h_p$. Thus, $\tilde{V}^h_p \subset V^h_p \subset H^1_0(\Omega)$. Specifically, the OF-IGA space $\tilde{V}^h_p$ for the Laplacian eigenvalue problem~\eqref{eq:pde} in 1D is defined as follows~\cite{ hiemstra2021removal}:
\begin{equation}
  \tilde{V}^h_p = \{ w \in C^{p-1}(\Omega): w|_E \in \mathbb{P}_p(E), \mathcal{L}^\alpha (w)|_{\partial \Omega} = 0, \alpha=0,1,\cdots, \alpha_p, \forall E \in \mathcal{T}_h \},
\end{equation}
where $\mathbb{P}_p$ is a space of polynomial of order $p$. The space $ \tilde{V}^h_p$ for multiple dimensions is defined using the tensor-product structural.  Herein, $\mathcal{L} = \Delta$ in 1D and $\alpha_p$ is defined as
\begin{equation}
  \alpha_p = \lfloor \frac{p-1}{2} \rfloor =
  \begin{cases}
    \frac{p-1}{2}, & p \quad \text{is odd}, \\
    \frac{p-2}{2}, & p \quad \text{is even}. \\
  \end{cases}
\end{equation}
For $p=1,2$, the condition $\mathcal{L}^\alpha (w)|_{\partial \Omega} = 0$ reduces to the homogeneous Dirichlet boundary condition in~\eqref{eq:pde}. Thus, $\tilde{V}^h_p \equiv V^h_p$. This extra penalization corresponds to the case of linear and $C^1$-quadratic IGA elements where there are no outliers in the approximate spectra.  For OF-IGA, we focus on $p>2$.  For $p=3,4,5$, the new B-spline basis functions (the internal ones remain the same) in 1D are shown and compared with the original B-splines in Figures~\ref{fig:c2p3n10},~\ref{fig:c3p4n10}, and~\ref{fig:c4p5n10}, respectively.

For $p=3$ in 1D, the space $\tilde{V}^h_p$ with $N=10$ uniform elements is spanned by the set of basis functions consisting of all but the first and last two new B-splines shown in Figure~\ref{fig:c2p3n10}.  Similarly, for $p=4$ in 1D, the space $\tilde{V}^h_p$ is spanned by the set of basis functions consisting of all but the first and last two new B-splines shown in Figure~\ref{fig:c3p4n10}.  In general, after removing the basis functions associated with the boundary conditions, the space $\tilde{V}^h_p$ has $N-1$ (new) B-spline basis functions for an odd order $p\ge1$ while $N$ B-spline basis functions for an even order $p\ge2$.  We refer to~\cite{ hiemstra2021removal, manni2022application, floater2019optimal, sande2019sharp} for details of the construction of new B-spline basis functions and the corresponding approximation space.  The standard IGA space can be written as $V^h_p = \{ w \in C^{p-1}(\Omega): w|_E \in \mathbb{P}_p, w|_{\partial \Omega} = 0, \forall E \in \mathcal{T}_h \}$ so that the relation $\tilde{V}^h_p \subset V^h_p$ is clear.  With the outlier-free approximation space in hand, the OF-IGA is to find $\tilde{\lambda}^h \in \mathbb{R}^+$ and $\tilde{u}^h \in V^h_p$ with $\| \tilde{u}^h \|_\Omega = 1$ such that
\begin{equation} \label{eq:vfhh}
a(w^h, \tilde{u}^h) =  \tilde{\lambda}^h b(w^h, \tilde{u}^h), \quad \forall \ w^h \in \tilde{V}^h_p,
\end{equation} 
which leads to a GMEVP (in a similar fashion as described in section~\ref{sec:iga})
\begin{equation} \label{eq:mevpp}
\tilde{K} \tilde{U} = \tilde{\lambda}^h \tilde{M} \tilde{U}.
\end{equation}
Lastly, we remark that for $C^0$-linear and $C^1$-quadratic elements, OF-IGA reduces to the standard IGA.

\section{Soft isogeometric analysis (SoftIGA)} \label{sec:softiga}

The softFEM, introduced in~\cite{ deng2021softfem}, removes the stopping bands appearing in the FEM spectra and, more importantly, reduces the stiffness and condition numbers of the discrete systems.  To extend the idea to IGA with B-splines, we reduce the stiffness by subtracting a penalized term on the higher-order derivative jumps due to the higher-order continuities.  For $C^{p-1}, p$-th-order elements, the $j$-th $j=0,1,\cdots, p-1$, derivatives are continuous.  Thus, there are no jumps on these derivatives.  The $p$-th order derivatives are constants on each element.  They are discontinuous at the element interfaces.  We thus impose a penalty on $p$-th order derivative jumps.  This penalization is the essential novelty of the proposed penalization technique compared to the ones in the softFEM and discontinuous Galerkin (DG) methods.

\subsection{SoftIGA}

We introduce softIGA by building on the OF-IGA approximation space $\tilde{V}^h_p$. For a tensor-product mesh $\mathcal{T}_h$, let $F$ denote a face while $\mathcal{F}$ represents the set of interior faces of the mesh. Each $F\in \mathcal{F}$ specifies an interface of two elements. Similarly, let $F_b$ denote a face at the boundary while $\mathcal{F}_b$ denote the set of boundary faces of the mesh. We introduce the $p$-th order derivative-jump at an interior interface of two neighboring elements $E_1$ and $E_2$:
$$
\lsem \hat\nabla^p v \cdot \bfs{n} \rsem = \hat\nabla^p v |_{E_1} \cdot \bfs{n}_1 + \hat\nabla^p v |_{E_2} \cdot \bfs{n}_2, \quad \forall v \in \tilde{V}^h_p(\mathcal{T}_h),
$$
where $\hat\nabla^p = (\partial^p_{x_1}, \cdots, \partial^p_{x_d})^T$, i.e., a vector of $p$-th order partial derivatives in each dimension.  $\bfs{n}_1$ and $\bfs{n}_2$ are the outward unit normals of elements $E_1$ and $E_2$, respectively.  For a boundary interface $F_b$ associated with element $E$, we define the jump as
$$
\lsem \hat\nabla^p v \cdot \bfs{n}_{F_b} \rsem = \hat\nabla^p v |_{E} \cdot \bfs{n}_{F_b},
$$
where $\bfs{n}_{F_b}$ is the outward unit normal of element $E$ at the boundary interface $F_b$. This simple construction is possible due to the isogeometric framework, which relies on isoparametric geometric descriptions with highly continuous geometric maps. 
Herein, we focus on the Cartesian setting with tensor-product meshes for simplicity to describe the main idea.

The softIGA is to find $\hat{\lambda}^h \in \mathbb{R}^+$ and $\hat{u}^h \in V^h_p$ with $\| \hat{u}^h \|_\Omega = 1$ such that
\begin{equation} \label{eq:vfhhh}
  a(w^h, \hat u^h) - \eta s(w^h, \hat u^h) =  \hat\lambda^h b(w^h, \hat u^h), \quad \forall \ w^h \in \tilde{V}^h_p,
\end{equation} 
where for $w, v \in \tilde V^h_p$ we define the softness bilinear form as
\begin{equation} \label{eq:bfs}
  s(w, v)  = 
  \begin{cases}
\vspace{.5cm}
    \sum_{F \in \mathcal{F}} h^{2p-1} (\lsem \hat\nabla^p w \cdot \bfs{n} \rsem, \lsem\hat\nabla^p v \cdot \bfs{n} \rsem ),  & p \quad \text{is odd}, \\
    \sum_{F \in \mathcal{F}} h^{2p-1} (\lsem \hat\nabla^p w \cdot \bfs{n} \rsem, \lsem\hat\nabla^p v \cdot \bfs{n} \rsem ) \\
    \qquad + 2\sum_{F_b \in \mathcal{F}_b} h^{2p-1} (\lsem \hat\nabla^p w \cdot \bfs{n} \rsem, \lsem\hat\nabla^p v \cdot \bfs{n} \rsem ) ,  & p \quad \text{is even}. \\
  \end{cases}
\end{equation}
Herein, $\eta\ge0$ is the softness parameter. When $\eta=0$, this reduces to OF-IGA. We set $\eta \in (0, \eta_{\max}]$ where $\eta_{\max}$ is to be determined such that $\hat{a}(w^h, \hat u^h) = a(w^h, \hat u^h) - \eta s(w^h, \hat u^h) $ is coercive.  The term $\eta s(w^h, \hat u^h)$ determines how much stiffness the softIGA system will reduce from the OF-IGA discretized system. Similarly, the softIGA formulation~\eqref{eq:vfhhh} leads to a GMEVP
\begin{equation} \label{eq:mevppp}
  \hat K \hat{U} = \hat{\lambda}^h \tilde{M} \hat{U}, \qquad \hat K := \tilde{K} -\eta S.
\end{equation}

\begin{remark}[Softness bilinear form $p$-dependence]
  The softness bilinear form $s(\cdot, \cdot)$ is defined in~\eqref{eq:bfs} differently for odd- or even-order elements.  The difference lies in the penalty on the boundary interfaces, which is consistent with the OF-IGA setting.  More importantly, it is consistent with the dispersion analysis with uniform elements in $\Omega=[0,1]$.  For odd-order elements, a basis function corresponds to a mesh node at the interface $x_j$ and its Bloch wave assumption takes the form $\sin( jh \omega_k)$, while for even-order elements, a basis function corresponds to a mesh middle point at $x_{j-1/2}$ and its Bloch wave assumption takes the form $\sin( (j-1/2)h \omega_k)$(cf.,~\cite{ hughes2014finite}).  Consequently, this setting leads to the desired Toeplitz-plus-Hankel matrices~\cite{ strang2014functions} which analytical eigenpairs can be derived~\cite{ deng2021analytical}.  We show coercivity for both cases in the next section.
\end{remark}

\subsection{Coercivity of softIGA bilinear form and error estimates}

Before we show coercivity, we first present the following well-known inverse inequality (see~\cite[Lemma 4]{sande2022ritz} or~\cite{ goetgheluck1990markov})

\begin{lemma}[Inverse inequality]\label{lem:inv}
Let $v \in \mathbb{P}_p([0, h])$. There holds
\begin{equation} \label{eq:inveq}
\| v' \|_{L^2([0,h])} \le C_{p,1} h^{-1}  \| v \|_{L^2([0,h])}, \quad C_{p,1} := \sqrt{ \frac{p(p+1)(p+2)(p+3)}{2} },
\end{equation}
where $v'$ denotes the first-derivative of v and $C_{p,1}$ is independent of $h$. 
\end{lemma}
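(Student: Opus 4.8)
The statement is the classical Markov-type inverse inequality on an interval of length $h$, and the plan is to reduce it to the reference interval $[0,1]$ by an affine change of variables and then invoke the known sharp constant there. First I would set $\hat{x} = x/h$, so that a polynomial $v \in \mathbb{P}_p([0,h])$ corresponds to $\hat{v}(\hat{x}) := v(h\hat{x}) \in \mathbb{P}_p([0,1])$. Under this scaling one has $v'(x) = h^{-1}\hat{v}'(\hat{x})$, and the $L^2$ norms transform as $\|v'\|_{L^2([0,h])}^2 = h \cdot h^{-2}\|\hat{v}'\|_{L^2([0,1])}^2$ and $\|v\|_{L^2([0,h])}^2 = h\|\hat{v}\|_{L^2([0,1])}^2$, so the factor of $h$ from the Jacobian cancels and the inequality on $[0,h]$ with constant $C_{p,1}h^{-1}$ is equivalent to the inequality $\|\hat{v}'\|_{L^2([0,1])} \le C_{p,1}\|\hat{v}\|_{L^2([0,1])}$ on the unit interval with the \emph{same} constant $C_{p,1}$, now manifestly $h$-independent.

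Second, I would establish the reference inequality $\|\hat{v}'\|_{L^2([0,1])} \le C_{p,1}\|\hat{v}\|_{L^2([0,1])}$ with $C_{p,1} = \sqrt{p(p+1)(p+2)(p+3)/2}$. The cleanest route is to recognize that this is the operator norm of differentiation on the finite-dimensional space $\mathbb{P}_p([0,1])$ equipped with the $L^2$ inner product, i.e.\ the square root of the largest eigenvalue of the generalized eigenvalue problem $A\mathbf{c} = \mu B\mathbf{c}$ where $B$ is the Gram (mass) matrix of a basis of $\mathbb{P}_p([0,1])$ and $A$ is the Gram matrix of the derivatives. Expanding $\hat{v}$ in the Legendre basis on $[0,1]$ diagonalizes $B$ and makes $A$ sparse, and the largest eigenvalue can be computed or bounded explicitly; alternatively one simply cites the known value from \cite{sande2022ritz} or \cite{goetgheluck1990markov}, which is exactly what the lemma statement does by attribution. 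Since the paper only needs to \emph{state} this as a known auxiliary result, the proof can be as short as "apply the affine scaling above and invoke \cite[Lemma 4]{sande2022ritz}."

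The main obstacle, if one wanted a self-contained argument, is pinning down the \emph{exact} constant $C_{p,1}$ rather than merely an $O(p^2)$ bound: a crude Markov inequality or a Bernstein-type estimate gives the right power of $h$ and the right growth in $p$ but not the precise value $\sqrt{p(p+1)(p+2)(p+3)/2}$, which requires identifying the top eigenvalue of the derivative operator in the $L^2$ norm exactly. This is a nontrivial spectral computation (it is the content of the cited references), so for the purposes of this paper I would not reproduce it — the scaling reduction plus citation is the intended proof, and the remaining sections only use the inequality as a black box to control the jump terms $s(\cdot,\cdot)$ against the energy form $a(\cdot,\cdot)$ when establishing coercivity of $\hat{a}(\cdot,\cdot)$ for $\eta < \eta_{\max}$.
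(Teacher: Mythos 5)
Your proposal matches the paper's treatment: the paper states this lemma without proof, simply citing \cite[Lemma 4]{sande2022ritz} and \cite{goetgheluck1990markov}, and your affine rescaling to the reference interval $[0,1]$ followed by invoking the cited constant is exactly the intended (and standard) argument. One small caveat: $C_{p,1}$ is not the sharp operator norm of differentiation on $\mathbb{P}_p$ in $L^2$ --- the paper's remark immediately after the lemma notes that sharper constants exist --- so it should be described as a valid cited upper bound rather than the exact largest eigenvalue of the associated generalized eigenvalue problem.
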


A shaper constant $C_{p,1}$ is derived in~\cite{ ozisik2010constants, goetgheluck1990markov}.  However, these sharper constants are established only for lower-order elements and are not in a general form of $p$.  Herein, for the completeness of $p$, we apply~\eqref{eq:inveq}.  Using this inequality recursively, one obtains the following.

\begin{corollary}[Higher-order inverse inequality]\label{coro:inv}
  Let $v \in \mathbb{P}_p([0, h])$ and $k\le p$. There holds
  \begin{equation}
    \| v^{(p)} \|_{L^2([0,h])} \le C_{p,2} h^{k-p}  \| v^{(k)} \|_{L^2([0,h])}, 
  \end{equation}
  where
  \begin{equation}
    C_{p,2} := \sqrt{ \frac{(p-k)! \cdot (p-k+1)! \cdot (p-k+2)! \cdot (p-k+3)! }{3 \cdot 2^{p-k+2} }}
  \end{equation}
  and $C_{p,2}$ is independent of $h$. 
\end{corollary}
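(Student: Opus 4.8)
The plan is to prove Corollary~\ref{coro:inv} by iterating the inverse inequality of Lemma~\ref{lem:inv}. The key observation is that if $v \in \mathbb{P}_p([0,h])$, then $v^{(k)} \in \mathbb{P}_{p-k}([0,h])$, so we may apply Lemma~\ref{lem:inv} with degree $p-k$ to the polynomial $v^{(k)}$, then with degree $p-k-1$ to the polynomial $(v^{(k)})' = v^{(k+1)}$, and so on, stepping up one derivative at a time until we reach $v^{(p)}$. Each application contributes a factor of $h^{-1}$ and a factor of $C_{j,1}$ with the appropriate (decreasing) polynomial degree.

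First I would set $m := p-k$ and write, for $j = 0, 1, \ldots, m-1$,
\begin{equation*}
  \| v^{(k+j+1)} \|_{L^2([0,h])} \le C_{m-j,1}\, h^{-1}\, \| v^{(k+j)} \|_{L^2([0,h])},
\end{equation*}
which is legitimate since $v^{(k+j)} \in \mathbb{P}_{m-j}([0,h])$. Multiplying these $m$ inequalities together (telescoping the norms) gives
\begin{equation*}
  \| v^{(p)} \|_{L^2([0,h])} \le h^{-m} \left( \prod_{j=0}^{m-1} C_{m-j,1} \right) \| v^{(k)} \|_{L^2([0,h])} = h^{k-p} \left( \prod_{i=1}^{p-k} C_{i,1} \right) \| v^{(k)} \|_{L^2([0,h])}.
\end{equation*}
It then remains to simplify the product $\prod_{i=1}^{p-k} C_{i,1}$ using the closed form $C_{i,1} = \sqrt{i(i+1)(i+2)(i+3)/2}$ from Lemma~\ref{lem:inv} and check that it equals the stated $C_{p,2}$.

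For the bookkeeping step, I would square the product to get
\begin{equation*}
  \prod_{i=1}^{m} C_{i,1}^2 = \prod_{i=1}^{m} \frac{i(i+1)(i+2)(i+3)}{2} = \frac{1}{2^m} \left(\prod_{i=1}^m i\right)\left(\prod_{i=1}^m (i+1)\right)\left(\prod_{i=1}^m (i+2)\right)\left(\prod_{i=1}^m (i+3)\right),
\end{equation*}
and recognize each factor as a ratio of factorials: $\prod_{i=1}^m i = m!$, $\prod_{i=1}^m (i+1) = (m+1)!/1!$, $\prod_{i=1}^m (i+2) = (m+2)!/2!$, and $\prod_{i=1}^m (i+3) = (m+3)!/3!$. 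Substituting $m = p-k$ yields
\begin{equation*}
  \prod_{i=1}^{p-k} C_{i,1}^2 = \frac{(p-k)!\,(p-k+1)!\,(p-k+2)!\,(p-k+3)!}{2^{p-k}\cdot 1!\cdot 2!\cdot 3!} = \frac{(p-k)!\,(p-k+1)!\,(p-k+2)!\,(p-k+3)!}{3\cdot 2^{p-k+2}},
\end{equation*}
since $1!\cdot 2!\cdot 3! = 12 = 3\cdot 2^2$, which matches $C_{p,2}^2$ exactly. Taking square roots completes the proof.

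I do not anticipate a genuine obstacle here — the argument is a straightforward telescoping induction. The only place requiring mild care is the factorial bookkeeping in the constant: one must track the four shifted product ranges correctly and verify $1!\,2!\,3! = 3\cdot 2^{2}$ so that the exponent of $2$ comes out as $p-k+2$ rather than $p-k$. A minor point worth a remark is the degenerate case $k = p$, where the product is empty and the inequality reads $\|v^{(p)}\| \le \|v^{(p)}\|$ with $C_{p,2} = 1$ (consistent with $0!\cdot 1!\cdot 2!\cdot 3!/(3\cdot 2^2) = 12/12 = 1$), so the statement holds trivially; and $k=0$ recovers the full $p$-fold iteration.
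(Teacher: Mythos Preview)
Your proposal is correct and follows exactly the approach indicated in the paper, which simply states that the corollary follows by applying Lemma~\ref{lem:inv} recursively; you have supplied the details of that recursion and carried out the factorial bookkeeping for the constant $C_{p,2}$ accurately.
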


We now establish the following result.

\begin{lemma}[Discrete trace inequality, cuboid] \label{lem:tracegrad}
  Let $\tau := [0, h_1] \times \ldots \times [0, h_d] \subset \mathbb{R}^d$ with $h_j > 0$ for all $j\in \{1,\ldots,d\}$ be a cuboid with boundary $\partial \tau$ and outward normal $\bfs{n}_\tau$.  Let $h_\tau^0:=\min_{i\in\{1,\ldots,d\}} h_i$ be the length of the smallest edge of $\tau$. The following holds:
  \begin{equation} \label{eq:trace_cuboid}
    \| \hat\nabla^p v \cdot \bfs{n}_\tau  \|_{\partial \tau} \le \sqrt{2} (h_\tau^0)^{-1/2} \|  \hat\nabla^p v   \|_{\tau}, \quad \forall v \in \mathbb{P}_p(\tau).
  \end{equation}
\end{lemma}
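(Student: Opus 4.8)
The plan is to reduce the multi-dimensional estimate to a one-dimensional one, direction by direction, and then apply a standard one-dimensional polynomial trace inequality on an interval. First I would observe that the boundary $\partial\tau$ of the cuboid decomposes into $2d$ faces, each of the form $\{x_i = 0\}$ or $\{x_i = h_i\}$ for some $i$, and that on such a face the outward normal $\bfs{n}_\tau$ is $\pm \bfs{e}_i$, so $\hat\nabla^p v \cdot \bfs{n}_\tau = \pm \partial_{x_i}^p v$ there. Hence
\begin{equation} \label{eq:trace_split}
\| \hat\nabla^p v \cdot \bfs{n}_\tau \|_{\partial\tau}^2 = \sum_{i=1}^d \Big( \| \partial_{x_i}^p v \|_{\{x_i=0\}}^2 + \| \partial_{x_i}^p v \|_{\{x_i=h_i\}}^2 \Big).
\end{equation}
This converts the problem into controlling, for each $i$, the $L^2$-norm of $\partial_{x_i}^p v$ on the two faces orthogonal to the $x_i$-direction by the $L^2(\tau)$-norm of $\partial_{x_i}^p v$.

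Next, for a fixed $i$, I would fix all variables except $x_i$ and view $w(x_i) := \partial_{x_i}^p v(x_1,\dots,x_d)$ as a univariate polynomial in $x_i$ of degree at most $p - p = 0$ — in fact $\partial_{x_i}^p v$ is independent of $x_i$ since $v$ is degree $p$ in each variable (this is exactly why the jump terms are constants on elements, as noted before the lemma). Because $\partial_{x_i}^p v$ does not depend on $x_i$, its trace on either face $\{x_i = 0\}$ or $\{x_i = h_i\}$ equals its value throughout, and integrating over that face and comparing with the integral over $\tau$ gives
\begin{equation}
\| \partial_{x_i}^p v \|_{\{x_i=0\}}^2 = \| \partial_{x_i}^p v \|_{\{x_i=h_i\}}^2 = h_i^{-1} \| \partial_{x_i}^p v \|_{\tau}^2.
\end{equation}
Substituting into \eqref{eq:trace_split} yields
\begin{equation}
\| \hat\nabla^p v \cdot \bfs{n}_\tau \|_{\partial\tau}^2 = \sum_{i=1}^d 2 h_i^{-1} \| \partial_{x_i}^p v \|_\tau^2 \le 2 (h_\tau^0)^{-1} \sum_{i=1}^d \| \partial_{x_i}^p v \|_\tau^2 = 2 (h_\tau^0)^{-1} \| \hat\nabla^p v \|_\tau^2,
\end{equation}
using $h_i \ge h_\tau^0$ and the definition $\| \hat\nabla^p v \|_\tau^2 = \sum_i \| \partial_{x_i}^p v \|_\tau^2$. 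Taking square roots gives \eqref{eq:trace_cuboid}.

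I expect the only subtlety — not really an obstacle — to be bookkeeping: making sure the definition of $\hat\nabla^p v$ as the vector $(\partial_{x_1}^p v, \dots, \partial_{x_d}^p v)^T$ is used consistently on both sides (so that the inner product with the coordinate normal picks out exactly one component), and that the constant $\sqrt 2$ comes from the two parallel faces in each direction rather than from any Cauchy–Schwarz slack. In particular, one does not need the general polynomial inverse/trace inequalities of Lemma~\ref{lem:inv} or Corollary~\ref{coro:inv} here, because $\partial_{x_i}^p v$ is constant in $x_i$; those results will instead be the tools for the subsequent coercivity argument where one must bound $s(\cdot,\cdot)$ against the $H^1$-seminorm. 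If one prefers to keep the argument uniform with the rest of the paper, one can alternatively invoke the one-dimensional trace inequality $\|w\|_{\{0,h\}}^2 \le C h^{-1}\|w\|_{[0,h]}^2$ for $w \in \mathbb{P}_0$ (trivially with $C=2$) in place of the explicit computation above.
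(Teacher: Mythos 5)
Your proposal is correct and follows essentially the same argument as the paper's proof: split $\partial\tau$ into the pairs of faces normal to each coordinate direction, use that $\partial_{x_i}^p v$ is constant in $x_i$ to convert each face integral exactly into $h_i^{-1}\|\partial_{x_i}^p v\|_\tau^2$, then sum, bound $h_i^{-1}\le (h_\tau^0)^{-1}$, and take square roots. Your closing remark that Lemma~\ref{lem:inv} and Corollary~\ref{coro:inv} are not needed here (they enter only in the subsequent trace-inverse and coercivity estimates) matches the paper's structure as well.
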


\begin{proof}
  We denote $\partial \tau = \cup_{j=1}^d \mathcal{F}_{x_j}$ where $\mathcal{F}_{x_j}$ contains two faces located at $x_j =0, h_j$.  We note that the $p$-th order partial derivative $\partial^p_{x_j} v$ is constant with respect to $x_j$.  Thus, we have
\begin{equation*} 
\begin{aligned}
\| \hat\nabla^p v \cdot \bfs{n}\|_{\mathcal{F}_{x_j}}^2 & = \int_{\mathcal{F}_{x_j}} (\partial^p_{x_j} v |_{x_j=0})^2 \ ds +  \int_{\mathcal{F}_{x_j}} (\partial^p_{x_j} v |_{x_j=h_j})^2 \ ds   \\
& = 2 h_j^{-1} \int_\tau (\partial^p_{x_j} v )^2 \ d\bfs{x}  \\
& = 2h_j^{-1}   \| \partial^p_{x_j} v  \|_{ \tau}^2,
\end{aligned}
\end{equation*}
where $ds$ specifies a surface differential.  Summing the above equalities and then taking the square root yield the desired result. 
\end{proof}

The equality can be attained when $h_j=h_k$ for all $j, k \in \{1,\ldots,d\}$ or when the right-hand side is written in a summation over each dimension.  Combining Corollary~\ref{coro:inv} using $k=1$ with Lemma~\ref{lem:tracegrad}, we obtain the following result.

\begin{corollary}[Trace-inverse inequality]\label{coro:t}
With the setting in Lemma~\ref{lem:tracegrad}. There holds
\begin{equation} 
  \| \hat\nabla^p v \cdot \bfs{n}_\tau  \|_{\partial \tau} \le C_{p,3} (h_\tau^0)^{1/2-p} \| \nabla v  \|_{\tau}, \quad \forall v \in \mathbb{P}_p(\tau),
\end{equation}
where
\begin{equation} \label{eq:c3}
C_{p,3} := \sqrt{ \frac{(p-1)! \cdot p! \cdot (p+1)! \cdot (p+2)! }{3 \cdot 2^p }}
\end{equation}
and $C_{p,3}$ is independent of $h$. 
\end{corollary}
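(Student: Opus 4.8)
The plan is to chain the two results already in hand. Lemma~\ref{lem:tracegrad} bounds the boundary trace $\| \hat\nabla^p v \cdot \bfs{n}_\tau \|_{\partial\tau}$ by the bulk quantity $\| \hat\nabla^p v \|_\tau$, and Corollary~\ref{coro:inv} (with $k=1$) trades $p$-th order derivatives for first-order ones at the price of a factor $h^{1-p}$. So the only substantive step is to lift the one-dimensional Corollary~\ref{coro:inv} to the cuboid $\tau$ in a way that yields exactly the constant $C_{p,3}$ in~\eqref{eq:c3}.

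First I would record the identities $\| \hat\nabla^p v \|_\tau^2 = \sum_{j=1}^d \| \partial^p_{x_j} v \|_\tau^2$ and $\| \nabla v \|_\tau^2 = \sum_{j=1}^d \| \partial_{x_j} v \|_\tau^2$ from the definition of $\hat\nabla^p$ and of the gradient. Then, for each fixed direction $j$, I would apply Corollary~\ref{coro:inv} slicewise: for almost every choice of the remaining coordinates, the map $t \mapsto v(\ldots,t,\ldots)$ is a univariate polynomial of degree at most $p$ on $[0,h_j]$ — the same observation used in the proof of Lemma~\ref{lem:tracegrad} — so Corollary~\ref{coro:inv} with $k=1$ gives
\[
  \int_0^{h_j} |\partial^p_{x_j} v|^2 \, dx_j \;\le\; C_{p,2}^2 \, h_j^{2-2p} \int_0^{h_j} |\partial_{x_j} v|^2 \, dx_j ,
\]
with $C_{p,2}$ evaluated at $k=1$, i.e.\ $C_{p,2}^2 = \dfrac{(p-1)!\,p!\,(p+1)!\,(p+2)!}{3 \cdot 2^{p+1}}$. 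Integrating over the remaining coordinates by Fubini, and using $h_j \ge h_\tau^0$ together with $2-2p \le 0$ to replace $h_j^{2-2p}$ by $(h_\tau^0)^{2-2p}$, yields $\| \partial^p_{x_j} v \|_\tau^2 \le C_{p,2}^2 (h_\tau^0)^{2-2p} \| \partial_{x_j} v \|_\tau^2$; summing over $j$ gives $\| \hat\nabla^p v \|_\tau \le C_{p,2} (h_\tau^0)^{1-p} \| \nabla v \|_\tau$.

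Inserting this into Lemma~\ref{lem:tracegrad} produces
\[
  \| \hat\nabla^p v \cdot \bfs{n}_\tau \|_{\partial\tau} \;\le\; \sqrt{2}\,(h_\tau^0)^{-1/2} \| \hat\nabla^p v \|_\tau \;\le\; \sqrt{2}\, C_{p,2}\, (h_\tau^0)^{1/2-p} \| \nabla v \|_\tau ,
\]
and a one-line computation shows $\sqrt{2}\, C_{p,2} = \sqrt{\dfrac{(p-1)!\,p!\,(p+1)!\,(p+2)!}{3 \cdot 2^{p}}} = C_{p,3}$, which is the claimed bound; the $h$-independence of $C_{p,3}$ is evident from the formula. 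I expect no real obstacle here — the work is bookkeeping: making the slicewise application of the univariate corollary rigorous with a brief Fubini/measurability remark, and being careful that the monotonicity step $h_j^{2-2p} \le (h_\tau^0)^{2-2p}$ relies on the \emph{sign} of the exponent rather than its magnitude. The constant is then forced, being the product of $C_{p,2}|_{k=1}$ with the $\sqrt{2}$ coming from Lemma~\ref{lem:tracegrad}.
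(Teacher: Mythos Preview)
Your proposal is correct and follows exactly the approach the paper indicates: combine Lemma~\ref{lem:tracegrad} with Corollary~\ref{coro:inv} at $k=1$, and verify $\sqrt{2}\,C_{p,2}|_{k=1}=C_{p,3}$. Your slicewise Fubini argument to lift the one-dimensional inverse inequality to the cuboid is the natural fleshing-out of what the paper leaves implicit.
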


Figure \ref{fig:cp3} shows the growth of $C_{p,3}$ with respect to $p$ in a semi-logarithmic scale. 
We now establish the coercivity of the new bilinear form. 

\begin{figure}[h!]
\centering
\includegraphics[height=6cm]{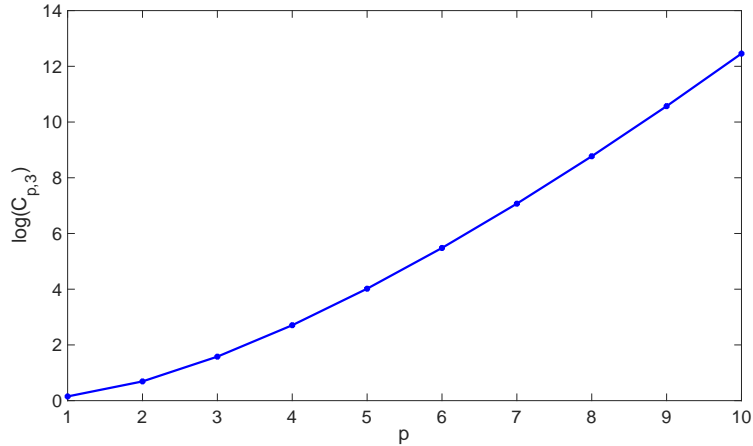} 
\caption{Growth of $C_{p,3}$ with respect to $p$ for $p = 1,2,\cdots, 10$.}
\label{fig:cp3}
\end{figure}

\begin{theorem}[Coercivity]\label{thm:coe}
  For softIGA bilinear form defined in~\eqref{eq:vfhhh}, let $\eta_{\max} = \frac{1}{2C^2_{p,3}}$ where $C_{p,3}$ is defined in~\eqref{eq:c3} and let the softness parameter $\eta\in [0,\eta_{\max})$. There holds:
\begin{equation}
  \hat a(w, w) \ge \beta | w |^2_{H^1(\Omega)}, \qquad \forall w \in \tilde{V}^h_p(\mathcal{T}_h), 
\end{equation} 
with $\beta := 1-\frac{\eta}{\eta_{\max}} >0$.
\end{theorem}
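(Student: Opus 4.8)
The plan is to dominate the softness bilinear form by the $H^1$-seminorm, with the precise constant $1/\eta_{\max}=2C_{p,3}^2$, and then subtract. Since $a(w,w)=\|\nabla w\|_\Omega^2=|w|^2_{H^1(\Omega)}$ and $\hat a(w,w)=a(w,w)-\eta\, s(w,w)$, the key step is to prove $s(w,w)\le (1/\eta_{\max})\,|w|^2_{H^1(\Omega)}$ for every $w\in\tilde V^h_p(\mathcal T_h)$; this immediately gives $\hat a(w,w)\ge(1-\eta/\eta_{\max})\,|w|^2_{H^1(\Omega)}=\beta\,|w|^2_{H^1(\Omega)}$, with $\beta>0$ exactly when $\eta<\eta_{\max}$. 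Note that only the seminorm is needed, which is automatic because both $a$ and $s$ see derivatives of $w$ alone.

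First I would bound the jump on a single interior face $F=\bar E_1\cap\bar E_2$. Using the triangle inequality in $L^2(F)$ together with $(a+b)^2\le 2a^2+2b^2$, one has $\|\lsem\hat\nabla^p w\cdot\bfs{n}\rsem\|_F^2\le 2\|\hat\nabla^p w|_{E_1}\cdot\bfs{n}_1\|_F^2+2\|\hat\nabla^p w|_{E_2}\cdot\bfs{n}_2\|_F^2$. Summing over $F\in\mathcal F$, and, for even $p$, adding the boundary contribution in \eqref{eq:bfs} (which already carries the explicit factor $2$), each face of each element is counted at most once on the right-hand side, so $s(w,w)\le 2\sum_{E\in\mathcal T_h}h^{2p-1}\,\|\hat\nabla^p w\cdot\bfs{n}_E\|_{\partial E}^2$ in both the odd and even cases (in the odd case we simply add the nonnegative boundary-face terms to the right).

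Next I would apply the trace-inverse inequality of Corollary~\ref{coro:t} on each element $E$, regarded as a cuboid with smallest edge $h_E^0$: $\|\hat\nabla^p w\cdot\bfs{n}_E\|_{\partial E}^2\le C_{p,3}^2\,(h_E^0)^{1-2p}\,\|\nabla w\|_E^2$. On a uniform 1D mesh (and on a tensor-product mesh in $\mathbb{R}^d$) every edge has length $h$, hence $h^{2p-1}(h_E^0)^{1-2p}=1$ and $h^{2p-1}\|\hat\nabla^p w\cdot\bfs{n}_E\|_{\partial E}^2\le C_{p,3}^2\,\|\nabla w\|_E^2$. Summing over the elements and using $\sum_{E}\|\nabla w\|_E^2=\|\nabla w\|_\Omega^2=|w|^2_{H^1(\Omega)}$ gives $s(w,w)\le 2C_{p,3}^2\,|w|^2_{H^1(\Omega)}=(1/\eta_{\max})\,|w|^2_{H^1(\Omega)}$, which closes the argument.

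The main obstacle is the constant bookkeeping: the factor $2$ must come out so that the threshold is exactly $\eta_{\max}=1/(2C_{p,3}^2)$. This relies on (i) the inequality $(a+b)^2\le 2a^2+2b^2$ on the interior jumps, (ii) the combinatorial observation that regrouping the per-element face contributions over $\mathcal F$ never double-counts a face, and (iii) the consistent placement of the extra factor $2$ on the boundary faces in the even-$p$ definition of $s$ in \eqref{eq:bfs}, together with the uniform (tensor-product) geometry that forces $h_E^0=h$ so the $h$-powers cancel exactly. Everything else is a routine application of Corollary~\ref{coro:t}.
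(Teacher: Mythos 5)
Your proposal is correct and follows essentially the same route as the paper's proof: bound the interior jumps via $(a+b)^2\le 2a^2+2b^2$, regroup the face sums element-by-element, apply the trace-inverse inequality of Corollary~\ref{coro:t} so that the $h^{2p-1}$ weight cancels on the uniform tensor-product mesh, and conclude $s(w,w)\le 2C_{p,3}^2\,a(w,w)$, hence $\hat a(w,w)\ge(1-\eta/\eta_{\max})\,|w|^2_{H^1(\Omega)}$. Your explicit remarks on the non-double-counting of faces and on $h_E^0=h$ are only slightly more detailed versions of steps the paper leaves implicit.
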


\begin{proof}
  Following~\cite[\S 5.2]{ deng2021softfem}, we denote by $\mathcal{T}_F$ be the set collecting the two mesh elements sharing $F\in\mathcal{F}$.  By softness bilinear form definition~\eqref{eq:bfs}, we calculate first for odd $p$
\[
s(w,w) = \sum_{F \in \mathcal{F}} h^{2p-1} (\lsem \hat\nabla^p w \cdot \bfs{n} \rsem, \lsem\hat\nabla^p w \cdot \bfs{n} \rsem )
\le 2\sum_{F \in \mathcal{F} }\sum_{E\in \mathcal{T}_F} h^{2p-1}  \| \hat\nabla^p w |_E \cdot \bfs{n}_\tau\|_F^2.
\]
For even $p$, we have 
\[
s(w,w) 
\le 2\sum_{F \in \mathcal{F} }\sum_{E\in \mathcal{T}_F} h^{2p-1}  \| \hat\nabla^p w |_E \cdot \bfs{n}_\tau\|_F^2 + 2\sum_{F_b \in \mathcal{F}_b} h^{2p-1} \| \hat\nabla^p w |_{E_{F_b}} \cdot \bfs{n}_\tau\|_{F_b}^2,
\]
where $E_{F_b}$ is the boundary element containing $F_b$.  For both cases, exchanging the order of the two summations leads to
\[ 
s(w,w) \le 2 \sum_{E \in \mathcal{T}_h} h^{2p-1}  \| \hat\nabla^p w \cdot \bfs{n}_E \|_{\partial_E}^2.
\]
Applying Corollary~\ref{coro:t} yields $s(w,w) \le 2C^2_{p,3} a(w, w)$. Consequently, we have
\begin{equation} \label{eq:hat_a_a}
\hat a(w, w) = a(w, w) - \eta s(w,w) \ge (1 - 2 C^2_{p,3} \eta) a(w,w) = (1 - 2 C^2_{p,3} \eta) | w |^2_{H^1(\Omega)},
\end{equation}
which leads to the desired result.
\end{proof}

\begin{remark}[Sharpness of $\eta_{\max}$]
  The constants $C_{p,1}$ and $C_{p,3}$ are sharp for $v \in \mathbb{P}_p([0, h])$ and $v \in \mathbb{P}_p(\tau)$, respectively.  These constants may not be sharp for the splines $v \in \tilde{V}^h_p(\mathcal{T}_h)$.  Numerical experiments show that $\eta_{\max} = \frac{1}{2C^2_{2,3}} = \frac{1}{48}$ is sharp to guarantee the coercivity for $C^1$ quadratic uniform softIGA elements.  For $C^2$-cubic and $C^3$-quartic uniform softIGA elements, the sharp values (on uniform mesh) are $\eta_{\max} = \frac{1}{480} > \frac{1}{2C^2_{3,3}} = \frac{1}{2,880}$ and $\eta_{\max} = \frac{17}{80,640} > \frac{1}{2C^2_{4,3}} = \frac{1}{259,200}$, respectively.  We detail these estimates in the next section.
\end{remark}

Lastly, the penalty term admits consistency in that the softness bilinear $s(\cdot, \cdot)$ vanishes for analytic solutions of $C^p(\Omega)$.  This property guarantees the Galerkin orthogonality for softIGA of the corresponding source problem $-\Delta u = f$.  Thus, consistency and coercivity allow us to expect optimal approximation properties for eigenvalues and smooth eigenfunctions:
\begin{equation} \label{eq:ee}
  \big| \hat \lambda_j^h - \lambda_j \big|  \le C_{\eta,j}h^{2p}, \qquad | u_j - \hat u_j^h |_{H^1(\Omega)} \le C_{\eta,j} h^{p}, 
\end{equation}
where $C_{\eta,j}$ is a positive constant independent of the mesh-size $h$ and depending on $\eta$ and the $j$-th eigenpair.  Following \cite[\S~6]{strang1973analysis} and~\cite{ deng2021softfem}, one can obtain the Pythagorean identity for each eigenpair $(\hat{\lambda}^h_j, \hat{u}_j^h)$
\begin{equation*} 
||| u_j - \hat u_j^h |||^2 = \lambda_j \| u_j - \hat u_j^h \|^2_{\Omega} + \hat{\lambda}_j^h - \lambda_j,
\end{equation*}
where $||| \cdot ||| := \hat a(\cdot, \cdot).$ Similarly, by following~\cite{ deng2021softfem}, one can derive the eigenvalue lower and upper bounds, 
\begin{equation}\label{eq:bds}
(1-2\eta C^2_{p,3}) \tilde{\lambda}_j^h \le \hat \lambda_j^h < \tilde{\lambda}_j^h, \qquad \eta\in (0,\eta_{\max}).
\end{equation}

\subsection{Stiffness reduction} \label{sec:s}

The condition numbers of the resulting matrix eigenvalue problems characterizes the stiffness of the discrete systems~\eqref{eq:vfh},~\eqref{eq:vfhh}, and~\eqref{eq:vfhhh}.  The stiffness and mass matrices are symmetric, thus the condition numbers for the systems~\eqref{eq:vfh},~\eqref{eq:vfhh}, and~\eqref{eq:vfhhh} are
\begin{equation}
  \gamma := \frac{\lambda^h_{\max}}{\lambda^h_{\min}}, \qquad \tilde{\gamma} := \frac{\tilde{\lambda}^h_{\max}}{\tilde{\lambda}^h_{\min}}, \qquad \text{and} \qquad \hat{\gamma} := \frac{\hat{\lambda}^h_{\max}}{\hat{\lambda}^h_{\min}}, 
\end{equation}
respectively.  We focus on softIGA as OF-IGA is a special case when $\eta = 0$.

Following the definitions in~\cite{ deng2021softfem} for softFEM, we define the \textit{condition number reduction ratio} of softIGA with respect to IGA as
\begin{equation} \label{eq:srr}
  \rho^h := \frac{\gamma}{\hat\gamma} = \frac{\lambda^h_{\max} }{\hat \lambda^h_{\max} } \cdot \frac{\hat \lambda^h_{\min} }{\lambda^h_{\min}}.
\end{equation}
In general, $\rho^h$ depends on the mesh and element order. We denote $\rho = \lim_{h\to 0} \rho^h.$ Moreover, the smallest eigenvalue is approximated well with a few elements. Thus, one has $\lambda^h_{\min} \approx \hat \lambda^h_{\min}$ when using a few elements.  Consequently, the ratio of the largest eigenvalues characterizes the reduction ratio. Lastly, we define the \textit{condition number reduction percentage} as
\begin{equation} \label{eq:srp}
  \varrho^h = 100 \frac{\gamma-\hat \gamma }{\gamma}\,\% = 100(1-1/\rho^h) \, \%
\end{equation} 
and its asymptotic percentage as $\varrho = \lim_{h\to 0} \varrho^h.$

We specify these values for the $p$-th order element using a subscript of $p$. For example, we use $\varrho_p$ to denote the reduction percentage for $p$-th order element. Next, we derive some analytical and asymptotic results.

\section{Exact eigenpairs and stiffness reduction in 1D} \label{sec:sr}

We focus on softIGA with uniform elements in 1D to establish sharp values for $\eta_{\max}$; then, using tensor products, we generalize the 1D analytical results to multidimensional problems by following the derivations in~\cite{ calo2019dispersion}. The $C^0$-linear softIGA elements are identical to the linear softFEM. We refer to~\cite[\S 3.1]{ deng2021softfem} for the exact eigenpairs and the asymptotic stiffness reduction ratio and percentage ($\rho_1 = \frac{3}{2}$ and $\varrho_1 = 33.3 \%$ respectively).  We distinguish the stiffness and mass matrices for the $p$-th order elements with a subscript $p$; for example, we use $\tilde{K}_p$ to denote the $p$-th order OF-IGA stiffness matrix.

\subsection{$C^1$-quadratic elements}

For $C^1$-quadratic splines with $N$ uniform elements, it is well-known that the OF-IGA (it is the same as the IGA in this case) bilinear forms $a(\cdot, \cdot)$ and $b(\cdot, \cdot)$  lead to the following stiffness and mass matrices (see, for example, \cite[\S~A.2]{ hughes2014finite}):
\begin{equation}
\tilde{K}_2  = \frac{1}{h}
\begin{bmatrix}
  \frac{4}{3} & -\frac{1}{6} & -\frac{1}{6} \\[0.2cm]
  -\frac{1}{6} & 1 & -\frac{1}{3} & -\frac{1}{6} \\[0.2cm]
  -\frac{1}{6} & -\frac{1}{3} & 1 & -\frac{1}{3} & -\frac{1}{6} \\[0.2cm]
  & \ddots & \ddots & \ddots & \ddots & \ddots &  \\[0.2cm]
  &  & -\frac{1}{6} & -\frac{1}{3} & 1 & -\frac{1}{6}  \\[0.2cm]
  && & -\frac{1}{6} & -\frac{1}{6} & \frac{4}{3}  \\
\end{bmatrix}, 
\tilde{M}_2 = h
\begin{bmatrix}
  \frac{1}{3} & \frac{5}{24} & \frac{1}{120} \\[0.2cm]
  \frac{5}{24} & \frac{11}{20} & \frac{13}{60} & \frac{1}{120} \\[0.2cm]
  \frac{1}{120} & \frac{13}{60} & \frac{11}{20} & \frac{13}{60} & \frac{1}{120} \\[0.2cm]
  & \ddots & \ddots & \ddots & \ddots & \ddots &  \\[0.2cm]
  &  & \frac{1}{120} & \frac{13}{60} & \frac{11}{20} & \frac{5}{24} \\[0.2cm]
  && & \frac{1}{120} & \frac{5}{24} & \frac{1}{3}  \\
\end{bmatrix},
\end{equation}
which are of dimension $N\times N$.  The softness bilinear form $s(\cdot, \cdot)$ leads to the matrix
\begin{equation}
  S_2 = 
  \begin{bmatrix}
    35 & -21 & 7 & -1 \\
    -21 & 21 & - 15 & 6 & -1 \\
    7 & -15 & 20 & -15 & 6 & -1 \\
    -1 & 6 & -15 & 20 & -15 & 6 & -1 \\
    & \ddots & \ddots & \ddots & \ddots & \ddots & \ddots & \ddots \\
  \end{bmatrix}_{N\times N},
\end{equation}
where the entries near the right boundary are such that the matrix is symmetric and persymmetric.  Also, these matrices are Toeplitz-plus-Hankel matrices, and the analytical eigenpairs can be derived by following \cite[\S~2.2]{ deng2021analytical}.

\begin{lemma}[Analytical eigenvalues and eigenvectors, $p=2$] \label{lem:epp2}
  For $C^1$ quadratic softIGA with $N$ uniform elements on $[0,1]$.  The GMEVP~\eqref{eq:mevppp} is $(\hat K_2 - \eta S_2) \hat U = \hat \lambda^h \tilde M_2 \hat U.$ Its eigenpairs are $(\hat{\lambda}_j^h, \hat{U}_j)$ for all $j\in\{1,\ldots,N\}$ where
  \begin{equation} \label{eq:epp2}
    \begin{aligned}
      \hat\lambda_j^h & = \frac{80\sin^2(\frac{t_j}{2})}{h^2}\frac{2 - 18\eta + (1+24\eta) \cos(t_j) -6 \eta \cos(2t_j)}{33 + 26\cos(t_j) + \cos(2t_j) },  \\
      \hat{U}_{j} & = c_j \big(\sin((k-1/2)t_j)\big)_{k \in\{1,\ldots, N\}}
    \end{aligned}
  \end{equation}
  with $t_j:=j \pi h$ and some normalization constant $c_j> 0$.
\end{lemma}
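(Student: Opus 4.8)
The plan is to diagonalise the generalized pencil in~\eqref{eq:mevppp} \emph{explicitly}, by exhibiting one orthogonal basis that is simultaneously an eigenbasis of all three matrices $\tilde K_2$, $\tilde M_2$ and $S_2$. Each of them is real, symmetric, banded, and persymmetric, with constant-coefficient interior rows and only two modified rows at each end; this is exactly the Toeplitz-plus-Hankel structure analysed in \cite[\S~2.2]{deng2021analytical}, whose eigenvectors are the discrete-sine-transform (DST-II) vectors $v_j := \bigl(\sin((k-\tfrac12)t_j)\bigr)_{k=1}^N$ with $t_j = j\pi h = j\pi/N$. First I would compute the three interior symbols: writing the interior stencil of a matrix $A$ as $a_0 + 2\sum_{m\ge1} a_m\cos(mt)$ and invoking the sum-to-product identity $\sin((k-m-\tfrac12)t)+\sin((k+m-\tfrac12)t)=2\cos(mt)\sin((k-\tfrac12)t)$, one reads off
\begin{align*}
  \sigma_{\tilde K_2}(t) &= \tfrac1h\bigl(1-\tfrac23\cos t-\tfrac13\cos 2t\bigr) = \tfrac{4\sin^2(t/2)(2+\cos t)}{3h},\\
  \sigma_{\tilde M_2}(t) &= \tfrac{h}{60}\bigl(33+26\cos t+\cos 2t\bigr),\\
  \sigma_{S_2}(t) &= \tfrac1h\bigl(20-30\cos t+12\cos 2t-2\cos 3t\bigr) = \tfrac{64\sin^6(t/2)}{h},
\end{align*}
so that $A v_j = \sigma_A(t_j)\,v_j$ holds at every interior row of each $A\in\{\tilde K_2,\tilde M_2,S_2\}$.

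Next I would verify that the corner rows are exactly the ones under which $v_j$ remains an eigenvector across all $N$ rows. At the first two rows this reduces to checking that the modified entries ($\tfrac43,-\tfrac16,-\tfrac16$ for $\tilde K_2$; $\tfrac13,\tfrac5{24},\tfrac1{120}$ for $\tilde M_2$; $35,-21,7,-1$ for $S_2$) coincide with the interior stencil in which the ghost indices $k=0,-1$ have been eliminated via the reflection $v_0=-v_1$, $v_{-1}=-v_2$ satisfied by the DST-II vectors; by persymmetry the same then holds at rows $k=N-1,N$, where one uses $v_{N+1-k}=(-1)^{j+1}v_k$. For $\tilde K_2$ and $\tilde M_2$ this is the classical eigenstructure of the reflected uniform quadratic B-spline stiffness/mass matrices and is immediate. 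With all rows verified, the $v_j$ — being pairwise $\ell^2$-orthogonal and $N$ in number — form a complete eigenbasis that simultaneously diagonalises $\hat K_2-\eta S_2$ and $\tilde M_2$; hence $\hat U_j = c_j v_j$ (visibly independent of $\eta$) and
\begin{equation*}
  \hat\lambda_j^h = \frac{\sigma_{\tilde K_2}(t_j)-\eta\,\sigma_{S_2}(t_j)}{\sigma_{\tilde M_2}(t_j)},
\end{equation*}
which is well defined because $\tilde M_2$ is positive definite, so $\sigma_{\tilde M_2}(t_j)>0$. The closed form in~\eqref{eq:epp2} then follows by elementary trigonometry: using $1-\cos t=2\sin^2(t/2)$ and $48\sin^4(t/2)=18-24\cos t+6\cos 2t$ one gets $\sigma_{\tilde K_2}(t)-\eta\sigma_{S_2}(t)=\tfrac{4\sin^2(t/2)}{3h}\bigl(2-18\eta+(1+24\eta)\cos t-6\eta\cos 2t\bigr)$, and dividing by $\sigma_{\tilde M_2}(t)$ gives $\hat\lambda_j^h$ as stated.

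The step I expect to be the real obstacle is the corner verification for $S_2$. Unlike the classical mass/stiffness corners, it requires computing the $p$-th order derivative-jump stencil of the boundary-adapted quadratic B-spline (with its repeated end knot), checking that its autocorrelation, weighted by the $h^{2p-1}$ scaling of~\eqref{eq:bfs}, produces the entries $35,-21,7,-1$, and confirming that the \emph{doubling} of the boundary-face contribution prescribed for even $p$ in~\eqref{eq:bfs} is precisely what makes this corner compatible with the interior symbol $64\sin^6(t/2)=20-30\cos t+12\cos 2t-2\cos 3t$ under the ghost-node reflection. Everything else — the interior identities, the orthogonality and completeness of $\{v_j\}$, and the trigonometric bookkeeping — is routine. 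As a consistency check, $\sigma_{\tilde K_2}(t_j)-\eta\,\sigma_{S_2}(t_j)>0$ for $\eta\in[0,\eta_{\max})$, in agreement with the coercivity Theorem~\ref{thm:coe} and the bounds~\eqref{eq:bds}.
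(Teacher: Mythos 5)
Your proposal is correct and follows essentially the same route as the paper, whose proof is simply a citation of the Toeplitz-plus-Hankel eigenpair result in \cite[Thm.~2.2]{deng2021analytical}; your explicit DST-II diagonalization (interior symbols plus ghost-node reflection checks at the modified boundary rows, which do verify against the displayed corner entries $\tfrac43,-\tfrac16,-\tfrac16$, $\tfrac13,\tfrac5{24},\tfrac1{120}$ and $35,-21,7,-1$) is precisely the mechanism behind that cited theorem. The only cosmetic point is the $1/h$ scaling of $S_2$, which you correctly carry in $\sigma_{S_2}$ even though the paper's display of $S_2$ omits it.
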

\begin{proof}
  The result~\eqref{eq:epp2} follows from an application of \cite[Thm.~2.2]{deng2021analytical}.
\end{proof}

\begin{remark}[Sharpness of $\eta_{\max}$, $p=2$]
  The coercivity of the softIGA bilinear form requires all the eigenvalues to be positive. Thus, 
  $$
  \hat\lambda_j^h  = \frac{80\sin^2(\frac{t_j}{2})}{h^2}\frac{2 - 18\eta + (1+24\eta) \cos(t_j) -6 \eta \cos(2t_j)}{33 + 26\cos(t_j) + \cos(2t_j) } > 0
  $$
  for all $t_j \in(0, \pi]$, which boils down to finding $\eta$ such that
  $$
  \eta  < \frac{2+\cos(t_j)}{48\sin^4(\frac{t_j}{2})}
  $$
  for all $t_j \in(0, \pi]$. Thus, the minimum of the right-hand-side occurs when $t_j = \pi$. Thus, the final condition is $\eta < \frac{1}{48}.$ This coincides with $\eta_{\max} = \frac{1}{2C^2_{2,3}}$ as shown in Theorem~\ref{thm:coe}. Therefore, the value $\eta_{\max} = \frac{1}{48}$ is sharp.
\end{remark}

\begin{figure}[h!]
\includegraphics[height=5.2cm]{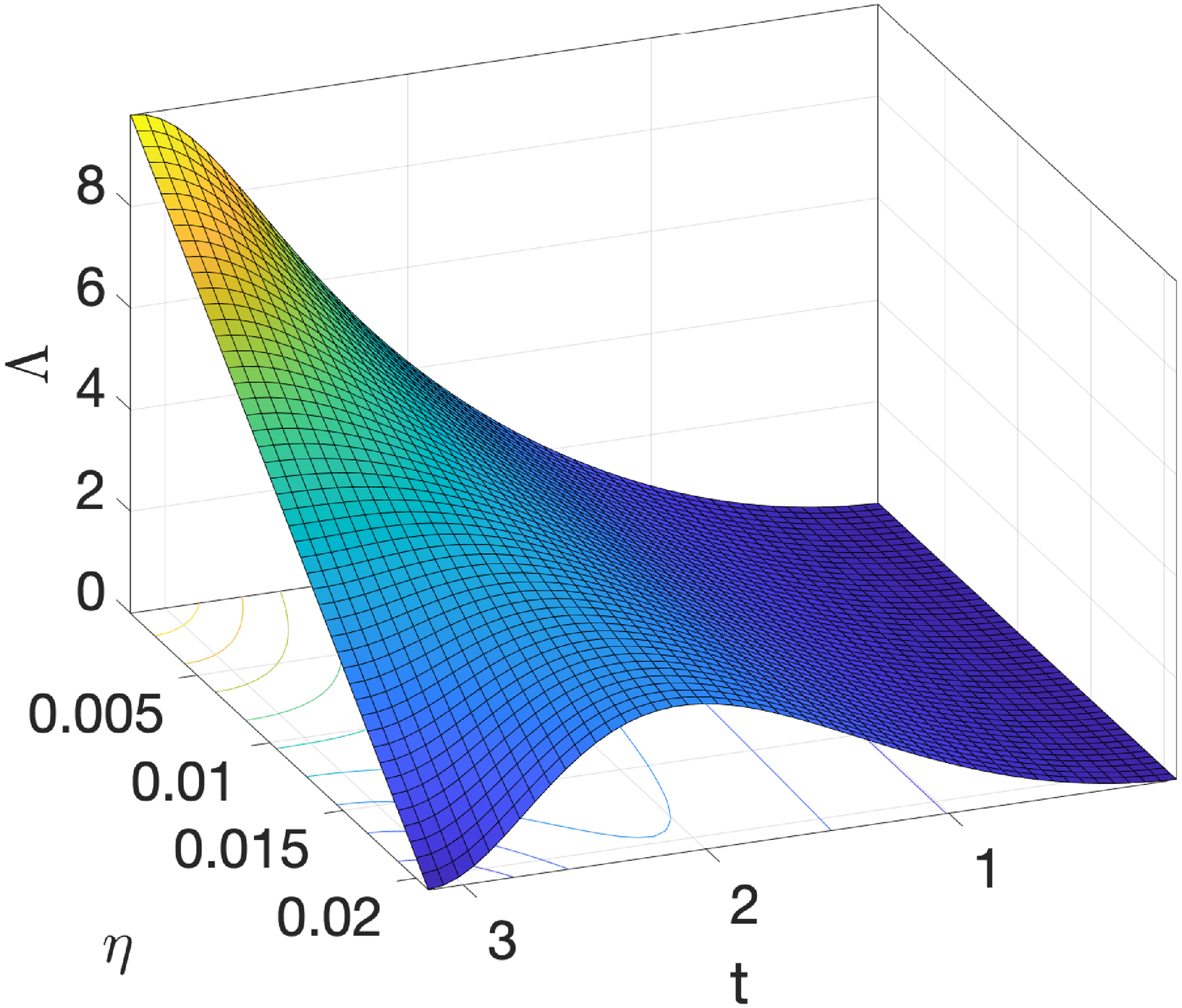} 
\includegraphics[height=5cm]{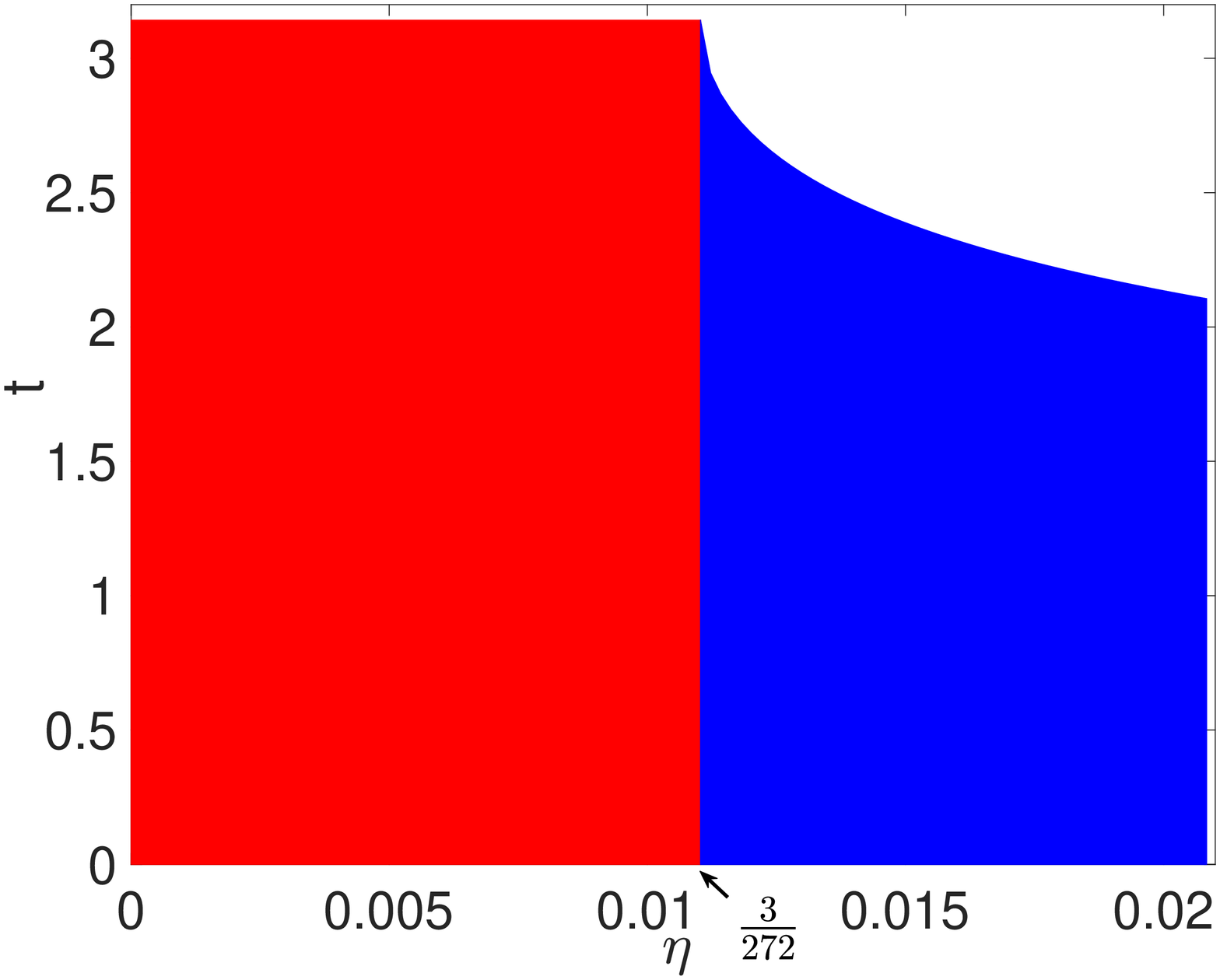} 
\caption{$C^1$-quadratic softIGA. Scaled eigenvalues $\Lambda = \hat\lambda_j^h h^2$ with respect to scaled eigenfrequency $t = j \pi h$ and $\eta$ (left plot) and the region where the approximate eigenvalue increases with respect to mode index (right plot).}
\label{fig:eigp2}
\end{figure}

We analyze the stiffness reduction by showing that the choice of $\eta$ impacts the eigenvalue ordering. For simplicity, we denote $\Lambda = \hat\lambda_j^h h^2$ and $t = t_j \in(0, \pi]$. Let $\eta \in [0, \frac{1}{48})$. Figure~\ref{fig:eigp2} shows the impact of the softness parameter $\eta$ on the distribution of the softIGA approximated eigenvalues.  The monotonicity with respect to $t$ changes for different values of $\eta$.  With this in mind, we choose the softness parameter as follows.

\begin{remark}[Choice of $\eta$, $p=2$]
  We first point out that the analytical eigenvectors given in Lemma~\ref{lem:epp2} do not depend on the softness parameter $\eta$.  The eigenvalues are monotonically increasing while $\eta \in (0, \frac{3}{272}]$.  For $\eta \in (\frac{3}{272}, \frac{1}{48}]$, the approximate eigenvalues are first increasing then decreasing, which is non-physical.  In physics, the modes with more oscillations have larger frequencies and eigenvalues.  For the approximate eigenvalues to be listed in ascending order and paired with the exact eigenvalues, we choose $\eta \in (0, \frac{3}{272}]$.  by default, for $C^1$-quadratic elements, we choose $\eta = \frac{3}{272}$.
\end{remark}

With the analytical eigenvalues given in~\eqref{eq:epp2}, one can derive the stiffness reduction ratio and the asymptotic stiffness reduction ratio and obtain
\begin{equation} 
  \rho^h_2 = \frac{2-18\eta + (1+24\eta)\cos(\pi h) - 6 \eta \cos(2\pi h)}{(1 - 48\eta )(2+ \cos(\pi h) },
  \qquad
  \rho_2 = \lim_{h \to 0} \rho^h_2 = \frac{1}{1-48\eta},
\end{equation}
where $\eta \in (0, \frac{3}{272}]$.  It is trivial to see that the ratio $\rho^h_2$ is increasing with respect to $\eta$ with $\eta \in (0, \frac{3}{272}]$.  Thus, the asymptotic maximum reduction ratio is when $\eta = \frac{3}{272}$:
\begin{equation} 
  \rho_2 = \frac{1}{1-48\cdot 3/272} = \frac{17}{8},
\end{equation}
 which leads to an asymptotic reduction percentage of 
\begin{equation} 
\varrho_2 = 100(1 - 1/\rho_2) \% \approx 52.9\%.
\end{equation}

\begin{theorem}[Eigenvalue optimal convergence and superconvergence, $p=2$] \label{thm:p2}
  Let $\lambda_j$ be the $j$-th exact eigenvalue of~\eqref{eq:pde} and let $\hat \lambda_j^h$ be the $j$-th approximate eigenvalue using $C^1$-quadratic softIGA with $N$ uniform elements on $[0,1]$.  Then the eigenvalue errors satisfy
  \begin{equation} \label{eq:errp2}
    \frac{ |\hat \lambda_j^h - \lambda_j|}{\lambda_j}  < \Big( \frac{37}{5,040} + \eta \Big) (j \pi h)^4, \qquad \forall j\in\{1,\ldots, N\}.
  \end{equation}
  Moreover, if $\eta = \frac{1}{720}$, the following holds:
  \begin{equation}
    \frac{ |\hat \lambda_j^h - \lambda_j|}{\lambda_j}  < \frac{1}{1,680} (j \pi h)^6, \qquad \forall j\in\{1,\ldots, N\}.
  \end{equation}
\end{theorem}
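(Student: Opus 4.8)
The plan is to substitute the analytic eigenvalue formula from Lemma~\ref{lem:epp2}, rewrite it with half-angle identities, and reduce both assertions to pointwise bounds on an explicit real-analytic function on $[0,\pi]$. Since the exact eigenvalues of~\eqref{eq:pde} on $[0,1]$ are $\lambda_j = (j\pi)^2$, we have $\lambda_j h^2 = t_j^2$ with $t_j = j\pi h \in (0,\pi]$. Using the identities $-18 + 24\cos t - 6\cos 2t = -48\sin^4(t/2)$ and $33 + 26\cos t + \cos 2t = 4\big(15 - 15\sin^2(t/2) + 2\sin^4(t/2)\big)$, the formula~\eqref{eq:epp2} becomes $\hat\lambda_j^h h^2 = \Lambda(t_j)$ with
\[
  \Lambda(t) := \frac{20\sin^2(t/2)\big(3 - 2\sin^2(t/2) - 48\eta\sin^4(t/2)\big)}{D(t)}, \qquad D(t) := 15 - 15\sin^2(t/2) + 2\sin^4(t/2).
\]
Writing $s = \sin^2(t/2) \in [0,1]$, the map $s \mapsto 15 - 15s + 2s^2$ is decreasing, so $2 \le D(t) \le 15$ on $[0,\pi]$; in particular $D$ never vanishes and $\Lambda$ is real-analytic there. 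Since the left-hand side of~\eqref{eq:errp2} equals $|\Lambda(t_j) - t_j^2|/t_j^2$, it suffices to bound $|\Lambda(t) - t^2|/t^2$ for every $t \in (0,\pi]$.

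Next I would write $\Lambda(t) - t^2 = \big(G_0(t) - 960\,\eta\sin^6(t/2)\big)/D(t)$, where $G_0(t) := 60\sin^2(t/2) - 40\sin^4(t/2) - t^2 D(t)$ is independent of $\eta$. A Taylor expansion at $t=0$ gives $G_0(t) = \tfrac1{48}t^6 - \tfrac1{1344}t^8 + O(t^{10})$ and $\sin^6(t/2) = \tfrac1{64}t^6 - \tfrac1{256}t^8 + O(t^{10})$, hence $\Lambda(t) - t^2 = \big(\tfrac1{720} - \eta\big)t^6 + \tfrac1{3360}t^8 + O(t^{10})$ vanishes to order at least $6$ at $t=0$ for every $\eta$. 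Consequently $f(t) := \big(\Lambda(t) - t^2\big)/t^6$ extends to an even real-analytic function on $[0,\pi]$ with $f(0) = \tfrac1{720} - \eta$, the relative error equals $|f(t)|\,t^4$, and~\eqref{eq:errp2} is equivalent to $|f(t)| < \tfrac{37}{5040} + \eta$ on $[0,\pi]$. I would then split $f = f_0 - \eta\,h_1$ with $f_0(t) := G_0(t)/\big(t^6 D(t)\big)$ ($\eta$-free, $f_0(0) = \tfrac1{720}$) and $h_1(t) := 960\sin^6(t/2)/\big(t^6 D(t)\big)$, and establish the two $\eta$-free pointwise estimates $0 < h_1(t) \le 1$ and $|f_0(t)| < \tfrac{37}{5040}$ on $[0,\pi]$. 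The triangle inequality $|f(t)| \le |f_0(t)| + \eta\,h_1(t)$ then closes the first claim (strictly, since $\tfrac1{720} = \tfrac{7}{5040} < \tfrac{37}{5040}$ handles $t$ near $0$). The estimate $h_1 \le 1$ amounts to $960\sin^6(t/2) \le t^6 D(t)$, which is sharp only as $t\to 0^+$; it therefore needs a Wilker/Huygens-type refinement of $\sin x \le x$ or a direct study of the critical points of $h_1$, rather than a crude bound. For $|f_0| < \tfrac{37}{5040}$ one certifies the uniform bound by partitioning $[0,\pi]$ (using, e.g., $D(t) \ge 15 - \tfrac{15}{4}t^2$ while this is positive and $D(t)\ge 2$ near $t=\pi$) together with monotonicity of $G_0(t)/t^6$, the slack between $\tfrac{37}{5040}$ and the asymptotic value $\tfrac1{720}$ absorbing the losses.

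Finally, for the superconvergence statement I would set $\eta = \tfrac1{720}$, so that $f(0) = 0$; by analyticity and evenness of $f$, the function $g(t) := f(t)/t^2 = \big(\Lambda(t) - t^2\big)/t^8$ extends analytically to $[0,\pi]$ with $g(0) = \tfrac1{3360}$ from the expansion above. The relative error now equals $|g(t)|\,t^6$, so it remains to prove $|g(t)| < \tfrac1{1680}$ on $[0,\pi]$; since $g(0) = \tfrac1{3360} < \tfrac1{1680}$, the factor $2$ of slack absorbs the variation of $g$, which is again checked by an elementary estimate on this explicit function.

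I expect the main obstacle to be precisely these uniform (non-asymptotic) bounds on $f_0$, $h_1$, and $g$ over the whole interval $[0,\pi]$: near $t=0$ they reduce to comparisons of explicit rationals, but certifying them away from the origin requires either a monotonicity analysis of the underlying trigonometric functions or careful control of Taylor remainders — and it is exactly to leave room for this that the stated constants $\tfrac{37}{5040}$ and $\tfrac1{1680}$ exceed the sharp asymptotic values $\tfrac1{720}$ and $\tfrac1{3360}$.
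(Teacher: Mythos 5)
Your proposal is correct and follows essentially the same route as the paper: substitute the analytical eigenvalues of Lemma~\ref{lem:epp2}, view the relative error as an explicit trigonometric function of a continuous variable $t\in(0,\pi]$, Taylor-expand at $t=0$ to recover the constants $\tfrac{1}{720}-\eta$ and $\tfrac{1}{3360}$, and close with an elementary uniform bound over $(0,\pi]$. Your decomposition $f=f_0-\eta h_1$ with $0<h_1\le 1$ is simply a more explicit organization of the step the paper compresses into ``analyzing the function monotonicity and extreme values,'' and it correctly isolates the one delicate uniform estimate.
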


\begin{proof}
  The exact eigenvalues for~\eqref{eq:pde} with $\Omega=[0,1]$ are $\lambda_j = (j \pi)^2$ and the approximate eigenvalues $\hat \lambda_j^h$ are given in~\eqref{eq:epp2}.  Since $h=1/N$, for $j=N$, it is easy to verify both inequalities.  In the view of dispersion error analysis, applying a Taylor expansion to $\hat \lambda_j^h$, we obtain (recall that $t_j:=j\pi h$)
  \begin{equation} \label{eq:dep2}
    \frac{ \hat \lambda_j^h - \lambda_j}{ \lambda_j} = \Big( \frac{1}{720} - \eta \Big) t_j^4 + \frac{1}{3,360}t_j^6 + \frac{1}{86,400}t_j^8+ \mathcal{O}(t_j^{10}),
  \end{equation} 
  which leads to the desired inequalities for small $h$. More rigorously, we first show the first inequality for arbitrary $j$. We calculate
  \begin{equation*}
    \frac{ | \hat \lambda_j^h - \lambda_j |}{ \lambda_j} = \left| \frac{80\sin^2(\frac{t_j}{2})}{t_j^2 }\frac{2 - 18\eta + (1+24\eta) \cos(t_j) -6 \eta \cos(2t_j)}{33 + 26\cos(t_j) + \cos(2t_j) } -1 \right|.
  \end{equation*} 
  Since $t_j = j\pi h, j=1,2,\cdots, N,$ samples the interval $(0,\pi]$, we can consider a continuous variable $t\in (0,\pi)$ and prove more generally that
  \[
    \left| \frac{80\sin^2(\frac{t}{2})}{t^2 }\frac{2 - 18\eta + (1+24\eta) \cos(t) -6 \eta \cos(2t)}{33 + 26\cos(t) + \cos(2t) } -1 \right| < \Big( \frac{37}{5,040} + \eta \Big) t^4,
  \]
  which can be proved by analyzing the function monotonicity and extreme values.  Similarly, we perform the analysis to show the second inequality, which completes the proof.
\end{proof}

\begin{remark}[Constant sharpness, extra-order superconvergence, and dispersion error, $p=2$]
  We have the following observations.
  \begin{itemize}
  \item The constants in the inequalities in Theorem~\ref{thm:p2} are not sharp.  Smaller constants are possible, but the proof is more involved.

  \item If we add the softness bilinear form with parameter $\eta_b$ to the mass bilinear form in~\eqref{eq:vfhhh}, the resulting matrix eigenvalue problem is of the form $(\hat K_2 - \eta S_2) \hat U = \hat \lambda^h (\tilde M_2 + \eta_b S_2)\hat U.$ When $\eta = \frac{1}{720}, \eta_b = \frac{1}{3,360}$, the relative eigenvalue error becomes
    \begin{equation} \label{eq:dep2new}
      \frac{ \hat \lambda_j^h - \lambda_j}{ \lambda_j} = \frac{1}{86,400}t_j^8+ \mathcal{O}(t_j^{10}),
    \end{equation}  
    which leads to a superconvergence of order $8$ (4 extra orders than the optimal convergence case).  This extra-order superconvergence can also be obtained by generalizing the stiffness and mass entries; see~\cite{ idesman2018use, idesman2020new} for details.  The study on superconvergence is not the focus of this work and is subject to future work.

  \item With the approximate eigenvalues given in~\eqref{eq:epp2}, the dispersion errors can be written in a similar form of~\eqref{eq:dep2}.  We provide more details in Section~\ref{sec:de} for the dispersion error analysis for OF-IGA and softIGA.

  \end{itemize}
\end{remark}

\subsection{$C^2$-cubic element}

For $C^2$-cubic splines with $N$ uniform elements on $\Omega=[0,1]$, the OF-IGA bilinear forms $a(\cdot, \cdot)$ and $b(\cdot, \cdot)$  lead to the following stiffness and mass matrices:
\begin{equation} \label{eq:mk3}
  \begin{aligned}
    \tilde{K}_3 & = \frac{1}{h}
    \begin{bmatrix}
      \frac{13}{15} & -\frac{7}{60} & -\frac{1}{5} & -\frac{1}{120} \\[0.2cm]
      -\frac{7}{60} & \frac{2}{3} & -\frac{1}{8} & -\frac{1}{5} & -\frac{1}{120}  \\[0.2cm]
      -\frac{1}{5} & -\frac{1}{8} & \frac{2}{3} & -\frac{1}{8} & -\frac{1}{5} & -\frac{1}{120}  \\[0.2cm]
      -\frac{1}{120}  & -\frac{1}{5} & -\frac{1}{8} & \frac{2}{3} & -\frac{1}{8} & -\frac{1}{5} & -\frac{1}{120}  \\[0.2cm]
      & \ddots & \ddots & \ddots & \ddots & \ddots & \ddots  \\[0.2cm]
    \end{bmatrix}_{(N-1)\times (N-1)}, \\
    \tilde{M}_3 &= h
    \begin{bmatrix}
      \frac{41}{90} & \frac{17}{72} & \frac{1}{42} & \frac{1}{5040} \\[0.2cm]
      \frac{17}{72} & \frac{151}{315} & \frac{397}{1,680} & \frac{1}{42} & \frac{1}{5,040}  \\[0.2cm]
      \frac{1}{42} & \frac{397}{1,680} & \frac{151}{315} &\frac{397}{1,680} & \frac{1}{42} & \frac{1}{5,040}  \\[0.2cm]
      \frac{1}{5,040}  & \frac{1}{42} & \frac{397}{1,680} & \frac{151}{315} & \frac{397}{1,680} & \frac{1}{42} & \frac{1}{5,040}  \\[0.2cm]
      & \ddots & \ddots & \ddots & \ddots & \ddots & \ddots  \\[0.2cm]
    \end{bmatrix}_{(N-1)\times (N-1)}, 
  \end{aligned}
\end{equation}
where the entries near the right boundary are such that the matrices are symmetric and persymmetric.  The softness bilinear form $s(\cdot, \cdot)$ leads to the following matrix
\begin{equation}
  S_3 = 
  \begin{bmatrix}
    42 & -48 & 27 & -8 & 1 \\
    -48 & 69 & -56 & 28 & -8 & 1 \\
    27 & -56 & 70 & -56 & 28 & -8 & 1 \\
    -8 & 28 & -56 & 70 & -56 & 28 & -8 & 1 \\
    1 & -8 & 28 & -56 & 70 & -56 & 28 & -8 & 1 \\
    & \ddots & \ddots & \ddots & \ddots & \ddots & \ddots & \ddots \\
  \end{bmatrix}_{(N-1)\times (N-1)},
\end{equation}
where the entries near the right boundary are such that the matrix is symmetric and persymmetric.  As in the $C^1$-quadratic case, all these matrices are Toeplitz-plus-Hankel matrices, and the analytical eigenpairs can be derived by following~\cite[\S 2.2]{ deng2021analytical}.

\begin{lemma}[Analytical eigenvalues and eigenvectors, $p=3$] \label{lem:epp3}
  For $C^2$-cubic softIGA with $N$ uniform elements on $[0,1]$.  The GMEVP~\eqref{eq:mevppp} is $(\hat K_3 - \eta S_3) \hat U = \hat \lambda^h \tilde M_3 \hat U.$ Its eigenpairs are $(\hat{\lambda}_j^h, \hat{U}_j)$ for all $j\in\{1,\ldots,N-1\}$ where
  \begin{equation} \label{eq:epp3}
    \begin{aligned}
      \hat\lambda_j^h & = \frac{168\sin^2(\frac{t_j}{2})}{h^2}\frac{33 - 1,200 \eta + 2 (13 + 900 \eta) \cos(t_j) + (1- 720 \eta) \cos(2t_j) + 120 \eta \cos(3t_j)}{1,208 + 1,191\cos(t_j) + 120\cos(2t_j) + \cos(3t_j)},  \\
      \hat{U}_{j} & = c_j \big(\sin(k t_j)\big)_{k \in\{1,\ldots, N-1\}}
    \end{aligned}
  \end{equation}
  with $t_j:=j \pi h$ and some normalization constant $c_j> 0$.
\end{lemma}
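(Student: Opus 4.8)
The plan is to mirror the proof of Lemma~\ref{lem:epp2}: recognize that the matrices $\tilde K_3$, $\tilde M_3$, and $S_3$ are all symmetric, banded, Toeplitz-plus-Hankel matrices with a common structure, so that the machinery of~\cite[\S 2.2]{deng2021analytical} applies simultaneously to all three. Concretely, for $C^2$-cubic splines with maximal continuity, a basis function is associated with an internal mesh node $x_k$, and the appropriate Bloch-wave ansatz (cf. the Remark on the $p$-dependence of $s(\cdot,\cdot)$ and~\cite{hughes2014finite}) is $\hat U_j = c_j\big(\sin(k t_j)\big)_{k=1}^{N-1}$ with $t_j = j\pi h$. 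First I would verify that this vector is a common eigenvector of $\tilde K_3$, $\tilde M_3$, and $S_3$ by the standard Toeplitz-plus-Hankel eigenvector argument: the Toeplitz (interior) part acts on $\sin(kt_j)$ by producing a real symbol (a cosine polynomial in $t_j$), and the Hankel correction, together with the modified boundary rows dictated by the OF-IGA reconstruction and the even/odd convention in $s(\cdot,\cdot)$, is exactly what makes the boundary equations consistent with the same sinusoidal ansatz. This is the content of~\cite[Thm.~2.2]{deng2021analytical}, which I would invoke directly.

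Granting that $\hat U_j$ is a common eigenvector, the eigenvalue of $\hat K_3 - \eta S_3$ relative to $\tilde M_3$ is just the ratio of the corresponding scalar symbols. So the next step is to compute the three symbols. For a symmetric banded Toeplitz-plus-Hankel matrix with first-row stencil $(a_0, a_1, a_2, \ldots)$, the associated eigenvalue on the $\sin(kt)$ mode is $a_0 + 2\sum_{m\ge 1} a_m \cos(m t)$ (with the Hankel part and boundary modifications arranged so no extra terms survive). Reading off the stencils: for $\tilde K_3$ the interior row is $\tfrac{1}{h}(\tfrac{2}{3}, -\tfrac{1}{8}, -\tfrac{1}{5}, -\tfrac{1}{120})$, for $\tilde M_3$ it is $h(\tfrac{151}{315}, \tfrac{397}{1680}, \tfrac{1}{42}, \tfrac{1}{5040})$, and for $S_3$ it is $(70, -56, 28, -8, 1)$. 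Thus the symbols are, up to the overall $h^{\pm 1}$ and a common factor,
\begin{align*}
\sigma_K(t) &= \tfrac{1}{h}\Big(\tfrac{2}{3} - \tfrac{1}{4}\cos t - \tfrac{2}{5}\cos 2t - \tfrac{1}{60}\cos 3t\Big),\\
\sigma_M(t) &= h\Big(\tfrac{151}{315} + \tfrac{397}{840}\cos t + \tfrac{1}{21}\cos 2t + \tfrac{1}{2520}\cos 3t\Big),\\
\sigma_S(t) &= 70 - 112\cos t + 56\cos 2t - 16\cos 3t.
\end{align*}
Then $\hat\lambda_j^h = (\sigma_K(t_j) - \eta\,\sigma_S(t_j))/\sigma_M(t_j)$, and the remaining task is a trigonometric rearrangement — pulling out the factor $\sin^2(t_j/2)$ from numerator and denominator (using $1-\cos t = 2\sin^2(t/2)$ and its higher harmonics) and collecting the $\eta$-terms — to bring the expression into the stated closed form with the prefactor $\tfrac{168\sin^2(t_j/2)}{h^2}$ and the two cosine polynomials $33 + 26\cos t_j + \cos 2t_j + (\text{the }\eta\text{-dependent piece})$ in the numerator and $1208 + 1191\cos t_j + 120\cos 2t_j + \cos 3t_j$ in the denominator.

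I expect the main obstacle to be bookkeeping at the boundary rows rather than anything conceptual: one must check that the first two (and, by persymmetry, the last two) rows of each of $\tilde K_3 - \eta S_3$ and $\tilde M_3$, which differ from the interior Toeplitz pattern because of the OF-IGA basis reconstruction near $\partial\Omega$, still satisfy the eigen-equation with $\hat U_j$ and the same eigenvalue. This is precisely the hypothesis-checking needed to legitimately apply~\cite[Thm.~2.2]{deng2021analytical}, and it is where the specific numerical entries of the boundary rows matter; the interior symbol computation and the final trigonometric simplification are then routine. The normalization constant $c_j>0$ is fixed by $\|\hat u_j^h\|_\Omega = 1$, i.e. $c_j^2\,\hat U_j^\top \tilde M_3 \hat U_j = 1$, and its explicit value is immaterial to the statement. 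As in Lemma~\ref{lem:epp2}, the proof therefore reduces to a single citation:

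\begin{proof}
The result~\eqref{eq:epp3} follows from an application of \cite[Thm.~2.2]{deng2021analytical}.
\end{proof}
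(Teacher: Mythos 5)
Your overall route is exactly the paper's: recognize the Toeplitz-plus-Hankel structure of $\tilde K_3$, $\tilde M_3$, $S_3$, invoke the analytical-eigenpair results of~\cite{deng2021analytical} with a sinusoidal ansatz, and read the eigenvalue off as a ratio of cosine symbols (the paper's proof is likewise a one-line citation). However, two concrete slips need fixing. First, you cite the wrong theorem: your own ansatz $\hat U_j = c_j(\sin(kt_j))_k$ is the nodal ($x_k=kh$) pattern that the paper associates with \emph{odd} $p$, and the corresponding result is \cite[Thm.~2.1]{deng2021analytical}; Thm.~2.2 is the one for the even-$p$ pattern with eigenvectors $\sin((k-\tfrac12)t_j)$ used in Lemmas~\ref{lem:epp2} and~\ref{lem:epp4}. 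Since your written proof consists solely of that citation, as it stands it invokes a theorem whose eigenvector structure does not match the cubic case.

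Second, your symbol for $S_3$ is wrong: the interior stencil is $(1,-8,28,-56,70,-56,28,-8,1)$, i.e.\ it has half-bandwidth $4$, so
\begin{equation*}
\sigma_S(t) = 70 - 112\cos t + 56\cos 2t - 16\cos 3t + 2\cos 4t = 256\sin^8\big(\tfrac{t}{2}\big),
\end{equation*}
whereas you dropped the $2\cos 4t$ contribution from the outermost entry $a_4=1$ (even though you listed it in the stencil). The identity $\sigma_S = 256\sin^8(t/2)$ is precisely what makes the $\eta$-dependent part of the numerator collapse to $-\eta\,(1200 - 1800\cos t_j + 720\cos 2t_j - 120\cos 3t_j) = -3840\,\eta\sin^6(t_j/2)$, matching~\eqref{eq:epp3}; with your truncated symbol the trigonometric rearrangement you describe would not reproduce the stated closed form (a spurious $\cos 4t_j$ term survives). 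Your $\sigma_K$ and $\sigma_M$ are correct, and your remark that the real content of applying the theorem is checking the modified boundary rows (here the first/last two rows, by persymmetry) is well placed; with the citation corrected to Thm.~2.1 and the $S_3$ symbol repaired, the argument goes through as in the paper.
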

\begin{proof}
  This can be proved by an application of~\cite[Thm.~2.1]{ deng2021analytical}.
\end{proof}

\begin{remark}[Sharpness of $\eta_{\max}$, $p=3$]
  Similarly as in the $C^1$-quadratic case, we solve for $\eta$ such that $ \hat\lambda_j^h > 0 $ for all $t_j \in(0, \pi]$, which boils down to finding $\eta$ such that
$$
\eta  < \frac{33+26\cos(t_j) + \cos(2t_j) }{3,840\sin^6(\frac{t_j}{2})}
$$
for all $t_j \in(0, \pi]$. The right-hand-side term is monotonically decreasing with respect to $t_j$ and the minimum is reached when $t_j = \pi$.  Thus, the condition for positive eigenvalue (coercivity) is $\eta < \frac{1}{480}.$
\end{remark}

\begin{figure}[h!]
\includegraphics[height=5.2cm]{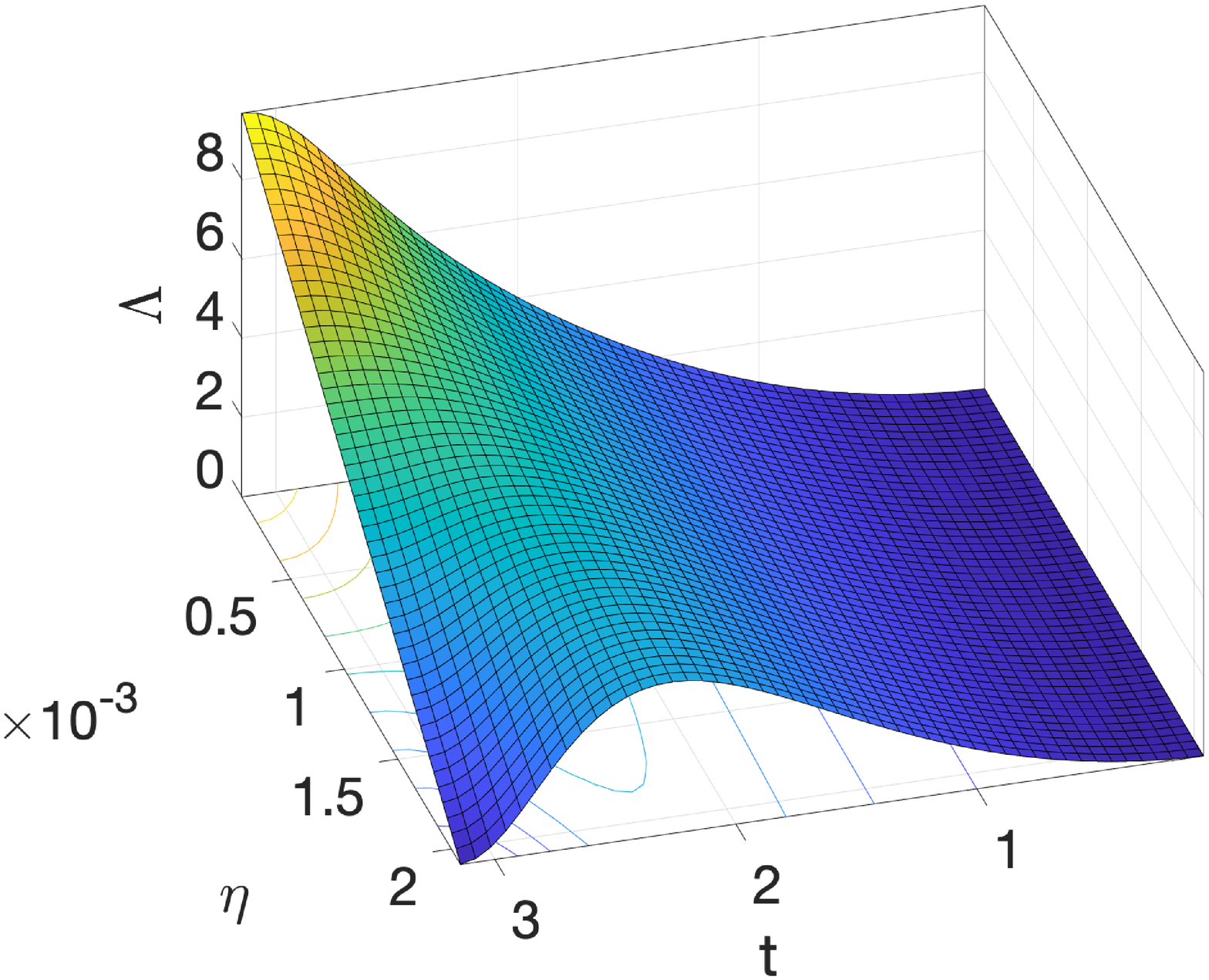} 
\includegraphics[height=5cm]{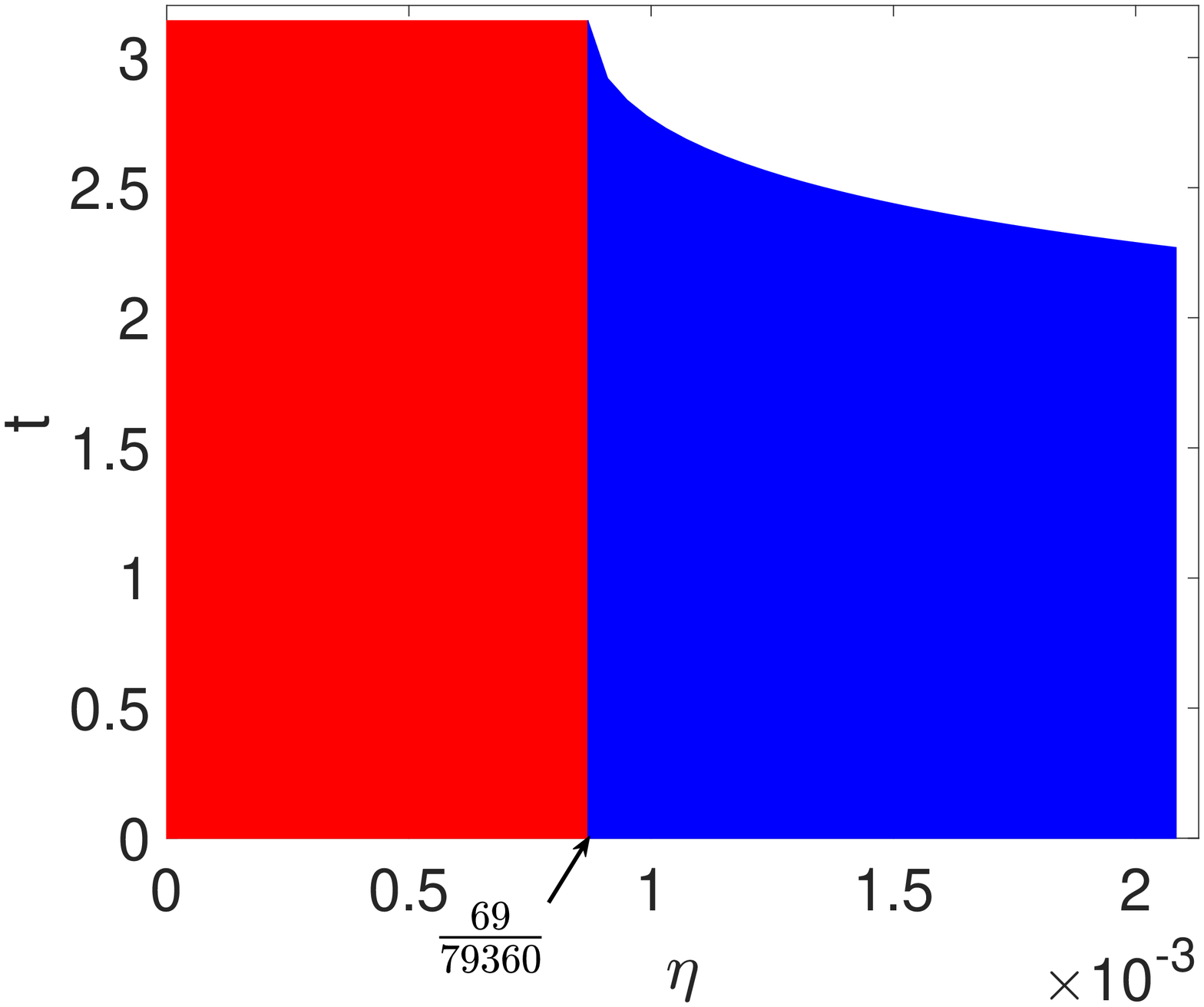} 
\caption{$C^2$-cubic softIGA. Scaled eigenvalues $\Lambda = \hat\lambda_j^h h^2$ with respect to scaled eigenfrequency $t = j \pi h$ and $\eta$ (left plot) and the region where the approximate eigenvalue increases with respect to mode index (right plot).}
\label{fig:eigp3}
\end{figure}

Similarly, the softness parameter impacts the monotonicity of the approximate eigenvalues. Figure~\ref{fig:eigp3} shows that the approximate eigenvalues are monotonically increasing when $\eta \in (0, \frac{69}{79,360}]$. For $\eta \in (\frac{69}{79,360}, \frac{1}{480}]$, the approximate eigenvalues are first increasing then decreasing, which is non-physical. We set $\eta \in (0, \frac{69}{79,360}]$.  By default, for maximal stiffness reduction, we choose $\eta = \frac{69}{79,360}$.  There are outliers in the spectra for cubic and higher-order elements, and exact matrix eigenvalues are unknown. The stiffness reduction ratio depends on the eigenvalues of IGA matrices. We present the reduction ratios numerically in Section~\ref{sec:num}.

\begin{theorem}[Eigenvalue optimal convergence and superconvergence, $p=3$] \label{thm:p3}
  Let $\lambda_j$ be the $j$-th exact eigenvalue of~\eqref{eq:pde} and let $\hat \lambda_j^h$ be the $j$-th approximate eigenvalue using $C^2$-cubic softIGA with $N$ uniform elements on $[0,1]$.  Then the eigenvalue errors satisfy
  \begin{equation} \label{eq:errp3}
    \frac{ |\hat \lambda_j^h - \lambda_j|}{\lambda_j}  < \Big( \frac{131}{332,640} + \eta \Big) (j \pi h)^6, \qquad \forall j\in\{1,\ldots, N-1\}.
  \end{equation}
  Moreover, if $\eta = \frac{1}{30,240}$. The following holds:
  \begin{equation}
    \frac{ |\hat \lambda_j^h - \lambda_j|}{\lambda_j}  < \frac{1}{27,720} (j \pi h)^8, \qquad \forall j\in\{1,\ldots, N-1 \}.
  \end{equation}
\end{theorem}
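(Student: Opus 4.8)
The plan is to mirror the proof of Theorem~\ref{thm:p2}. The exact eigenvalues of~\eqref{eq:pde} on $\Omega=[0,1]$ are $\lambda_j=(j\pi)^2$, and the $C^2$-cubic softIGA eigenvalues $\hat\lambda_j^h$ are given in closed form by Lemma~\ref{lem:epp3}. Since $h=1/N$ and the index set is $\{1,\ldots,N-1\}$, the scaled frequencies $t_j:=j\pi h$ range over $(0,\pi)$, so — unlike the quadratic case — there is no need to treat $t=\pi$ separately. First I would form the relative error
\begin{equation*}
\frac{\hat\lambda_j^h-\lambda_j}{\lambda_j}
=\frac{168\sin^2(t_j/2)}{t_j^2}\cdot\frac{33-1{,}200\eta+2(13+900\eta)\cos t_j+(1-720\eta)\cos 2t_j+120\eta\cos 3t_j}{1{,}208+1{,}191\cos t_j+120\cos 2t_j+\cos 3t_j}-1
\end{equation*}
and split it, exactly as for $p=2$, as $\big(\phi_0(t_j)-1\big)+\eta\,\phi_1(t_j)$, where $\phi_0$ carries the $\eta$-independent part (numerator $33+26\cos t+\cos 2t$) and $\phi_1$ is the coefficient of $\eta$.

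A Taylor expansion about $t=0$ then gives $\phi_0(t)-1=\tfrac{1}{30{,}240}t^6+c_8t^8+c_{10}t^{10}+\mathcal{O}(t^{12})$ and $\phi_1(t)=-t^6+\mathcal{O}(t^8)$, with explicit rational constants $c_8,c_{10}$ (the same ones appearing in the dispersion analysis of Section~\ref{sec:de}); the vanishing of the low-order terms of $\phi_1$ follows from the same cancellations already used for $p=2$. This expansion exhibits the $h^{2p}=h^6$ optimal rate and identifies $\eta=\tfrac{1}{30{,}240}$ as the value annihilating the leading error term, which produces the $h^8$ superconvergence.

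To convert the asymptotic statement into the claimed bound valid for every admissible $j$, I would replace the sampled values $t_j$ by a continuous variable $t\in(0,\pi)$ and prove the pointwise inequality $\big|(\phi_0(t)-1)+\eta\,\phi_1(t)\big|<\big(\tfrac{131}{332{,}640}+\eta\big)t^6$. Writing $\tfrac{131}{332{,}640}=\tfrac{1}{30{,}240}+\tfrac{1}{2{,}772}$, applying the triangle inequality to $(\phi_0-1+\eta\phi_1)$ split as $\big(\tfrac{1}{30{,}240}-\eta\big)t^6+\big(\phi_0-1-\tfrac{1}{30{,}240}t^6\big)+\eta\big(\phi_1+t^6\big)$, and using $\big|\tfrac{1}{30{,}240}-\eta\big|\le\tfrac{1}{30{,}240}+\eta$, it suffices to show
\begin{equation*}
\frac{\big|\phi_0(t)-1-\tfrac{1}{30{,}240}t^6\big|}{t^6}+\eta\,\frac{\big|\phi_1(t)+t^6\big|}{t^6}\le\frac{1}{2{,}772},\qquad t\in(0,\pi).
\end{equation*}
Both quotients extend continuously by $0$ at $t=0$: near $t=0$ they are controlled by the Taylor remainder above, while for $t$ bounded away from $0$ the trigonometric numerators and the denominator $1{,}208+1{,}191\cos t+120\cos 2t+\cos 3t$ (strictly positive on $(0,\pi)$) yield crude explicit bounds, reducing the estimate to a one-variable calculus check of the critical points. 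Since $\eta$ lies in the monotone/coercive range ($\eta\le\tfrac{69}{79{,}360}$ by the preceding remark) and the error is affine in $\eta$, uniformity in $\eta$ is automatic once the endpoint values are verified. The superconvergence claim is handled identically: substituting $\eta=\tfrac{1}{30{,}240}$ removes the $t^6$ term, leaving a quantity of order $t^8$, and the same monotonicity/extremum argument applied to $\big|(\phi_0(t)-1)+\tfrac{1}{30{,}240}\phi_1(t)\big|/t^8$ over $(0,\pi)$ yields the constant $\tfrac{1}{27{,}720}$.

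The main obstacle is the pointwise trigonometric inequality over the full interval $(0,\pi)$: although conceptually routine, bounding a ratio of degree-three trigonometric polynomials multiplied by $\sin^2(t/2)/t^2$ and stitching together the small-$t$ Taylor regime with the bounded-away-from-zero regime requires explicit, deliberately non-sharp constants — which is precisely why $\tfrac{131}{332{,}640}$ and $\tfrac{1}{27{,}720}$ are not claimed to be optimal. A secondary point is purely bookkeeping: the triangle-inequality split automatically covers both the upper and lower envelopes of $(\phi_0-1)+\eta\phi_1$, so no separate two-sided analysis is needed, but one must still confirm that the denominator stays bounded below on $(0,\pi)$ so that the crude estimates on the complement of a neighborhood of the origin are valid.
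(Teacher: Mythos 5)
Your overall strategy is exactly the paper's: the paper proves Theorem~\ref{thm:p3} by literally ``following the derivations for $C^1$ quadratics,'' i.e.\ insert the closed-form eigenvalues of Lemma~\ref{lem:epp3}, Taylor-expand to read off the leading coefficient $\tfrac{1}{30,240}$ (and hence the superconvergent choice of $\eta$), then pass to a continuous variable $t\in(0,\pi)$ and bound the resulting trigonometric ratio by studying its monotonicity and extrema. Your splitting into an $\eta$-independent part $\phi_0$ and the $\eta$-coefficient $\phi_1$, your identification of the expansions (consistent with~\eqref{eq:des}), and your observation that $t=\pi$ needs no separate treatment for $p=3$ are all correct and in the spirit of the paper's argument.

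However, the specific reduction you propose is too lossy to close the argument on the whole admissible $\eta$-range. Writing $\tfrac{131}{332,640}=\tfrac{1}{30,240}+\tfrac{1}{2,772}$ and symmetrizing $\bigl|\tfrac{1}{30,240}-\eta\bigr|\le\tfrac{1}{30,240}+\eta$, you claim it suffices to show $\bigl|\phi_0(t)-1-\tfrac{t^6}{30,240}\bigr|/t^6+\eta\,\bigl|\phi_1(t)+t^6\bigr|/t^6\le\tfrac{1}{2,772}$ on $(0,\pi)$. This intermediate inequality is false for the larger admissible softness parameters: at $t=\pi$ one has $\phi_0(\pi)-1\approx 1.29\cdot10^{-3}$, $\phi_1(\pi)=-645{,}120/(136\,\pi^2)\approx-480.6$, so the two quotients are about $3.2\cdot10^{-5}$ and $0.50$ respectively; with the paper's default $\eta=\tfrac{69}{79,360}\approx 8.7\cdot10^{-4}$ the left-hand side is about $4.7\cdot10^{-4}>\tfrac{1}{2,772}\approx3.6\cdot10^{-4}$ (the failure persists for all $\eta\gtrsim 6.6\cdot10^{-4}$, hence also near $\eta_{\max}=\tfrac{1}{480}$), even though the theorem's inequality itself holds comfortably there because the signed term $\bigl(\tfrac{1}{30,240}-\eta\bigr)t^6$ and $\eta\phi_1$ partially cancel. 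The repair is essentially the remark you already make but should promote to the main argument: the error is affine in $\eta$, so $|{\rm error}(t,\eta)|-\bigl(\tfrac{131}{332,640}+\eta\bigr)t^6$ is convex in $\eta$ and its maximum over the admissible interval is attained at the endpoints $\eta=0$ and $\eta=\eta_{\max}$; verifying the original inequality directly at these two endpoints (by the extremum analysis over $t$) avoids the triangle-inequality loss. With that adjustment your proof coincides with the paper's intended argument; as written, the ``it suffices'' step would not establish~\eqref{eq:errp3} for the default choice of $\eta$.
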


\begin{proof}
  We establish the inequalities by following the derivations for $C^1$ quadratics.
\end{proof}

\begin{remark}
  For $\eta = \frac{1}{30,240}, \eta_b = \frac{1}{60,480}$, we obtain four extra superconvergent orders on the eigenvalue errors, that is,
  \begin{equation}
\frac{ |\hat \lambda_j^h - \lambda_j|}{\lambda_j}  = \frac{1}{532,224} (j \pi h)^{10} + \mathcal{O}((j \pi h)^{12}).
\end{equation}
\end{remark}

\subsection{$C^3$-quartic and higher-oder elements}

For $C^3$-quartic splines with $N$ uniform elements on $\Omega=[0,1]$, we list the softIGA matrices in~\ref{app:p4}.  The internal entries of the softness matrix $S_p$ are parts of Yang-Hui's triangle (see, for example,~\cite{ weisstein2002crc}; also called Pascal's triangle) with the signs of the entries alternating.  This section presents the main results for $C^3$-quartic elements and discusses the extension to higher-order elements.
\begin{lemma}[Analytical eigenvalues and eigenvectors, $p=4$] \label{lem:epp4}
  For $C^3$-quartic softIGA with $N$ uniform elements on $[0,1]$.  The GMEVP~\eqref{eq:mevppp} is $(\hat K_4 - \eta S_4) \hat U = \hat \lambda^h \tilde M_4 \hat U.$ The eigenpairs are $(\hat{\lambda}_j^h, \hat{U}_j)$ for all $j\in\{1,\ldots,N\}$ where
  \begin{equation} \label{eq:epp4}
    \begin{aligned}
      \hat\lambda_j^h & = \frac{288\sin^2(\frac{t_j}{2})}{h^2}\frac{1,208 - 176,400\eta + 3(397 + 94,080\eta) \cos(t_j)  + \Lambda_\eta }{78,095 + 88,234 \cos(t_j)  + 14,608 \cos(2t_j) + 502 \cos(3t_j)  + \cos(4t_j)},  \\
      \Lambda_\eta & = 120(1-1,176\eta) \cos(2t_j) + (1+40,320\eta) \cos(3t_j)  - 5,040\eta \cos(4t_j), \\
      \hat{U}_{j} & = c_j \big(\sin((k-1/2)t_j)\big)_{k \in\{1,\ldots, N\}}
    \end{aligned}
  \end{equation}
  with $t_j:=j \pi h$ and some normalization constant $c_j> 0$.
\end{lemma}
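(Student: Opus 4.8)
The plan is to reproduce, for $p=4$, the argument behind Lemma~\ref{lem:epp2} and Lemma~\ref{lem:epp3}: reduce the statement to the structural eigenpair theorem for Toeplitz-plus-Hankel matrix pencils of \cite[Thm.~2.2]{deng2021analytical}. First I would assemble the three $N\times N$ matrices: the OF-IGA stiffness matrix $\tilde K_4$ and mass matrix $\tilde M_4$ (listed in~\ref{app:p4}), and the softness matrix $S_4$, whose interior rows are, up to alternating signs, row $2p+2$ of Yang-Hui's (Pascal's) triangle, with the first and last few rows modified so that $\tilde K_4$, $\tilde M_4$, and $S_4$ are each symmetric and persymmetric. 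The crucial point is that, thanks to the extra boundary conditions $\mathcal L^\alpha(w)|_{\partial\Omega}=0$, $\alpha=0,1$, defining $\tilde V^h_4$ (cf.\ Figure~\ref{fig:c3p4n10}) and to the factor $2$ on the boundary faces $\mathcal F_b$ in the definition~\eqref{eq:bfs} of $s(\cdot,\cdot)$ for even $p$, each of these matrices falls into the class to which \cite[Thm.~2.2]{deng2021analytical} applies; hence they are simultaneously diagonalized by the common eigenvector family $\hat U_j = c_j\big(\sin((k-1/2)t_j)\big)_{k}$, $t_j=j\pi h$ — the even-$p$ ansatz dictated by the mesh-midpoint Bloch-wave assumption recalled in the remark on the $p$-dependence of $s(\cdot,\cdot)$.

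Next I would compute, for fixed $j$ and with $c:=\cos t_j$, the symbols of the three Toeplitz parts: $\sigma_{\tilde K_4}(t_j)$ and $\sigma_{\tilde M_4}(t_j)$ are trigonometric polynomials of degree $p=4$ in $c$, while the alternating-binomial structure of $S_4$ forces $\sigma_{S_4}(t_j)$ to be proportional to $\sin^{2p+2}(t_j/2)$, reflecting that $S_4$ is, up to scaling, a $(p+1)$-fold symmetric discrete difference — which is precisely why the $\eta$-dependent terms enter $\hat\lambda_j^h$ in the combinations collected into $\Lambda_\eta$. Acting with $\tilde K_4-\eta S_4$ and with $\tilde M_4$ on the ansatz vector multiplies it entrywise, on the interior rows, by $\sigma_{\tilde K_4}(t_j)-\eta\sigma_{S_4}(t_j)$ and by $\sigma_{\tilde M_4}(t_j)$ respectively; the content of the cited theorem is that the finitely many modified boundary rows reproduce the same scalar multiple, which pins the admissible frequencies to $t_j=j\pi h$ for $j\in\{1,\ldots,N\}$. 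The eigenvalue is then the quotient
\[
\hat\lambda_j^h \;=\; \frac{\sigma_{\tilde K_4}(t_j)-\eta\,\sigma_{S_4}(t_j)}{\sigma_{\tilde M_4}(t_j)},
\]
and a routine trigonometric simplification — expanding $\cos(2t_j),\cos(3t_j),\cos(4t_j)$ in powers of $c$, collecting terms, and factoring out the common $288\sin^2(t_j/2)/h^2$ — yields the closed form~\eqref{eq:epp4} with the $\eta$-polynomial $\Lambda_\eta$ in the numerator and the degree-four denominator shown there.

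The main obstacle I anticipate is not the interior-symbol bookkeeping (mechanical, though heavy on large integers) but verifying that the boundary-modified corner blocks of all three matrices genuinely fit the Toeplitz-plus-Hankel template of \cite[Thm.~2.2]{deng2021analytical}; this is the step that actually uses the OF-IGA basis reconstruction at $p=4$ together with the factor $2$ on $\mathcal F_b$. A secondary, purely computational, difficulty is carrying the large constants through the simplification without error. I would guard against both by two checks: setting $\eta=0$ must recover the known OF-IGA quartic eigenvalues, and a Taylor expansion as $t_j\to0$ must give $\hat\lambda_j^h h^2 \to t_j^2$, consistent with the consistency and coercivity of the scheme established in Theorem~\ref{thm:coe}.
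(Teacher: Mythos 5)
Your proposal is correct and follows essentially the same route as the paper: the paper's proof is precisely an application of \cite[Thm.~2.2]{deng2021analytical} (the even-$p$ Toeplitz-plus-Hankel eigenpair result), with the half-integer Bloch ansatz and the symbol quotient $\hat\lambda_j^h=\bigl(\sigma_{\tilde K_4}(t_j)-\eta\,\sigma_{S_4}(t_j)\bigr)/\sigma_{\tilde M_4}(t_j)$ that you spell out. Your write-up simply makes explicit the structural verification and trigonometric bookkeeping that the paper leaves implicit in its one-line citation.
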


The proof applies~\cite[Thm.~2.2]{deng2021analytical} for the $C^1$-quadratic element case.  From Lemma~\ref{lem:epp4}, for all eigenvalues to be positive (coercivity), one can derive the condition for the softness parameter $\eta \in (0, \eta_{\max})$ with $\eta_{\max} = \frac{17}{80,640}$.  The eigenvalue monotonicity behaves similarly as Figure~\ref{fig:eigp2} for $C^1$-quadratic elements or Figure~\ref{fig:eigp3} for $C^2$-cubic elements.  For $\eta \in (0, \frac{451}{6,191,360}]$, the eigenvalues are monotonically increasing. Similarly, we default to $\eta = \frac{451}{6,191,360}$ for $C^3$-quartic elements. The relative eigenvalue errors converge optimally
\begin{equation} \label{eq:errp4}
\frac{ |\hat \lambda_j^h - \lambda_j|}{\lambda_j}  = C_{\eta} (j \pi h)^{8} + \mathcal{O}((j \pi h)^{12}),
\end{equation}
where $C_{\eta}$ is a positive constant that is independent of mesh size $h$ and index $j$.  The superconvergence occurs when $\eta = \frac{1}{1,209,600}$.

For higher-order splines ($p\ge 5$) with uniform elements on $\Omega=[0,1]$, for odd $p$, the resulting matrices have the pattern of~\cite[Thm.~2.1]{ deng2021analytical} while for even $p$, the resulting matrices follow the pattern of~\cite[Thm.~2.2]{ deng2021analytical}.  
The analytical eigenpairs can then be derived accordingly.  
With the analytical eigenpairs, one can then derive the range of parameter $\eta\in [0, \eta_{\max})$ for coercivity. 
The default choice reduces the condition numbers and allows the eigenvalues to be sorted in an ascending order.  
In general, $\eta_{\max}$ depends on $p$ and continuity order $k$. 
The default $\eta$ and $\eta_{\max}$ decrease as $p$ increases.
Once we obtain the analytical eigenvalues, we analyze the positivity for the matrices and eigenvalue errors.  The dispersion errors can then be derived as a by-product. For the eigenfunctions, since the entries of the eigenvectors are exact values of the true solutions, one expects optimal eigenfunction errors as in~\eqref{eq:ee} due to the interpolation theory (cf.,~\cite{ ern2021, ciarlet1978finite}).

\section{Dispersion error analysis of softIGA} \label{sec:de}

In this section, we first introduce the matrix commutator and demonstrate that the matrix commutativity is required to justify the use of Bloch wave assumption~\cite{ bloch1928quantum, kittel2018introduction} for dispersion analysis.  Thus, the lack of commutating property in IGA creates outliers in the IGA element's spectra with $p\ge 3$.  We also establish the exact spectral errors in the view of dispersion analysis.  For this purpose, we use the duality unified spectral and dispersion analyses of~\cite{ hughes2008duality}.  The dispersion error relations and estimates have been established for internal matrix rows in \cite[Theorem 1]{ deng2018ddm} for arbitrary order $p$ (for $p=2,3,4$, see also~\cite{ hughes2014finite}).  Moreover, the dispersion errors for multiple dimensions can be established by using the tensor-product structure; see, for example,~\cite{ ainsworth2010optimally} for finite elements and~\cite{ calo2019dispersion} for isogeometric elements.  The outlier-free IGA adjusts the boundary nodes to remove the outliers and does not affect the internal nodes.  Therefore, we have unified dispersion error relations and estimates for the internal and boundary matrix rows in OF-IGA.

\subsection{Matrix commuting: justification of Bloch wave assumption} \label{sec:zcom}

Before we perform the dispersion analysis for outlier-free IGA elements, we first justify the exactness of the Bloch wave assumption.  This exactness eliminates the outliers in the outlier-free IGA setting.  Moreover, it serves as a base for the derivation of dispersion errors.  Consequently, this contributes to the derivation of the exact spectral error.

In 1D, the unit interval $\Omega=[0,1]$ is uniformly partitioned into $N$ elements with nodes $x_j = jh, j=0,1,\cdots,N$ and $h = 1/N$.  For an odd order $p$, let $I_h^1 = \{ 1,2,\cdots, N-1\}$ denotes an index set associated with the nodes $x_j = jh, j\in I_h^1$ while for an even order $p$, let $I_h^2 = \{ 1,2,\cdots, N\}$ denotes an index set associated with the nodes $x_j = (j-1/2)h, j\in I_h^2$.  There are $N-1$ nodes for odd $p$ and $N$ nodes for even $p$.  We justify the Bloch wave assumption for IGA elements by introducing the transformation $T_p$ defined below:

\begin{itemize}
\item For OF-IGA or softIGA elements with an odd order $p$, we  define $T_p$ as a matrix of dimension $(N-1)\times (N-1)$ such that the $j$-th column has entries $T_{kj}, k\in I_h^1$ where  $T_{kj}$ is  the $j$-th (new) B-spline basis function evaluated at nodes $x_k = kh, k\in I_h^1$;

\item For OF-IGA or softIGA elements with an even order $p$, we  define $T_p$ as a matrix of dimension $N\times N$ such that the $j$-th column has entries $T_{kj}, k\in I_h^2$ where  $T_{kj}$ is  the $j$-th (new) B-spline basis function evaluated at nodes $x_k = (k-1/2)h, k\in I_h^2$.

\end{itemize}

The transformation $T_p$ is an invertible matrix.  For an arbitrary function $f \in C^0([0,1])$, for $p$ being odd, let $F$ be a vector with entries $F_j = f(x_j) = f(jh), j\in I_h^1$.  Let $f_h = \sum_{j\in I_h^1} \hat{F}_j \phi_p^j$ be the linear combination of the new B-spline basis functions such that $f_h (x_j) = f(x_j), j\in I_h^1.$ Let $\hat F$ denote the vector with entries $\hat{F}_j, j\in I_h^1.$ Then, there holds
\begin{equation} \label{eq:Tp}
  F = T_p \hat F.
\end{equation}
Similarly, this property holds true for $p$ being even with the nodal evaluations at $x_j = (j-1/2)h, j\in I_h^2$.  For linear elements ($p=1$), $T_p$ is an identity matrix.  For $T_p, p=2,3,4,5$, we refer to the matrices listed in~\ref{app:tp}.  In general, we observe that
\begin{itemize}
\item $T_p$ is symmetric and persymmetric;
\item $T_p$ has a bandwidth of $\lfloor \frac{p}{2} \rfloor$.
\end{itemize}

Now, let $A$ and $B$ be square matrices of the same dimension and we define the commutator~\cite{ horn2012matrix} as:
\begin{equation} \label{eq:com}
  [A,B] = AB - BA.
\end{equation}
Matrices $A$ and $B$ commute if $[A,B] = 0$.  With this in mind, we have the following zero commutator properties.
\begin{lemma} \label{lem:kmt}
  Let $\hat{K}_p$ and $\hat{M}_p$ be the stiffness and mass matrices in~\eqref{eq:mevppp} corresponding to the $p$-th order softIGA~\eqref{eq:vfhhh} with $N$ uniform elements on $\Omega=[0,1]$.  For $p=1,2,3,4,5,$ there holds
\begin{equation} \label{eq:kmt}
[\hat{K}_p, T_p] = 0, \qquad [\hat{M}_p, T_p] = 0.
\end{equation}
\end{lemma}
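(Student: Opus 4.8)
The plan is to verify the commutation relations~\eqref{eq:kmt} by exploiting the special structure of all the matrices involved: $\hat K_p = \tilde K_p - \eta S_p$, $\hat M_p = \tilde M_p$, and $T_p$ are each symmetric and persymmetric, band-limited, and (away from the boundary rows) Toeplitz. The key observation is that a commutator is linear in each argument, so $[\hat K_p, T_p] = [\tilde K_p, T_p] - \eta [S_p, T_p]$, and it therefore suffices to prove $[\tilde K_p, T_p] = 0$, $[S_p, T_p] = 0$, and $[\tilde M_p, T_p] = 0$ separately. Since $\tilde K_p$, $S_p$, $\tilde M_p$, and $T_p$ are all explicitly given for $p = 1, 2, 3, 4, 5$ (in the body of the paper and in the appendices), the verification for each fixed $p$ is a finite symbolic computation on banded, structured matrices.

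First I would reduce the problem to the bulk and the boundary layers. For two banded symmetric Toeplitz matrices that are both persymmetric, the product computed using the purely Toeplitz rule is again Toeplitz and symmetric, and two symmetric Toeplitz matrices always commute in the bi-infinite (or circulant) setting because they are simultaneously diagonalized by the Fourier/sine basis — indeed this is exactly the content underlying the Bloch-wave ansatz, and it is why the eigenvectors of $\hat K_p$, $\hat M_p$ are sampled sines $(\sin(k t_j))_k$ or $(\sin((k-1/2) t_j))_k$ as recorded in Lemmas~\ref{lem:epp2}--\ref{lem:epp4}. Concretely, the sine vectors $\hat U_j$ are common eigenvectors of $\tilde K_p$, $S_p$, $\tilde M_p$, and $T_p$: from~\eqref{eq:epp2}--\eqref{eq:epp4} they diagonalize $\hat K_p$ and $\tilde M_p$, and an analogous (simpler) computation using the explicit banded entries of $T_p$ in~\ref{app:tp} shows $T_p \hat U_j = \mu_j^{(p)} \hat U_j$ for scalars $\mu_j^{(p)}$ given by a trigonometric polynomial in $t_j$. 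Once all four matrices share the full orthonormal eigenbasis $\{\hat U_j\}$, every pair of them commutes, and~\eqref{eq:kmt} follows immediately. So the proof structure is: (i) exhibit the common sine eigenbasis; (ii) conclude commutativity from simultaneous diagonalizability.

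The step I expect to be the main obstacle is the boundary rows. The matrices $\tilde K_p$, $S_p$, $\tilde M_p$, $T_p$ are Toeplitz only in their interior; the first and last $\lfloor p/2 \rfloor$ or so rows are modified (this is precisely the outlier-removing reconstruction of the boundary B-splines), and it is not a priori obvious that the sampled-sine vectors remain exact eigenvectors once these boundary corrections are present. The decisive fact is that the OF-IGA boundary reconstruction was designed exactly so that the Bloch-wave vectors $(\sin(k t_j))_k$ (odd $p$) or $(\sin((k-1/2) t_j))_k$ (even $p$) — which vanish at the Dirichlet-type ghost nodes by construction — satisfy the boundary rows of the GMEVP identically; this is what makes the matrices "Toeplitz-plus-Hankel" with the Hankel part encoding the reflection symmetry, and it is why~\cite[Thm.~2.1 and Thm.~2.2]{deng2021analytical} apply to give the closed-form eigenpairs in the first place. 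I would therefore verify, for each $p \in \{1,2,3,4,5\}$, that the same sine vectors are eigenvectors of $T_p$ with the boundary rows included — a direct check using the appendix matrices and the identity $\sum_{m} T_{km}\sin(m t_j) = \mu_j \sin(k t_j)$ together with the sine addition formula and the boundary conditions $\sin(0) = 0$, $\sin((N+\text{const})t_j)$-type cancellations at the far end.

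Finally, with the common eigenbasis established, I would record the clean algebraic conclusion: if $P$ is the invertible matrix whose columns are $\hat U_j$ (equivalently, up to scaling, the sine/DST matrix appropriate to the parity of $p$), then $P^{-1}\tilde K_p P$, $P^{-1} S_p P$, $P^{-1}\tilde M_p P$, and $P^{-1} T_p P$ are all diagonal, hence $\hat K_p = \tilde K_p - \eta S_p$ is diagonalized by $P$ as well, and therefore $[\hat K_p, T_p] = P(\operatorname{diag}\cdot\operatorname{diag} - \operatorname{diag}\cdot\operatorname{diag})P^{-1} = 0$ and likewise $[\hat M_p, T_p] = [\tilde M_p, T_p] = 0$. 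This completes the proof for $p = 1, 2, 3, 4, 5$; the pattern makes clear that the same argument extends to general $p$ once the corresponding analytical eigenpairs and the explicit form of $T_p$ are written down.
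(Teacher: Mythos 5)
Your proposal is correct in substance but takes a genuinely different route from the paper. The paper's own proof is a brute-force verification: for $p=1$ the matrix $T_1$ is the identity, and for $p=2,3,4,5$ the matrices $\tilde K_p$, $S_p$, $\tilde M_p$, $T_p$ are written out explicitly (Section~\ref{sec:sr} and the appendices), and the commutators are checked by direct matrix multiplication. You instead argue via simultaneous diagonalizability: all four matrices share the Toeplitz-plus-Hankel pattern of \cite[Thms.~2.1--2.2]{deng2021analytical}, hence admit the common sampled-sine eigenbasis $\bigl(\sin(kt_j)\bigr)_k$ or $\bigl(\sin((k-1/2)t_j)\bigr)_k$, and matrices with a common complete eigenbasis commute; linearity of the commutator then handles $\hat K_p=\tilde K_p-\eta S_p$ for every $\eta$. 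Your route is more conceptual: it explains \emph{why} the commutation holds, it treats all $\eta$ at once, and it directly supports the paper's conjecture (Remark after the lemma) that \eqref{eq:kmt} extends to arbitrary $p$, since only the boundary-row pattern needs to be checked. The paper's computation is more elementary and self-contained, needing no appeal to the structure theorems, and it is the natural choice given that only five fixed values of $p$ are claimed.

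Two points require care in your write-up. First, the generalized eigenpairs of Lemmas~\ref{lem:epp2}--\ref{lem:epp4} by themselves do not imply that $\hat K_p$ and $\tilde M_p$ are \emph{individually} diagonalized by the sine vectors: $\hat K_p\hat U_j=\hat\lambda_j^h\tilde M_p\hat U_j$ for a complete set $\{\hat U_j\}$ does not force $\tilde M_p\hat U_j\parallel\hat U_j$. You must invoke the individual eigenvector statement of \cite[Thms.~2.1--2.2]{deng2021analytical} (or verify it entrywise, boundary rows included, exactly as you propose for $T_p$) for each of $\tilde K_p$, $S_p$, $\tilde M_p$ separately; note also that for $p=5$ the paper does not record the analytical eigenpairs, so there the pattern check must be done from the appendix matrices. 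Second, be aware of the expository (not logical) tension: in the paper this lemma is the ingredient that \emph{justifies} the Bloch-wave ansatz (Theorem~\ref{thm:u}), whereas your proof leans on the closed-form sine eigenvectors; this is not circular because those eigenvectors come from the independent reference \cite{deng2021analytical}, but it is worth stating explicitly that you are not using Theorem~\ref{thm:u} or the dispersion analysis anywhere in the argument.
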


\begin{proof}
  For $p=1$, these identities hold true as $T_1$ is an identity matrix.  For $p=2,3,4,5,$ the matrices $T_p, \hat{K}_p, \hat{M}_p$ are given in~\ref{app:tp}, Section~\ref{sec:sr}, and~\ref{app:p4}.  Simple matrix calculations lead to the desired results.
\end{proof}

\begin{remark}
  Lemma~\ref{lem:kmt} holds for OF-IGA matrices as they are special cases of softIGA when $\eta=0$.  Also, without proof, we conjecture that Lemma~\ref{lem:kmt} holds for arbitrary-order softIGA elements.  The proof relies on finding a recursive or generic formula for the matrices entries, which depends on a generic formula for the new B-spline basis function constructions near the boundary.
\end{remark}

We now justify the use of the Bloch wave (or sinusoidal in the particular setting of this paper) assumption for the eigenvector $U=(U_j)$ of the matrix eigenvalue problem~\eqref{eq:mevppp}.  First, in standard FEM with Lagrangian basis functions, each entry $U_j$ of an eigenvector $U$ represents the approximate nodal value of the exact solution since (1) the Lagrangian basis function is 1  at a node and is zero at all other nodes and (2) the entry of the eigenvector represents the approximate solution evaluated at a node.  Thus, for the dispersion analysis of a FEM, the Bloch wave assumption $U_j = e^{\iota \omega j h}$ is directly applied at the nodes $x_j = jh$; see, for example,~\cite{ ainsworth2004discrete, ainsworth2010optimally}.  In the IGA setting, the eigenvector $U$ represents the coefficients of the linear combination of B-splines.  The vector with nodal approximations are given as $\bar U = T_p U$.  We now establish the following result.
\begin{theorem} \label{thm:u}
  Let $\hat{K}_p$ and $\hat{M}_p$ be the stiffness and mass matrices in~\eqref{eq:mevppp} corresponding to the $p$-th order softIGA~\eqref{eq:vfhhh} with $N$ uniform elements on $\Omega=[0,1]$.  Let $(\lambda^h, U)$ be the solution to the eigenvalue problem~\eqref{eq:mevppp} and $\bar U = T_p U$.  Assuming~\eqref{eq:kmt} holds true for arbitrary order $p$, then $(\lambda^h, \bar U)$ is also a solution to the same eigenvalue problem~\eqref{eq:mevppp}.
\end{theorem}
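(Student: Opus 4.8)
The plan is to exploit the commuting relations in Lemma~\ref{lem:kmt} together with the eigenvalue equation. Since $(\lambda^h, U)$ solves $\hat K_p U = \lambda^h \hat M_p U$, I would left-multiply both sides by $T_p$ and then push $T_p$ through the matrices. First I would write $T_p \hat K_p U = T_p \lambda^h \hat M_p U = \lambda^h T_p \hat M_p U$. Using $[\hat K_p, T_p] = 0$, i.e. $T_p \hat K_p = \hat K_p T_p$, the left side becomes $\hat K_p (T_p U) = \hat K_p \bar U$. Using $[\hat M_p, T_p] = 0$, the right side becomes $\lambda^h \hat M_p (T_p U) = \lambda^h \hat M_p \bar U$. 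Hence $\hat K_p \bar U = \lambda^h \hat M_p \bar U$, which is precisely the statement that $(\lambda^h, \bar U)$ solves~\eqref{eq:mevppp}.

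The only subtlety worth flagging is that one should check $\bar U = T_p U$ is not the zero vector, so that it genuinely qualifies as an eigenvector rather than a trivial solution. This follows immediately because $T_p$ is invertible (stated in the text just below~\eqref{eq:Tp}) and $U \ne 0$ as an eigenvector, so $\bar U \ne 0$. I would include this as a one-line remark after the main computation.

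There is essentially no obstacle here beyond the algebra, since the heavy lifting was already done in Lemma~\ref{lem:kmt} (establishing the commutators) and in the hypothesis of the theorem (which assumes~\eqref{eq:kmt} for arbitrary $p$). The ``hard part,'' conceptually, is understanding \emph{why} this matters: $\bar U$ and $U$ being simultaneous eigenvectors of the pencil means the eigenvectors can be taken to be common eigenvectors of $T_p$ as well (when eigenvalues are simple), and the eigenvectors of $T_p$ are exactly the sinusoidal/Bloch modes by its Toeplitz-plus-Hankel structure — this is what legitimizes the Bloch wave ansatz $U_j = \sin(j t)$ or $\sin((j-1/2)t)$ used throughout Section~\ref{sec:sr} and the dispersion analysis. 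But that interpretive discussion belongs in the surrounding text; the proof of Theorem~\ref{thm:u} itself is the three-line push-through computation above.
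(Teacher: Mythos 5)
Your proposal is correct and follows essentially the same route as the paper: left-multiply the eigenvalue equation by $T_p$ and use the commutator identities~\eqref{eq:kmt} to move $T_p$ past $\hat K_p$ and $\hat M_p$, yielding $\hat K_p \bar U = \lambda^h \hat M_p \bar U$. Your added remark that $\bar U \ne 0$ because $T_p$ is invertible is a sensible small refinement the paper leaves implicit.
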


\begin{proof}
  Multiplying $T_p$ to both sides
  $$\hat{K}_p U = \lambda^h \hat{M}_pU$$
  from the left, we have 
  $$ T_p K_pU = \lambda^h T_p M_pU.$$
  Applying the zero commutator property~\eqref{eq:kmt} implies the desired result. 
\end{proof}

Theorem~\ref{thm:u} justifies the use of the Bloch wave assumption for the dispersion analysis of IGA-related methods.  For the standard IGA method with $p=1,2,$ the zero commutator property~\eqref{eq:kmt} holds, which justifies the Bloch wave assumption that consequently leads to a unified dispersion error for all matrix rows (internal and boundary rows).  For the standard IGA method with $p>2$, the zero commutator property~\eqref{eq:kmt} is no longer valid due to non-zero values appearing near the boundary elements.  The Bloch wave assumption is valid only for internal nodes in the sense of $N\to \infty$.  This non-exactness for the nodes associated with the boundary elements contributes to the outliers in the IGA spectra.  The exactness contributes to the elimination of the outliers.  In the softIGA/OF-IGA, the Bloch wave assumption is valid for all matrix rows.  Consequently, this unifies the dispersion errors and leads to outlier-free approximate spectra.

\subsection{Dispersion error analysis}

The dispersion analysis for $C^0$-linear and $C^1$-quadratic elements was established, and the dispersion errors for both internal and boundary rows are unified in~\cite{ hughes2008duality, hughes2014finite}. Nevertheless, the quadratic case is not explicitly unified in~\cite{ hughes2014finite}, but one can easily verify that the dispersion error equation (130) in~\cite{ hughes2014finite} holds for all the matrix rows. For the $C^2$-cubic case, the dispersion errors for the matrix rows of OF-IGA near the boundaries were established in~\cite{ deng2021boundary}. In this section, since OF-IGA is a special case of softIGA when $\eta=0$, we focus on the dispersion analysis for softIGA.

With the justification of Bloch wave assumption (see section~\ref{sec:zcom}) in mind, for an odd-order ($p=1,3,5,\cdots$) element, we assume that the component $U_{j,k}$ of the $j$-th eigenvector $U_j$ takes the Bloch waveform
\begin{equation} \label{eq:ujk}
  U_{j,k} = \sin(\omega_j k h), \quad k = 1, 2, \cdots, N-1,
\end{equation}
where $\omega_j = j\pi$ is the eigenfrequency associated with the $j$-th mode.  Similarly, for an even-order ($p=2,4,6,\cdots$) element, we assume that the component $U_{j,k}$ of the $j$-th eigenvector $U_j$ takes the Bloch waveform
\begin{equation} \label{eq:ujk2}
  U_{j,k} = \sin\big(\omega_j (k-\frac12) h \big), \quad k = 1, 2, \cdots, N.
\end{equation}

For $p$-th order element, the matrix rows start repeating from $l=p+1$ to $j=N-p-2$ for odd-valued $p$ while to $l=N-p-1$ for even-valued $p$. Those internal nodes are the same as in the standard IGA.  They have the dispersion relation~\cite[Proof of Lemma 1]{ deng2018ddm}
\begin{equation} \label{eq:dr}
  \hat{\lambda}_j^h h^2 = \frac{ \hat{K}_{l,l} + 2 \sum_{k=1}^p \hat{K}_{l,l+k} \cos(k \omega_j h)  }{\hat{M}_{l,l} + 2 \sum_{k=1}^p \hat{M}_{l,l+k} \cos(k \omega_j h) },
\end{equation}
which can be derived from the matrix problem~\eqref{eq:mevppp} using~\eqref{eq:ujk} for odd-valued $p$ or~\eqref{eq:ujk2} for even-valued $p$ with the trigonometric identity $\sin(\alpha \pm \beta) = \sin(\alpha) \cos(\beta) \pm \cos(\alpha) \sin(\beta)$.  Conventionally, the dispersion relation is written in terms of $\omega_j$ and $\omega_j^h := \sqrt{\hat{\lambda}_j^h}$ and the dispersion error is defined as $| \omega_j - \omega_j^h |$.  Herein, we equivalently represent the dispersion error in terms of the eigenvalues.  In softIGA, the matrices entries near the boundaries are regularized such that the dispersion relation~\eqref{eq:dr} is unified for all the rows of the matrix eigenvalue problem~\eqref{eq:mevppp}.  This unification is guaranteed by the resulting matrices patterns~\cite[Theorems 2.1 \& 2.2]{ deng2021analytical}.  In particular, with eigenfrequencies $\omega_j = j\pi$,~\eqref{eq:dr} simplifies to the analytical eigenvalues~\eqref{eq:epp2},~\eqref{eq:epp3}, and~\eqref{eq:epp4} for quadratic, cubic, and quartic softIGA elements, respectively.  With the matrices provided in Section~\ref{sec:sr} and appendices, one can easily derive the dispersion (squared relative) errors
\begin{equation} \label{eq:des}
\frac{(\omega^h_j)^2 - \omega_j^2}{\omega_j^2} = 
  \begin{cases}
  \Big( \frac{1}{720} - \eta \Big) (\omega_j  h)^4 + \frac{(\omega_j  h)^6}{3,360} + \mathcal{O}((\omega_j h)^8 ), & p=2, \\
  \Big( \frac{1}{30,240} - \eta \Big) (\omega_j  h)^6 + \frac{(\omega_j  h)^8}{60,480} + \mathcal{O}((\omega_j h)^{10} ), & p=3, \\
  \Big( \frac{1}{1,209,600} - \eta \Big) (\omega_j  h)^8 + \frac{(\omega_j  h)^{10}}{1,368,576} + \mathcal{O}((\omega_j h)^{12} ), & p=4, \\
  \Big( \frac{1}{47,900,160} - \eta \Big) (\omega_j  h)^{10} + \frac{691(\omega_j  h)^{12}}{24,216,192,000} + \mathcal{O}((\omega_j h)^{14} ), & p=5.
  \end{cases}
\end{equation}    
These dispersion errors lead to the optimal eigenvalue errors
\begin{equation}
\frac{ |\hat \lambda_j^h - \lambda_j|}{\lambda_j}  \le Ch^{2p}, 
\end{equation} 
where $C>$ is a constant independent of $h$. 
With the choices of the softness parameter
\begin{equation} 
\eta = 
  \begin{cases}
  \frac{1}{720}, & p=2, \\
  \frac{1}{30,240}, & p=3, \\
  \frac{1}{1,209,600}, & p=4, \\
  \frac{1}{47,900,160}, & p=5,
  \end{cases}
\end{equation}   
we have a superconvergent eigenvalue error
\begin{equation}
\frac{ |\hat \lambda_j^h - \lambda_j|}{\lambda_j}  \le Ch^{2p+2},
\end{equation} 
for $p=2,3,4,5$. 
We expect this to hold for higher-order elements.
Lastly, we note that with $\eta=0$, ~\eqref{eq:des} reduces to the dispersion errors of OF-IGA for $p=2,3,4,5.$

\section{Numerical experiments} \label{sec:num}

This section presents numerical simulations of problem~\eqref{eq:pde} in 1D, 2D, and 3D using isogeometric and tensor-product elements.  Once the eigenvalue problem is solved, we sort the discrete eigenpairs and pair them with the exact eigenpairs. We focus on the numerical approximation properties of the eigenvalues. However, in 1D, we also report the eigenfunction (EF) errors in $H^1$ semi-norm (energy norm). To have comparable scales, we collect the relative eigenvalue errors and scale the energy norm by the corresponding eigenvalues~\cite{ puzyrev2017dispersion, calo2019dispersion}. For softIGA approximations.
 we define the relative eigenvalue error as
\begin{equation}
\hat e_j = \frac{| \hat \lambda^h_j - \lambda_j | }{\lambda_j}
\end{equation}
Similarly, we define the eigenfunction errors in the $H^1$-seminorm (or energy) norm as $\hat e_{u_j} =  |u_j - \hat u_j^h|_{1,\Omega} $.

\begin{figure}[h!]
\includegraphics[height=5.8cm]{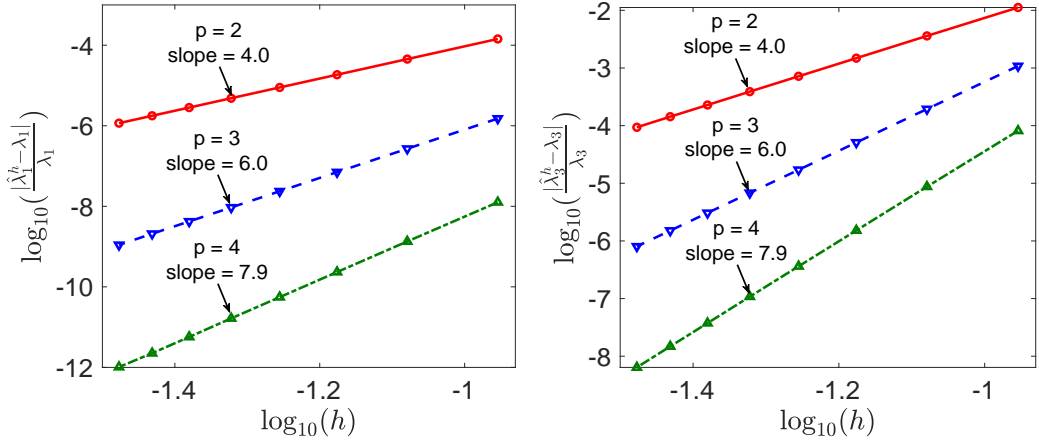} 
\vspace{-0.6cm}
\caption{Relative eigenvalue errors of softIGA with respect to mesh size $h$ in 1D for $p=\{2,3,4\}$.}
\label{fig:l1l3err1d}
\end{figure}

\subsection{Error estimates}

We first demonstrate the optimal eigenvalue and eigenfunction errors of softIGA.  With a cube domain $\Omega=[0, 1]^d, d=1,2,3$, the differential eigenvalue problem~\eqref{eq:pde} has true eigenvalues and eigenfunctions
\begin{equation*}
\begin{cases}
\lambda_j  = j^2 \pi^2, \quad \text{and} \quad u_j = \sqrt{2} \sin( j\pi x), \quad j = 1, 2, \cdots, & d=1, \\
 \lambda_{jk}  = ( j^2 + k^2 ) \pi^2, u_{jk} = \sin( j\pi x)\sin( k\pi y) \big),  j,k = 1, 2, \cdots, & d=2, \\
 \lambda_{jkl}  = ( j^2 + k^2 + l^2) \pi^2, u_{jkl} = \sin( j\pi x)\sin( k\pi y) \sin( l\pi z) \big),  j,k,l = 1, 2, \cdots, & d=3,
\end{cases}
\end{equation*}
respectively. 

\begin{figure}[h!]
\includegraphics[height=5.8cm]{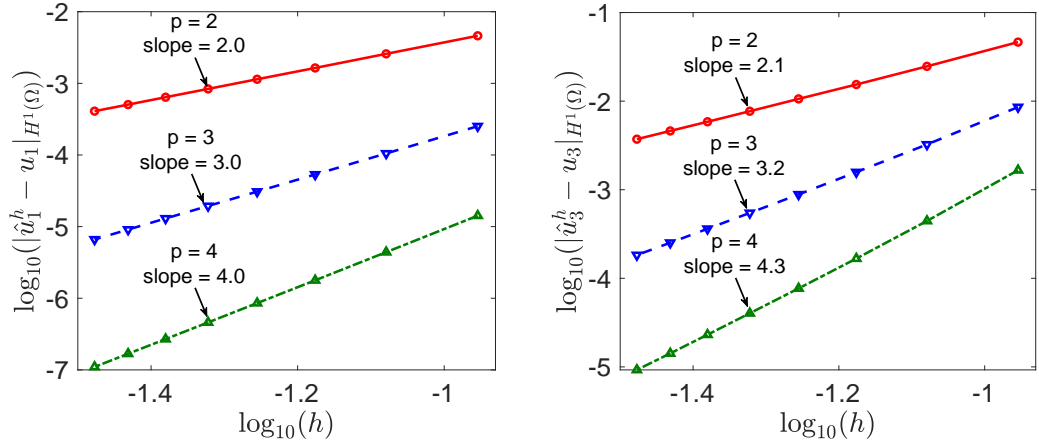} 
\vspace{-0.6cm}
\caption{Eigenfunction $H^1$-seminorm errors of softIGA with respect to mesh size $h$ in 1D for $p=\{2,3,4\}$.}
\label{fig:u1u3err1d}
\end{figure}

Figure~\ref{fig:l1l3err1d} shows the relative eigenvalue errors of softIGA for problem~\ref{eq:pde} in 1D with respect to the mesh size $h$.  There are $N = 3j, j =3,4, \cdots, 10,$ uniform elements.  We focus on the first and third eigenvalues and consider $p=2,3,4$.  By default, we set the softness parameter $\eta = \frac{3}{272}, \frac{69}{79,360}, \frac{451}{6,191,360}$ in~\eqref{eq:vfhhh} for $p=2,3,4,$ respectively.  The eigenvalue errors converge with rates $h^{2p}$, which confirms the optimal convergence~\eqref{eq:ee} for the eigenvalues.  The errors for other eigenvalues behave similarly.

Figure~\ref{fig:u1u3err1d} shows the $H^1$-seminorm errors of the first and third eigenfunctions approximated by softIGA elements.  The setting is the same as in Figure~\ref{fig:l1l3err1d}.  The errors are convergent of rates $h^{p}$, which confirms the optimal convergence~\eqref{eq:ee} for the eigenfunctions.  As  inequalities~\eqref{eq:errp2}, ~\eqref{eq:errp3}, and~\eqref{eq:errp4} show, the optimal convergence orders also hold true for the eigenvalue indices.  Figure~\ref{fig:eveferrn321d} shows the relative eigenvalue and eigenfunction $H^1$-seminorm errors with respect to eigenvalue index $j$.  We fix the mesh with $N=32$ uniform elements, and the errors are on a logarithmic scale.

\begin{figure}[h!]
\includegraphics[height=5.8cm]{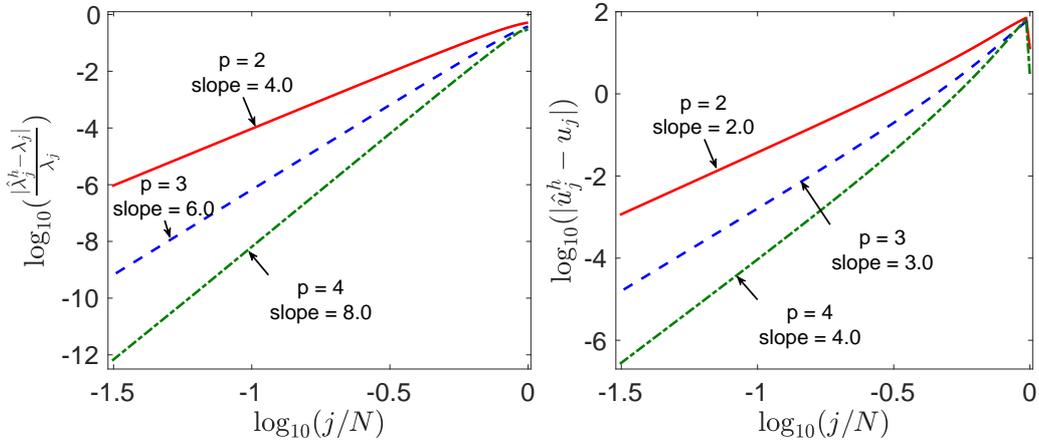} 
\vspace{-0.6cm}
\caption{Relative eigenvalue and eigenfunction $H^1$-seminorm errors in logarithmic scale with respect to eigenvalue index $j$ for $p=\{2,3,4\}$ with $N=32$ uniform softIGA elements in 1D.}
\label{fig:eveferrn321d}
\end{figure}

\begin{figure}[h!]
\includegraphics[height=5.8cm]{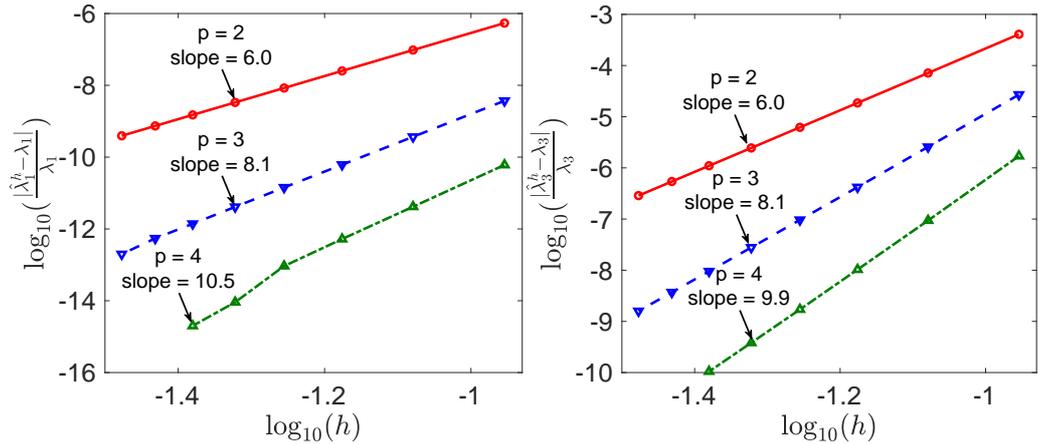} 
\vspace{-0.6cm}
\caption{Superconvergent relative eigenvalue errors of softIGA with respect to mesh size $h$ in 1D for $p=\{2,3,4\}$.}
\label{fig:l1l3superr1d}
\end{figure}

Figure~\ref{fig:l1l3superr1d} shows the superconvergence of order $h^{2p+2}$ for the relative eigenvalue errors of softIGA elements when choosing softness parameter $\eta = \frac{1}{720}, \frac{1}{30,240}, \frac{1}{1,209,600}$ in~\eqref{eq:vfhhh} for $p=2,3,4,$ respectively.  For $p=4$, the errors reach machine precision $10^{-15}$ when $N\approx 24.$ Thus, we omit the errors when $N=27,30$.  This superconvergence confirms the theoretical results established in Section~\ref{sec:sr}.
Lastly, Figures~\ref{fig:l1l3err2d} and~\ref{fig:l1l3err3d} show the relative eigenvalue errors in 2D and 3D, respectively.  Again, the errors have optimal convergence rates and confirm the theoretical expectations.

\begin{figure}[h!]
\includegraphics[height=5.8cm]{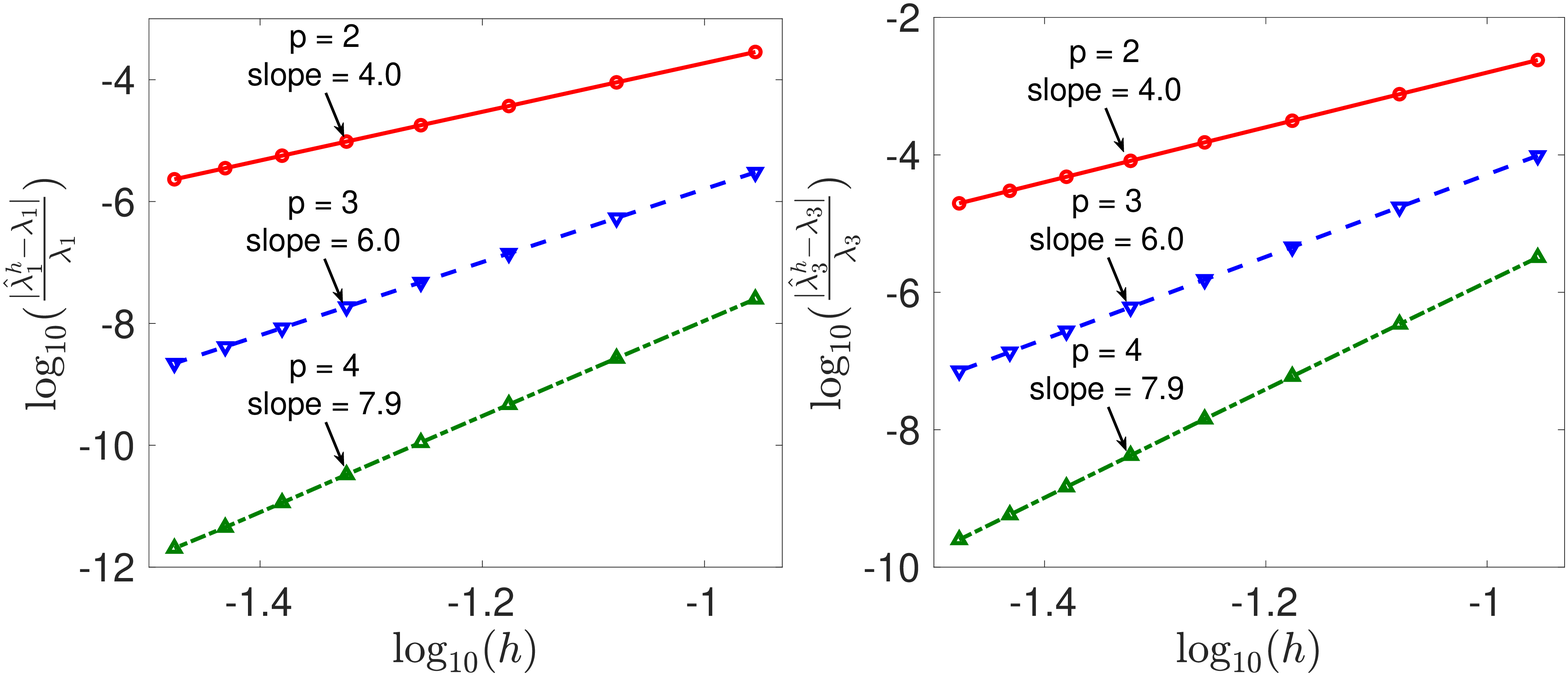} 
\vspace{-0.6cm}
\caption{Relative eigenvalue errors of softIGA with respect to mesh size $h$ in 2D for $p=\{2,3,4\}$.}
\label{fig:l1l3err2d}
\end{figure}

\begin{figure}[h!]
\includegraphics[height=5.8cm]{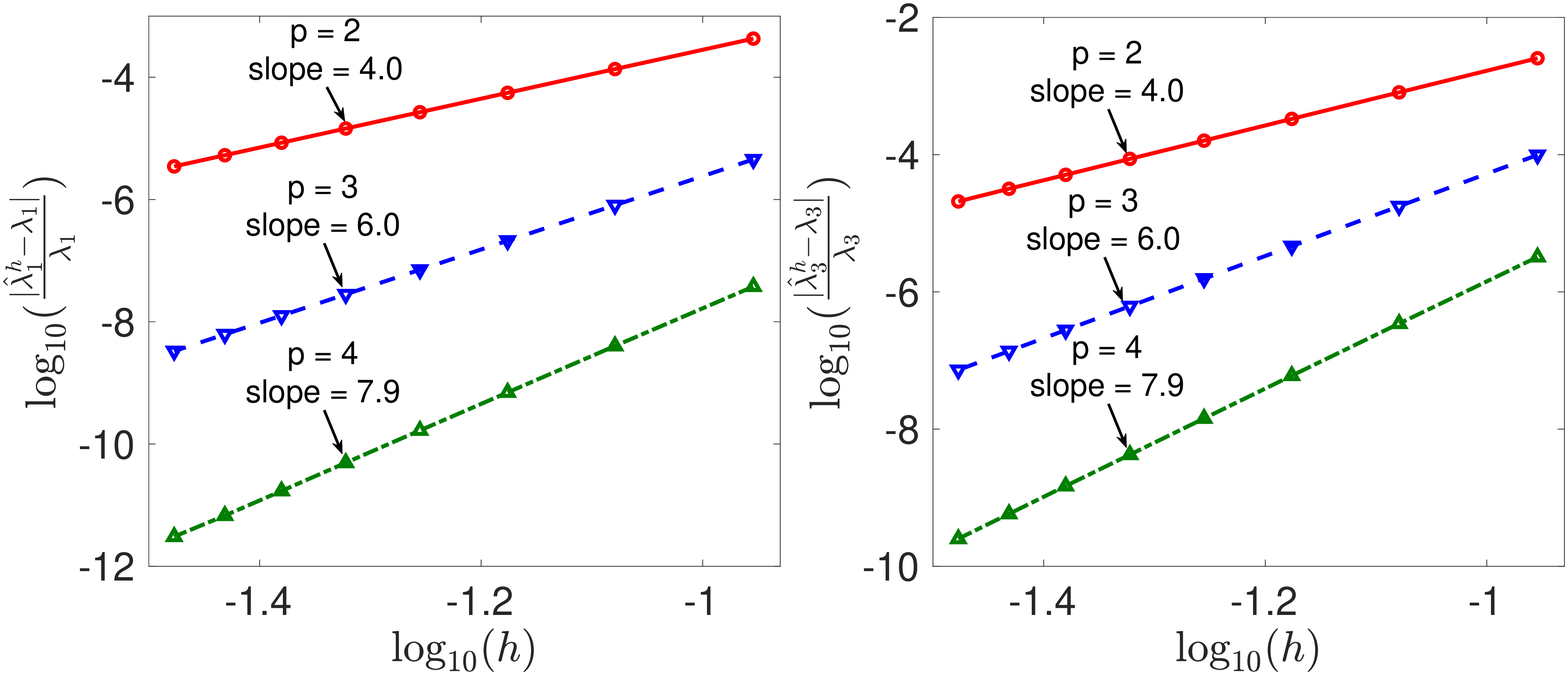} 
\vspace{-0.6cm}
\caption{Relative eigenvalue errors of softIGA with respect to mesh size $h$ in 3D for $p=\{2,3,4\}$.}
\label{fig:l1l3err3d}
\end{figure}

\subsection{Stiffness reduction}

\begin{table}[ht]
\centering 
\begin{tabular}{|c | c | ccc | ccc c| cc |}
\hline
$d$ & $p$ & $\lambda_{\min}^h$ &  $\lambda_{\max}^h$ & $ \hat \lambda_{\max}^h $ &  $\gamma$ &  $\hat \gamma$ & $\rho^h$ & $\varrho^h$ \\[0.1cm] \hline
 &	2 &	9.8696 &	1.0000e5 &	4.7059e4 &	1.0132e4 &	4.7681e3 &	2.1250 &	52.94\% \\[0.1cm]
1 &	3 &	9.8696 &	1.4556e5 &	5.7581e4 &	1.4748e4 &	5.8341e3 &	2.5279 &	60.44\% \\[0.1cm]
 &	4 &	9.8696 &	2.4490e5 &	6.8279e4 &	2.4814e4 &	6.9181e3 &	3.5868 &	72.12\% \\[0.1cm] \hline
 &	2 &	1.9739e1 &	3.2000e4 &	1.5059e4 &	1.6211e3 &	7.6289e2 &	2.1250 &	52.94\% \\[0.1cm]
2 &	3 &	1.9739e1 &	4.6579e4 &	1.8425e4 &	2.3597e3 &	9.3342e2 &	2.5280 &	60.44\% \\[0.1cm]
 &	4 &	1.9739e1 &	7.8369e4 &	2.1849e4 &	3.9702e3 &	1.1069e3 &	3.5868 &	72.12\% \\[0.1cm] \hline
 &	2 &	2.9609e1 &	1.2000e4 &	5.6471e3 &	4.0528e2 &	1.9072e2 &	2.1250 &	52.94\% \\[0.1cm]
3 &	3 &	2.9609e1 &	1.7470e4 &	6.9051e3 &	5.9004e2 &	2.3321e2 &	2.5301 &	60.48\% \\[0.1cm]
 &	4 &	2.9609e1 &	2.9392e4 &	8.1964e3 &	9.9268e2 &	2.7682e2 &	3.5860 &	72.11\% \\[0.1cm] \hline
 \end{tabular}
\caption{Minimal and maximal eigenvalues, condition numbers, stiffness reduction ratios, and reduction percentages when using IGA and softIGA with $p\in\{2,3,4\}$ and uniform meshes of $N=100, 40\times40, 20\times20\times20$ elements in 1D, 2D, and 3D, respectively.}
\label{tab:cond} 
\end{table}

\begin{table}[ht]
\centering 
\begin{tabular}{|c | c | ccc | ccc c| cc |}
\hline
$d$ & $p$ & $\tilde{\lambda}_{\min}^h$ &  $\tilde{\lambda}_{\max}^h$ & $ \hat \lambda_{\max}^h $ &  $\frac{\tilde{\lambda}_{\max}^h}{\tilde{\lambda}_{\min}^h}$ &  $\frac{\hat{\lambda}_{\max}^h}{\hat{\lambda}_{\max}^h}$ & $\tilde{\rho}^h$ & $\tilde{\varrho}^h$ \\[0.1cm] \hline
1 &3 & 9.8696 &   9.8675e4 &   5.7581e4 &   9.9979e3 &   5.8341e3 &   1.7137 &   41.65\% \\[0.1cm]
 &4 & 9.8696 &   9.8710e4 &   6.8279e4 &   1.0001e4 &   6.9181e3 &   1.4457 &   30.83\% \\[0.1cm] \hline
2 &3 & 1.9739e1 &   3.1331e4 &   1.8425e4 &   1.5872e3 &   9.3342e2 &   1.7004 &   41.19\% \\[0.1cm]
 &4 & 1.9739e1 &   3.1587e4 &   2.1849e4 &   1.6002e3  &  1.1069e3 &   1.4457 &   30.83\% \\[0.1cm] \hline
3 &3 & 2.9609e1 &   1.1437e4 &   6.9051e3 &   3.8627e2 &   2.3321e2 &   1.6563 &   39.63\% \\[0.1cm]
 &4 & 2.9609e1 &   1.1845e4 &   8.1964e3 &   4.0006e2 &   2.7682e2 &   1.4452 &   30.80\% \\[0.1cm] \hline
 \end{tabular}
\caption{Minimal and maximal eigenvalues, condition numbers, stiffness reduction ratios, and reduction percentages when using OF-IGA and softIGA with $p\in\{3,4\}$ and uniform meshes of $N=100, 40\times40, 20\times20\times20$ elements in 1D, 2D, and 3D, respectively.}
\label{tab:cond2} 
\end{table}

\begin{figure}[h!]
\includegraphics[height=6cm]{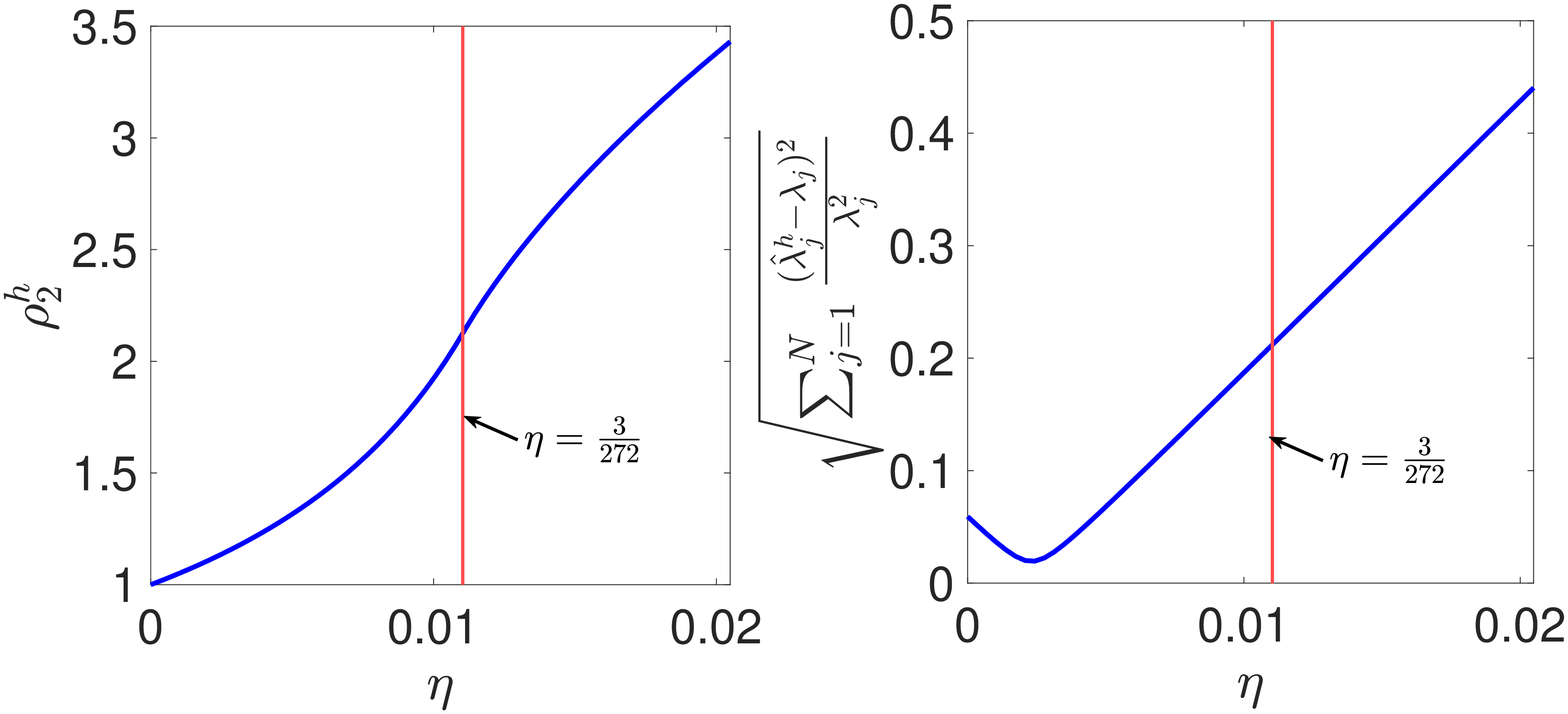} 
\vspace{-0.3cm}
\caption{Stiffness reduction ratios (left plot) and root mean square relative eigenvalue errors (right plot) with respect to softness parameter $\eta \in [0, \frac{1}{48}]$ for $C^1$-quadratic softIGA with $N=100$ uniform elements in 1D. }
\label{fig:etap2}
\end{figure}

\begin{figure}[h!]
\includegraphics[height=6cm]{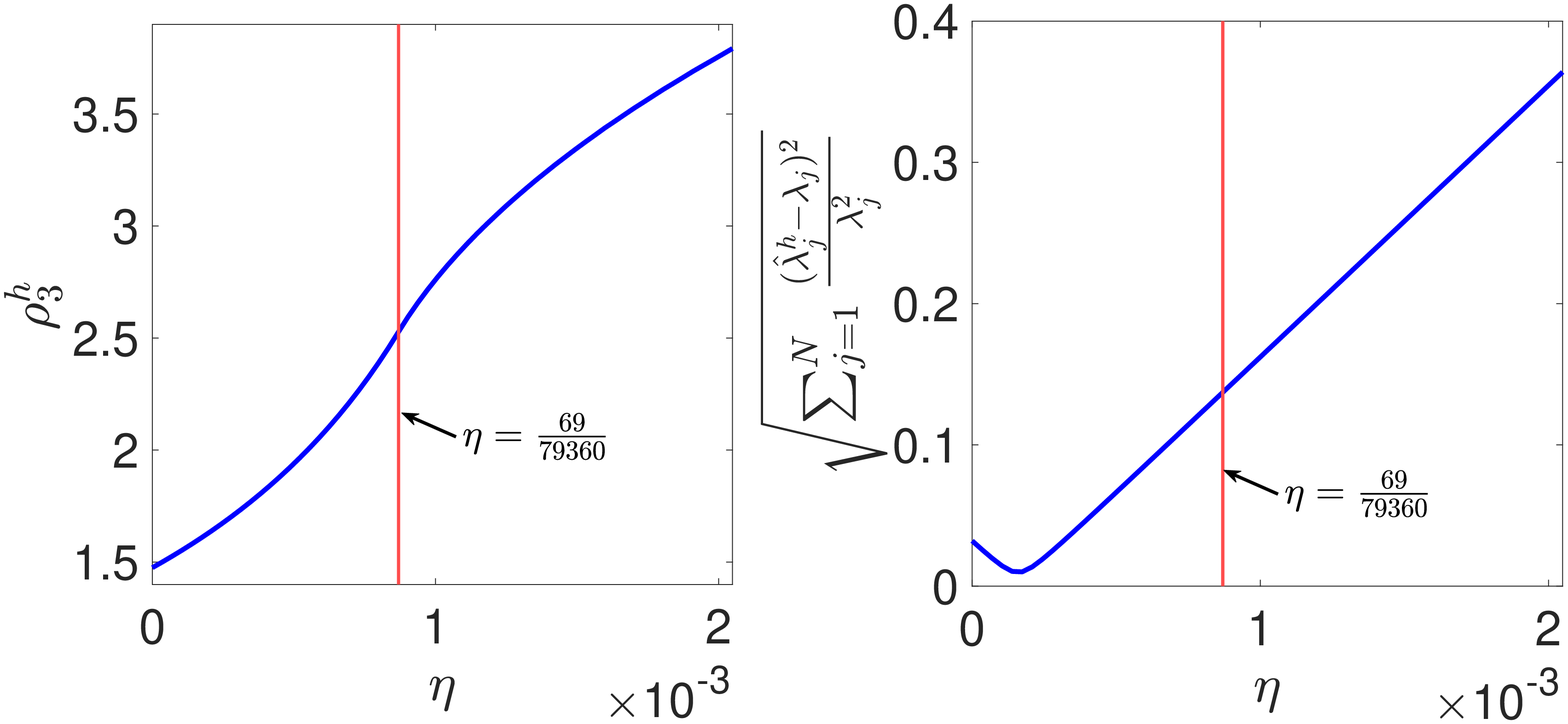} 
\vspace{-0.3cm}
\caption{Stiffness reduction ratios (left plot) and root mean square relative eigenvalue errors (right plot) with respect to softness parameter $\eta \in [0, \frac{1}{480}]$ for $C^2$-cubic softIGA with $N=100$ uniform elements in 1D. }
\label{fig:etap3}
\end{figure}

\begin{figure}[h!]
\includegraphics[height=6cm]{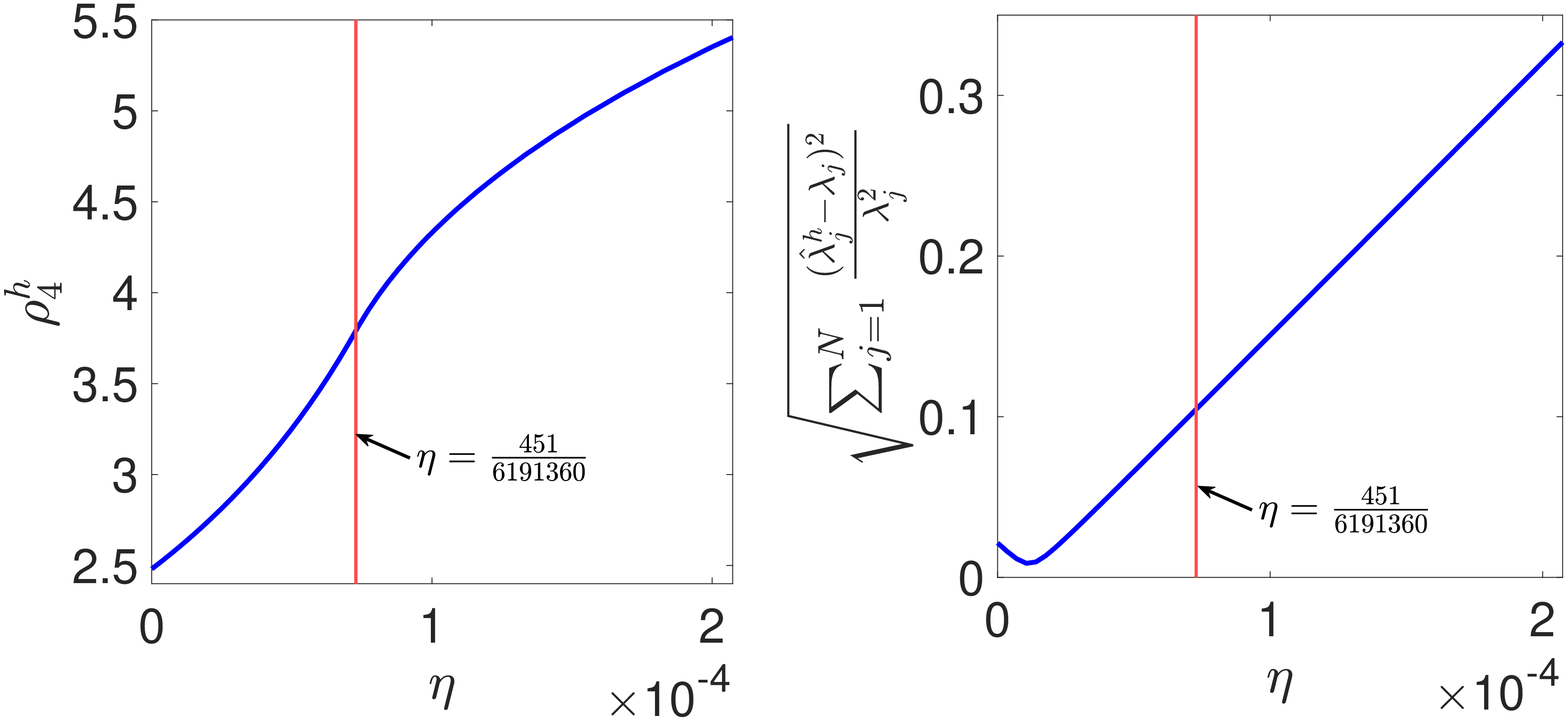} 
\vspace{-0.3cm}
\caption{Stiffness reduction ratios (left plot) and root mean square relative eigenvalue errors (right plot) with respect to softness parameter $\eta \in [0, \frac{17}{80,640}]$ for $C^3$-quartic softIGA with $N=100$ uniform elements in 1D. }
\label{fig:etap4}
\end{figure}

In this section, we study the stiffness of the softIGA systems and present the stiffness reduction ratios and percentages with respect to the standard IGA.  We define the stiffness (or condition number), reduction ratio, and reduction percentage in Section~\ref{sec:s}.  Table~\ref{tab:cond} shows the minimal and maximal eigenvalues, condition numbers, stiffness reduction ratios, and reduction percentages of softIGA with respect to IGA in 1D, 2D, and 3D.  
Table \ref{tab:cond2} shows these softIGA reductions with respect to OF-IGA. The condition number reduction ratio $\tilde{\rho}^h$ and percentage $\tilde{\varrho}^h$ of softIGA over OF-IGA are defined similarly as in \eqref{eq:srr} and \eqref{eq:srp}, respectively.
Again, as for Figure~\ref{fig:l1l3err1d}, we use the default values for the softness parameter $\eta$.  In 1D, we use $N=100$ uniform elements.  In 2D, we use a tensor-product mesh with $N=40\times 40$ uniform elements.  In 3D, we use a tensor-product mesh with $N=20\times 20\times 20$ uniform elements.  The first minimal softIGA eigenvalue for these meshes is the same (for the four digits shown in the table) as the ones approximated by IGA or OF-IGA.  For $p=2,3,4$, the stiffness reduction ratios of softIGA with respect to IGA are about 2.1, 2.5, and 3.6, respectively. 
The reduction percentages are about $53\%, 60\%,$ and $72\%$, respectively.  
Table \ref{tab:cond2} shows that the ratios and percentages of softIGA with respect to OF-IGA are smaller. 
Since there are no optical branches (no outliers) in OF-IGA spectra, 
the stiffness and error reduction are not as much as the case in the softFEM setting \cite{deng2021softfem}. 
For a general $C^k, k<p-1,$ $p$-th order IGA element, there are optical branches and one expects that softIGA has larger stiffness and error reductions. 
Due to the tensor-product structure, the ratios and percentages in both Tables \ref{tab:cond} and \ref{tab:cond2} hold in 2D and 3D.
Also, we observe that in 1D, 2D, and 3D, the condition numbers of $p$-th order softIGA are smaller than those of $(p+1)$-th order IGA and the stiffness matrix structures (sparsity) of both scenarios are similar.

Figures~\ref{fig:etap2},~\ref{fig:etap3}, and~\ref{fig:etap4} show the stiffness reduction ratios and the root mean square relative eigenvalue errors with respect to the softness parameter $\eta$, for $p=2,3,4$, respectively.  In all cases, the stiffness reduction ratio increases when $\eta$ increases, which we expect as one removes energy from the bilinear form $\eta s(\cdot, \cdot)$.  However, at some stage, characterized by the root mean square relative eigenvalue error, which is defined as
$$
e_\lambda = \sqrt{ \sum_j \hat e_j^2} = \sqrt{ \sum_j \Big( \frac{\lambda_j^h - \lambda_j }{\lambda_j} \Big)^2},
$$
the overall eigenvalue error increases when $\eta$ increases.  Even though the stiffness of softIGA with the default values of $\eta$ can be further reduced by increasing $\eta$, the eigenvalue errors increase.  Moreover, for large values of $\eta$ (greater than the ones denoted by the vertical lines), the approximated eigenvalues are not well-paired with the physical ones when sorted in ascending order.


\section{Concluding remarks} \label{sec:conclusion}

Our main contributions are as follows. First, we propose softIGA to solve the elliptic differential eigenvalue problem. Second, we derive analytical eigenpairs for the resulting matrix eigenvalue problems, followed by the dispersion error analysis for softIGA. We also establish the coercivity of the softIGA stiffness bilinear form and derive analytical eigenvalue errors. The main advantage of softIGA over the standard IGA is that softIGA leads to stiffness matrices with significantly smaller condition numbers. Hence, the stiffness of the discretized system is significantly reduced. Consequently, this leads to larger stability regions for explicit time-marching schemes. Moreover, on the implementation side, softIGA can take any existing IGA code and only need to add a simple feature to calculate the $p$-th order-derivative jumps for $p$-th order elements of $C^{p-1}$.

A future work direction is applying softIGA to solve explicit dynamics problems. We expect softIGA would improve the simulations in the sense of less computational time by increasing time-step sizes. We observe from Section~\ref{sec:sr} that the stiffness is reduced but the eigenfunction error remains the same. Thus, another exciting and challenging future work is developing softIGA such that stiffness is reduced while increasing the simulation accuracy.
Lastly, the idea of reducing the stiffness of the discretized systems can be applied to other discretization methods and differential operators such as the biharmonic operator. 
For example, softFEM for the second-order operator is extended to the biharmonic operator using both mixed and primal formulation in a straightforward fashion in \cite{deng2021softfem}.
For an operator of the form $\Delta^n, n = 2,3,\cdots$ where the problem is cast into a mixed form of equations where each is of second-order, 
one would expect a direct application of softIGA as the IGA discretization adopted in \cite{deng2019optimal}.
In a primal formulation, the major challenge is to optimize the softness parameter such that the condition numbers are reduced
 while maintaining the accuracy and coercivity of the discrete system.

\section*{Acknowledgments} 

This publication was made possible in part by the Professorial Chair in Computational Geoscience and the Curtin Corrosion Centre at Curtin University. This project has received funding from the European Union's Horizon 2020 research and innovation programme under the Marie Sklodowska-Curie grant agreement No 777778 (MATHROCKS). 



\appendix{}

\section{Matrices for $C^3$-quartic elements} \label{app:p4}

For $C^3$-quartic and $C^4$-quintic OF-IGA $N$ uniform elements in 1D, the mass and stiffness matrices are as follows.
\begin{equation} \label{eq:mk4}
\begin{aligned}
\tilde{K}_4 & = \frac{1}{h}
\begin{bmatrix}
\frac{31}{60} & \frac{19}{120} & -\frac{139}{840} & -\frac{13}{560} & -\frac{1}{5,040}\\[0.2cm]
\frac{19}{120} & \frac{107}{210} & -\frac{17}{560} & -\frac{17}{90} & -\frac{59}{2,520} & -\frac{1}{5,040}  \\[0.2cm]
-\frac{139}{840} &-\frac{17}{560} & \frac{35}{72} & -\frac{11}{360} & -\frac{17}{90} & -\frac{59}{2,520} & -\frac{1}{5,040} \\[0.2cm]
-\frac{13}{560}  & -\frac{17}{90} & -\frac{11}{360}  & \frac{35}{72} & -\frac{11}{360} & -\frac{17}{90} & -\frac{59}{2,520} & -\frac{1}{5,040}  \\[0.2cm]
\ddots & \ddots & \ddots & \ddots & \ddots & \ddots & \ddots & \ddots \\[0.2cm]
\end{bmatrix}_{N\times N}, \\
\tilde{M}_4 &= h
\begin{bmatrix}
 \frac{809}{4,320} & \frac{1,753}{8,640} & \frac{2,351}{60,480} & \frac{167}{120,960} & \frac{1}{362,880} \\[0.2cm]
\frac{1,753}{8,640} & \frac{6,487}{15,120} & \frac{29,411}{120,960} & \frac{913}{22,680} & \frac{251}{181,440} & \frac{1}{362,880} \\[0.2cm]
\frac{2,351}{60,480}  & \frac{29,411}{120,960} & \frac{15,619}{36,288} & \frac{44,117}{181,440} & \frac{913}{22,680} & \frac{251}{181,440} & \frac{1}{362,880} \\[0.2cm]
\frac{167}{120,960}   & \frac{913}{22,680}  & \frac{44,117}{181,440}  & \frac{15,619}{36,288} & \frac{44,117}{181,440} & \frac{913}{22,680} & \frac{251}{181,440} & \frac{1}{362,880} \\[0.2cm]
\ddots  & \ddots & \ddots & \ddots & \ddots & \ddots & \ddots & \ddots  \\[0.2cm]
\end{bmatrix}_{N\times N}, 
\end{aligned}
\end{equation}
%
and in $\mathbb{R}^{{(N-1)\times (N-1)}}$
\begin{equation} \label{eq:mk5}
\begin{aligned}
\tilde{M}_5 &= h
\begin{bmatrix}
 \frac{10,243}{30,240} & \frac{96,823}{403,200} & \frac{50,033}{907,200} & \frac{3,469}{907,200} & \frac{509}{9,979,200} & \frac{1}{39,916,800} \\[0.2cm]
\frac{96,823}{403,200}  & \frac{357,323}{907,200} & \frac{126,469}{518,400} & \frac{1,093}{19,800} & \frac{50,879}{13,305,600} & \frac{509}{9,979,200} & \frac{1}{39,916,800} \\[0.2cm]
\frac{50,033}{907,200}   & \frac{126,469}{518,400}  & \frac{655,177}{1,663,200}  & \frac{1,623,019}{6,652,800} & \frac{1,093}{19,800} & \frac{50,879}{13,305,600} & \frac{509}{9,979,200} & \frac{1}{39,916,800} \\[0.2cm]
\frac{3,469}{907,200} & \frac{1,093}{19,800}  & \frac{1,623,019}{6,652,800}  & \frac{655,177}{1,663,200}  & \frac{1,623,019}{6,652,800} & \frac{1,093}{19,800} & \frac{50,879}{13,305,600} & \frac{509}{9,979,200} & \frac{1}{39,916,800} \\[0.2cm]
\frac{509}{9,979,200} & \frac{50,879}{13,305,600} & \frac{1,093}{19,800}  & \frac{1,623,019}{6,652,800}  & \frac{655,177}{1,663,200}  & \frac{1,623,019}{6,652,800} & \frac{1,093}{19,800} & \frac{50,879}{13,305,600} & \cdots \\[0.2cm]
\ddots  & \ddots & \ddots & \ddots & \ddots & \ddots & \ddots & \ddots  \\[0.2cm]
\end{bmatrix}, \\
%
\tilde{K}_5 & = \frac{1}{h}
\begin{bmatrix}
\frac{8,143}{15,120} & \frac{1,285}{24,192} & -\frac{2,951}{18,144} & -\frac{3,401}{90,720} &  -\frac{25}{18,144} & -\frac{1}{362,880}\\[0.2cm]
\frac{1,285}{24,192} & \frac{34,103}{90,720} & \frac{5,671}{362,880} & -\frac{31}{189} & -\frac{907}{24,192} & -\frac{25}{18,144} & -\frac{1}{362,880}  \\[0.2cm]
-\frac{2,951}{18,144} &\frac{5,671}{362,880} & \frac{809}{2,160} & \frac{1}{64} & -\frac{31}{189} & -\frac{907}{24,192} & -\frac{25}{18,144} & -\frac{1}{362,880}  \\[0.2cm]
-\frac{3,401}{90,720}  & -\frac{31}{189} & \frac{1}{64} & \frac{809}{2,160} & \frac{1}{64} & -\frac{31}{189} & -\frac{907}{24,192} & -\frac{25}{18,144} & -\frac{1}{362,880}  \\[0.2cm]
 -\frac{25}{18,144} & -\frac{907}{24,192}  & -\frac{31}{189} & \frac{1}{64} & \frac{809}{2,160} & \frac{1}{64} & -\frac{31}{189} & -\frac{907}{24,192} &  \cdots  \\[0.2cm]
\ddots & \ddots & \ddots & \ddots & \ddots & \ddots & \ddots & \ddots \\[0.2cm]
\end{bmatrix},
%
\end{aligned}
\end{equation}
where all the missing entries are either zeros or completed using the property of matrix symmetry and persymmetry. 
The softness bilinear form $s(\cdot, \cdot)$ leads to the matrix
\begin{equation} \label{eq:s4}
\begin{aligned}
S_4 & = 
\begin{bmatrix}
462 & -330 & 165 & -55 & 11 & -1 \\
-330 & 297 & -220 & 121 & -45 & 10 & -1 \\
165 & -220 & 253 & -210 & 120 & -45 & 10 & -1 \\
-55 & 121 & -210 & 252 & -210 & 120 & -45 & 10 & -1 \\
11 & -45 & 120 & -210 & 252 & -210 & 120 & -45 & 10 & -1 \\
 & \ddots & \ddots & \ddots & \ddots & \ddots & \ddots & \ddots & \ddots & \ddots\\
\end{bmatrix}_{N\times N}, \\
S_5 & = 
\begin{bmatrix}
429 & -572 & 429 & -208 & 65 & -12 & 1 \\
-572 & 858 & -780 & 494 & -220 & 66 & -12 & 1 \\
429 & -780 &  923 & -792 & 495 & -220 & 66 & -12 & 1 \\
-208 & 494 & -792 & 924 & -792 & 495 & -220 & 66 & -12 & 1 \\
 &  \ddots & \ddots & \ddots & \ddots & \ddots & \ddots & \ddots & \ddots & \ddots \\
\end{bmatrix}_{(N-1)\times (N-1)},
\end{aligned}
\end{equation}
where the entries near the right boundary are such that the matrix is symmetric and persymmetric.

\section{Several transformation matrices $T_p$} \label{app:tp}

For softIGA with $p=2,3,4,5$ on N uniform elements in $\Omega=[0,1]$, the transformation matrices are as follows. 

\begin{equation} \label{eq:t2t3}
T_2 = 
\begin{bmatrix}
 \frac{5}{8} & \frac{1}{8} \\[0.2cm]
\frac{1}{8} & \frac{3}{4} & \frac{1}{8} \\[0.2cm]
& \frac{1}{8} & \frac{3}{4} & \frac{1}{8} \\[0.2cm]
 &  & \ddots & \ddots & \ddots &  \\[0.2cm]
  & &  & \frac{1}{8} & \frac{3}{4} & \frac{1}{8}  \\[0.2cm]
&&& & \frac{1}{8} & \frac{5}{8}  \\
\end{bmatrix}_{N\times N}, \qquad
T_3 =
\begin{bmatrix}
 \frac{2}{3} & \frac{1}{6} \\[0.2cm]
\frac{1}{6} & \frac{2}{3} & \frac{1}{6} \\[0.2cm]
& \frac{1}{6} & \frac{2}{3} & \frac{1}{6} \\[0.2cm]
 &  & \ddots & \ddots & \ddots &  \\[0.2cm]
  & &  & \frac{1}{6} & \frac{2}{3} & \frac{1}{6}  \\[0.2cm]
&&& & \frac{1}{6} & \frac{2}{3}  \\
\end{bmatrix}_{(N-1)\times (N-1)}.
\end{equation} 

\begin{equation} \label{eq:t4t5}
T_4 =
\begin{bmatrix}
 \frac{77}{192} & \frac{25}{128} & \frac{1}{384} \\[0.2cm]
\frac{25}{128} & \frac{115}{192} & \frac{19}{96}  & \frac{1}{384} \\[0.2cm]
\frac{1}{384} & \frac{19}{96} & \frac{115}{192} & \frac{19}{96}  & \frac{1}{384} \\[0.2cm]
 &  \ddots & \ddots & \ddots & \ddots & \ddots  \\[0.2cm]
  & &  \frac{1}{384} & \frac{19}{96} & \frac{115}{192} & \frac{25}{128} \\[0.2cm]
&& &  \frac{1}{384} & \frac{25}{128} & \frac{77}{192}  \\
\end{bmatrix}_{N\times N}, 
T_5  =
\begin{bmatrix}
 \frac{13}{24} & \frac{13}{60} & \frac{1}{120} \\[0.2cm]
\frac{13}{60} & \frac{11}{20} & \frac{13}{60}  & \frac{1}{120} \\[0.2cm]
\frac{1}{120} & \frac{13}{60} & \frac{11}{20} & \frac{13}{60}  & \frac{1}{120} \\[0.2cm]
 &  \ddots & \ddots & \ddots & \ddots & \ddots  \\[0.2cm]
&& \frac{1}{120} & \frac{13}{60} & \frac{11}{20} & \frac{13}{60}  \\[0.2cm]
&&& \frac{1}{120} & \frac{13}{60} & \frac{13}{24}  \\[0.2cm]
\end{bmatrix}_{(N-1)\times (N-1)}.
\end{equation}

\end{document}